%Layout Komafont+Pagestyle
\documentclass[fontsize=12pt,a4paper,headings=normal,
twoside=false,leqno,parskip=half-,abstract=true]{scrartcl}
\usepackage[english]{babel}
\usepackage[utf8]{inputenc}
%\setkomafont{sectioning}{\mdseries}
\setlength{\textwidth}{15.5cm}
\setlength{\textheight}{23.5cm}
\usepackage{hyperref}
\hypersetup{
 %bookmarks=true,
 pdftitle={Sturm 3-ball global attractors 2: Design of Thom-Smale complexes},
 pdfauthor={Bernold Fiedler, Carlos Rocha},
 colorlinks=true,
 linkcolor=blue,
 citecolor=blue,
 filecolor=blue,
 urlcolor=blue}

%Grafiken
\usepackage{graphicx}
\usepackage[format=plain,labelfont=bf,font=small]{caption}
\usepackage{xcolor}
\usepackage[arrow, matrix, curve]{xy}

%Mathe
\usepackage{amsmath,amsthm}
\usepackage{amssymb} 
%for \pitchfork overlined
\newcommand{\transv}{\mathrel{\text{\tpitchfork}}}
\makeatletter
\newcommand{\tpitchfork}{%
  \vbox{
    \baselineskip\z@skip
    \lineskip-.52ex
    \lineskiplimit\maxdimen
    \m@th
    \ialign{##\crcr\hidewidth\smash{$-$}\hidewidth\crcr$\pitchfork$\crcr}
  }%
}
\makeatother
\usepackage{latexsym}
\usepackage[notref,notcite,color,final
%draft
]{showkeys}
\definecolor{refkey}{rgb}{1,0,0}
\definecolor{labelkey}{rgb}{1,0,0}
%\usepackage{refcheck} %FOR CROSS REFERENCING ONLY

%Notizen
\usepackage[textwidth=2cm,textsize=small,backgroundcolor=none]{todonotes}

%Makro v. Donald Arseneau Doppelpunkt im mathmode generell auf mathematische Achse zentriert
  \mathchardef\ordinarycolon\mathcode`\:
  \mathcode`\:=\string"8000
  \begingroup \catcode`\:=\active
    \gdef:{\mathrel{\mathop\ordinarycolon}}
  \endgroup

\theoremstyle{plain}
\newtheorem{thm}{Theorem}[section]
\newtheorem{lem}[thm]{Lemma}

\newtheorem{prop}[thm]{Proposition}
\newtheorem{cor}[thm]{Corollary}
\newtheorem{defi}[thm]{Definition}

%Trennung
\hyphenation{non neg ativ non zero non linear ity cur ves super critical ity homeo morphisms}

%%%%%%%%%%%%%%%%%%%%%%%%%%%%%%%%%%%%%%%%%%%%%%%%%%%%%%%%%%%

\begin{document}

\title{{\LARGE{Sturm 3-ball global attractors 2:\\Design of Thom-Smale complexes}}}
%{\subtitle{	
%	\vspace{1ex}
%	{\large -- Dedicated to 
%	Waldyr M. Oliva, mentor and friend  --}}\vspace{1ex}
%	}

\author{
 \\
%\emph{-- Dedicated to 	Waldyr M. Oliva, mentor and friend  --}\\
{~}\\
Bernold Fiedler* and Carlos Rocha**\\
\vspace{2cm}}

\date{version of \today}
\maketitle
\thispagestyle{empty}

\vfill

*\\
Institut für Mathematik\\
Freie Universität Berlin\\
Arnimallee 3\\ 
14195 Berlin, Germany\\
\\
**\\
Center for Mathematical Analysis, Geometry and Dynamical Systems\\
Instituto Superior T\'ecnico\\
Universidade de Lisboa\\
Avenida Rovisco Pais\\
1049--001 Lisbon, Portugal\\

%%%%%%%%%%%%%%%%%%%%%%%%%%%%%%%%%%%%%%%%%%%%%%%%%%%%%%%%%%%

\newpage
\pagestyle{plain}
\pagenumbering{roman}
\setcounter{page}{1}

\begin{abstract}
This is the second of three papers on the geometric and combinatorial characterization of global Sturm attractors which consist of a single closed 3-ball.
The underlying scalar PDE is parabolic,
$$ u_t = u_{xx} + f(x,u,u_x)\,, $$
on the unit interval $0 < x<1$ with Neumann boundary conditions.
Equilibria are assumed to be hyperbolic.\\
\newline
Geometrically, we study the resulting Thom-Smale dynamic complex with cells defined by the fast unstable manifolds of the equilibria.
The Thom-Smale complex turns out to be a regular cell complex.
Our geometric description involves a bipolar orientation of the 1-skeleton, a hemisphere decomposition of the boundary 2-sphere by two polar meridians, and a meridian overlap of certain 2-cell faces in opposite hemispheres.\\
\newline
The combinatorial description is in terms of the Sturm permutation, alias the meander properties of the shooting curve for the equilibrium ODE boundary value problem.
It involves the relative positioning of extreme 2-dimensionally unstable equilibria at the Neumann boundaries $x=0$ and $x=1$, respectively, and the overlapping reach of polar serpents in the shooting meander.\\
\newline
In the first paper we showed the implications
$$ \text{Sturm attractor}\quad \Longrightarrow \quad \text{Thom-Smale complex} \quad \Longrightarrow \quad \text{meander}\,.$$
The present part 2, closes the cycle of equivalences by the implication
$$ \text{meander} \quad \Longrightarrow \quad \text{Sturm attractor}\,.$$
In particular this cycle allows us to construct a unique Sturm 3-ball attractor for any prescribed Thom-Smale complex which satisfies the geometric properties of the bipolar orientation and the hemisphere decomposition.
Many explicit examples and illustrations will be discussed in part 3.
The present 3-ball trilogy, however, is just another step towards the still elusive geometric and combinational characterization of all Sturm global attractors in arbitrary dimensions.

\end{abstract}

\newpage

\tableofcontents

%%%%%%%%%%%%%%%%%%%%%%%%%%%%%%%%%%%%%%%%%%%%%%%%%%%%%%%%%%%

\newpage
\pagenumbering{arabic}
\setcounter{page}{1}

\section{Introduction}
\label{sec1}

\numberwithin{equation}{section}
\numberwithin{figure}{section}

For a general introduction we first follow \cite{firo3d-1} and the references there.
\emph{Sturm global attractors} $\mathcal{A}_f$ are the global attractors of scalar parabolic equations
	\begin{equation}
	u_t = u_{xx} + f(x,u,u_x)
	\label{eq:1.1}
	\end{equation}
on the unit interval $0<x<1$.
Just to be specific we consider Neumann boundary conditions $u_x=0$ at $x=0,1$.
Standard semigroup theory provides local solutions $u(t,x)$ for $t \geq 0$ and given initial data at time $t=0$, in suitable Sobolev spaces $u(t, \cdot) \in X \subseteq C^1 ([0,1], \mathbb{R})$.
Under suitable dissipativeness assumptions on $f \in C^2$, any solution eventually enters a fixed large ball in $X$.
In fact that large ball of initial conditions itself limits onto the maximal compact and invariant subset $\mathcal{A}_f$ which is called the global attractor. See \cite{he81, pa83, ta79} for a general PDE background, and \cite{bavi92, chvi02, edetal94, ha88, haetal02, la91, ra02, seyo02, te88} for global attractors in general.

Equilibria $v = v(x)$ are time-independent solutions, of course, and hence satisfy the ODE
	\begin{equation}
	0 = v_{xx} + f(x,v,v_x)
	\label{eq:1.3}
	\end{equation} 
for $0\leq x \leq 1$, again with Neumann~boundary.
Here and below we assume that all equilibria $v$ of \eqref{eq:1.1}, \eqref{eq:1.3} are \emph{hyperbolic}, i.e. without eigenvalues (of) zero (real part) of their linearization.
Let $\mathcal{E} = \mathcal{E}_f \subseteq \mathcal{A}_f$ denote the set of equilibria.
Our generic hyperbolicity assumption and dissipativeness of $f$ imply that $N$:= $|\mathcal{E}_f|$ is odd.

It is known that \eqref{eq:1.1} possesses a Lyapunov~function, alias a variational or gradient-like structure, under separated boundary conditions;  see \cite{ze68, ma78, mana97, hu11, fietal14}. In particular, the global attractor consists of equilibria and of solutions $u(t, \cdot )$, $t \in \mathbb{R}$, with forward and backward limits, i.e.
	\begin{equation}
	\underset{t \rightarrow -\infty}{\mathrm{lim}} u(t, \cdot ) = v\,,
	\qquad
	\underset{t \rightarrow +\infty}{\mathrm{lim}} u(t, \cdot ) = w\,.
	\label{eq:1.2}
	\end{equation}
In other words, the $\alpha$- and $\omega$-limit sets of $u(t,\cdot )$ are two distinct equilibria $v$ and $w$.
We call $u(t, \cdot )$ a \emph{heteroclinic} or \emph{connecting} orbit, or \emph{instanton},  and write $v \leadsto w$ for such heteroclinically connected equilibria.

We attach the name of \emph{Sturm} to the PDE \eqref{eq:1.1}, and to its global attractor $\mathcal{A}_f$ because of a crucial nodal property of its solutions which we express by the \emph{zero number} $z$.
Let $0 \leq z (\varphi) \leq \infty$ count the number of (strict) sign changes of $\varphi : [0,1] \rightarrow \mathbb{R}, \, \varphi \not\equiv 0$.
Then
	\begin{equation}
	t \quad \longmapsto \quad z(u^1(t, \cdot ) - u^2(t, \cdot ))\,
	\label{eq:1.4}
	\end{equation}
is finite and nonincreasing with time $t$, for $t>0$ and any two distinct solutions $u^1$, $u^2$ of \eqref{eq:1.1}.
Moreover $z$ drops strictly with increasing $t$, at any multiple zero of $x \longmapsto u^1(t_0 ,x) - u^2(t_0 ,x)$; see \cite{an88}.
See Sturm \cite{st1836} for a linear autonomous version. For a first introduction see also \cite{ma82, brfi88, fuol88, mp88, brfi89, ro91, fisc03, ga04} and the many references there.

\begin{figure}[t!]
\centering \includegraphics[width=\textwidth]{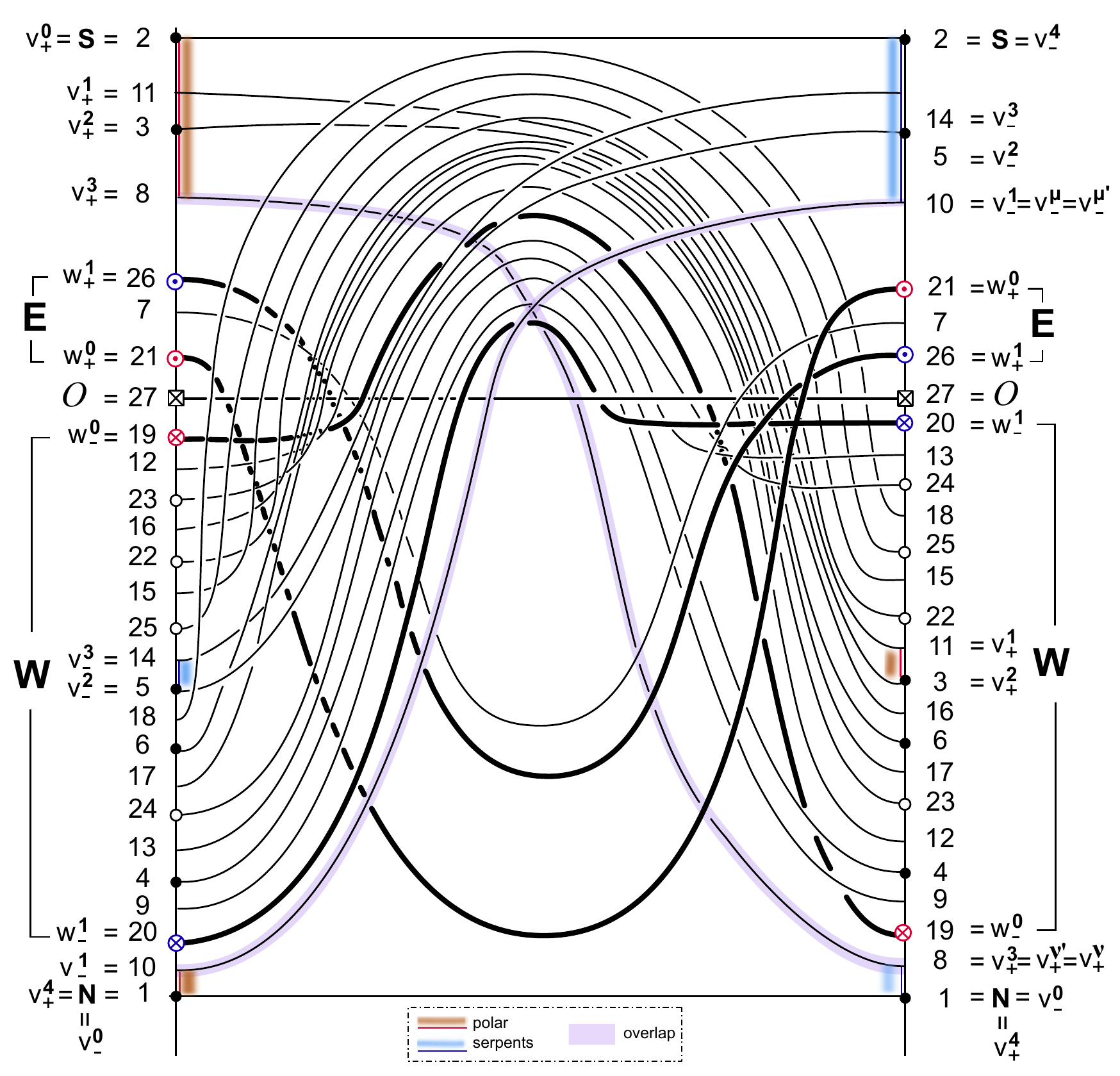}
\caption{\emph{
A sketch of the $27$ spatial profiles $v(x)$, for all equilibria in a solid Sturm octahedron $\mathcal{A}_f$.
The equilibria $1, \ldots , 27$ are ordered by $h_0^f$, $h_1^f$ along the left, right vertical axis $x=0,\, 1$, respectively.
Dots $\bullet$ indicate stable sink equilibria $1, \ldots , 6$ with Morse index $i=0$.
Circles $\circ$ indicate $8$ source equilibria $19, \ldots , 26$ with unstable dimension $i=2$.
The $i=3$ central equilibrium of the solid octahedron is $\mathcal{O}=27$.
The remaining equilibria $7, \ldots , 18$ indicate $i=1$ saddles.
See also figs.~\ref{fig:1.2} and \ref{fig:1.3}. For the notations $v_{\pm}^j,\ w_\pm^\iota$ see also fig.~\ref{fig:1.5}.
}}
\label{fig:1.1}
\end{figure}

The dynamic consequences of the Sturm structure are enormous.
In a series of papers, we have given a combinatorial description of Sturm global attractors $\mathcal{A}_f$; see \cite{firo96, firo99, firo00}.
Define the two \emph{labeling bijections} $h^f_0, h^f_1$: $\lbrace 1, \ldots, N \rbrace \rightarrow \mathcal{E}_f$ of the equilibria such that
	\begin{equation}
	h^f_\iota (1) < h^f_\iota (2) < \ldots < h^f_\iota (N) \qquad \text{at}
	\qquad x=\iota = 0,1\,.
	\label{eq:1.5}
	\end{equation}
See figs.~\ref{fig:1.1} and \ref{fig:5.1} for specific examples.
	
Our combinatorial description is based on the \emph{Sturm~permutation} $\sigma_f \in S_N$ which was introduced by Fusco and Rocha in \cite{furo91} and is defined as
	\begin{equation}
	\sigma_f:= (h^f_0)^{-1} \circ h^f_1\,.
	\label{eq:1.6}
	\end{equation}
Using a shooting approach to the ODE boundary value problem \eqref{eq:1.3}, the Sturm~permutations $\sigma_f \in S_N$ have been characterized as \emph{dissipative Morse meanders} in \cite{firo99}; see also \eqref{eq:1.19a}--\eqref{eq:1.22b} below for details.
In \cite{firo96} we have shown how to determine which equilibria $v$, $w$ possess a heteroclinic orbit connection \eqref{eq:1.2}, explicitly and purely combinatorially from $\sigma_f$.
A remaining puzzle were different, and even nonconjugate, Sturm permutations which still give rise to $C^0$ orbit-equivalent Sturm attractors; see also \cite[fig.~5.2]{firo3d-1}.
We will address this puzzle in theorem~\ref{thm:2.7} below.

Already at this elementary level, let us mention the four \emph{trivial equivalences} generated by the two commuting involutions $x \mapsto 1-x$ and $u \mapsto -u$; see \cite[definition 2.3]{firo3d-1}. Evidently, the first involution interchanges $h_0$ with $h_1$, and hence replaces the Sturm permutation $\sigma=h_0^{-1} \circ h_1$ by its inverse $\sigma^{-1}$. The second involution reverses the direction of the boundary orders $h_0, h_1$. This replaces $\sigma$ by its conjugate $\kappa \sigma \kappa$ under the flip $\kappa(j):=N+1-j$. Trivially, trivial equivalences give rise to trivially $C^0$ orbit-equivalent Sturm attractors. It is the remaining nontrivial equivalences, most of all, which theorem~\ref{thm:2.7} aims at.

\begin{figure}[t!]
\centering \includegraphics[width=\textwidth]{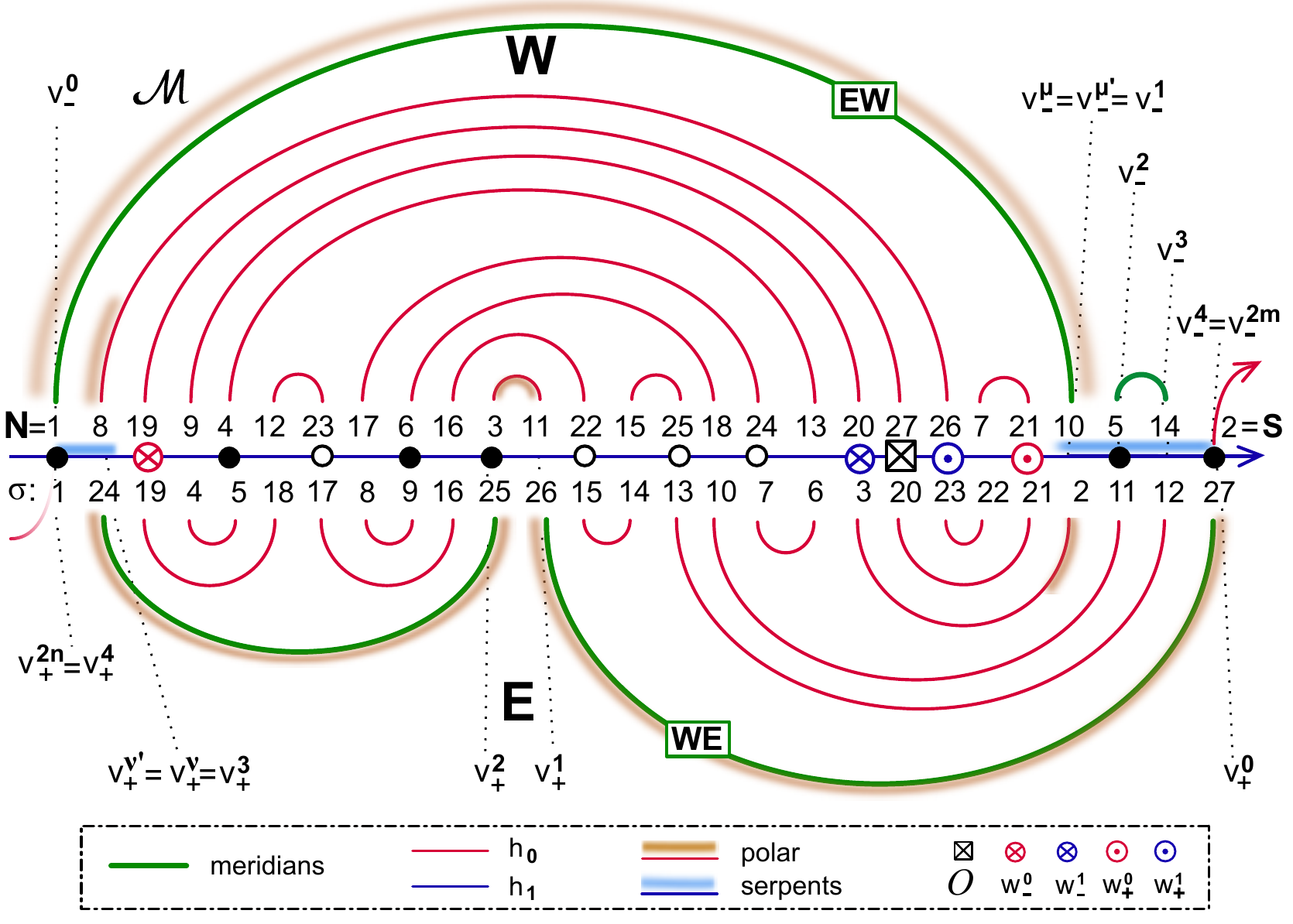}
\caption{\emph{
The Sturm meander $\mathcal{M}_f$ of the solid octahedron $\mathcal{A}_f$ of fig.~\ref{fig:1.1}.
Equilibrium labels above the horizontal $h_1$-axis, and Sturm permutation $\sigma_f = (h_0^f)^{-1} \circ h_1^f$ below.
Note how the shooting curve $h_0^f$ and the horizontal axis $h_1^f$ follow the equilibrium labels according to their enumerations in fig.~\ref{fig:1.2}.
Note consistency of all Morse numbers $i_v$ with Morse indices $i(v)$, for all equilibria $v$, according to fig.~\ref{fig:1.1}.
}}
\label{fig:1.2}
\end{figure}

For an explicit example of a Sturm permutation $\sigma_f$ which defines a solid octahedral Sturm global attractor $\mathcal{A}_f$ see figs.~\ref{fig:1.1} -- \ref{fig:1.3} and \cite[section~6]{firo3d-1}.
Fig.~\ref{fig:1.1} sketches the spatial profiles $v = v(x)$ for the $N =27$ equilibria $v \in \mathcal{E}_f$.
The boundary label maps $h_\iota^f$ are, specifically,
\begin{equation}
	\begin{aligned}
	h_0: \,\, 1\;\; &\text{10 20 9 4 13 24 17 6 18 5 14 25 15
	 22 16 23 12 19 27 21 7 26 8 3 11 2}\,;\\
	h_1: \,\, 1\;\; &\text{8  19 9  4  12 23 17 6  16 3  11 22 15
	25 18 24 13 20 27 26 7  21 10 5  14 2}\,.
	\end{aligned}
	\label{eq:1.5a}
	\end{equation}
Fig.~\ref{fig:1.2} depicts a stylized \emph{shooting meander} $\mathcal{M}_f$ associated to the octahedral Sturm permutation $\sigma_f$ which results from the boundary labels $h_\iota^f$ of the equilibria $v \in \mathcal{E}_f$ at $x = \iota = 0,1$, in ascending order.
By \eqref{eq:1.6} and \eqref{eq:1.5a},
	\begin{equation}
	\begin{aligned}
	\sigma 
	&=  \lbrace 1,\text{24, 19, 4, 5, 18, 17, 8, 9, 16, 25, 26, 15, 14,}\\
	&\phantom{=\lbrace 1,} \text{ 13, 10, 7, 6, 3, 20, 23, 22, 21, 2, 11, 12, 27}\rbrace =\\
	&= \text{(2 24) (3 19) (6 18) (7 17) (10 16) (11 25) (12 26) (13 15) (21 23)}\,.
	\end{aligned}
	\label{eq:1.6a}
	\end{equation}
Indeed, the $(v,v_x)$ phase plane of ODE \eqref{eq:1.3} at $x=1$ features the horizontal $v$-axis with equilibrium order $h_1^f$, as a Neumann boundary condition.
The meander curve $\mathcal{M}_f$ is the image, at $x=1$, which results, by shooting, from the Neumann initial conditions at $x=1$.
Hence the intersections of $\mathcal{M}_f$ with the horizontal axis represent the equilibrium set $\mathcal{E}_f$.
The ascending labeling $h_0^f$ of equilibria, at $x=0$, is the ordering of these intersections along $\mathcal{M}_f$.
The ascending labeling $h_1^f$ of equilibria, at $x=1$, is the ordering of these same intersections along the horizontal axis.

In fact it is the Sturm property of \eqref{eq:1.4} which implies the Morse-Smale property, for hyperbolic equilibria.
Indeed unstable and stable manifolds $W^u(v)$, $W^s(w)$, which intersect precisely along heteroclinic orbits $v \leadsto w$, are automatically transverse:
$W^u(v) \transv W^s(w)$.
See \cite{he85, an86}.
In the Morse-Smale setting, Henry already observed, that a heteroclinic orbit $v \leadsto w$ is equivalent to $w$ belonging to the boundary $\partial W^u(v)$ of the unstable manifold $W^u(v_-)$; see \cite{he85}.

More geometrically, global Sturm attractors $\mathcal{A}_f$ and $\mathcal{A}_g$ with the same Sturm permutation $\sigma_f = \sigma_g$ are $C^0$ orbit-equivalent \cite{firo00}.
Only for $C^1$-small perturbations, from $f$ to $g$, this global fact follows from $C^0$ structural stability of Morse-Smale systems; see e.g. \cite{pasm70} and \cite{pame82}.

For planar Sturm~attractors $\mathcal{A}_f$, i.e. for equilibrium sets $\mathcal{E}_f$ with a maximal Morse index two \cite{br90, jo89, ro91}, a slightly more geometric approach had been initiated in the planar Sturm trilogy \cite{firo08, firo09, firo10}.
It was clarified which planar graphs $\mathcal{H}$ do arise as connection graphs $\mathcal{H}=\mathcal{H}_f$ of planar Sturm attractors $\mathcal{A}_f$, and which ones do not.
Meanwhile, a \emph{Schoenflies~theorem} has also been proved to hold for the closure $\overline{W}^u(v) \subseteq X$ of the unstable manifold $W^u$ of any hyperbolic equilibrium $v$; see \cite{firo13}.
In particular $\overline{W}^u(v)$ is the homeomorphic Euclidean embedding of a closed unit ball $\overline{B}^{i(v)}$ of dimension $i(v)$.
In \cite{firo14} this allowed us to reformulate the combinatorial results of \cite{firo08, firo09, firo10}, in a more geometric and topological language, as follows.

We consider \emph{finite~regular} CW-\emph{complexes}
	\begin{equation}
	\mathcal{C} = \bigcup\limits_{v\in \mathcal{E}} c_v\,,
	\label{eq:1.7}
	\end{equation}
i.e. finite disjoint unions of \emph{cell~interiors} $c_v$ with additional gluing properties.
We think of the labels $v\in \mathcal{E}$ as \emph{barycenter} elements of $c_v$.
For CW-complexes we require the closures $\overline{c}_v$ in $\mathcal{C}$ to be the continuous images of closed unit balls $\overline{B}_v$ under \emph{characteristic maps}.
We call $\mathrm{dim}\,\overline{B}_v$ the dimension of the (open) cell $c_v$. 
For positive dimensions of $\overline{B}_v$ we require $c_v$ to be the homeomorphic images of the interiors $B_v$. 
For dimension zero we write $B_v := \overline{B}_v$ so that any 0-cell $c_v= B_v$ is just a point.
The \emph{m-skeleton} $\mathcal{C}^m$ of $\mathcal{C}$ consists of all cells of dimension at most $m$.
We require $\partial c_v := \overline{c}_v \setminus c_v \subseteq \mathcal{C}^{m-1}$ for any $m$-cell $c_v$.
Thus, the boundary $(m-1)$-sphere $S_v := \partial B_v = \overline{B}_v \setminus B_v$ of any $m$-ball  $B_v$, $m>0$, maps into the $(m-1)$-skeleton,
	\begin{equation}
	\partial B_v \quad \longrightarrow \quad \partial c_v \subseteq \mathcal{C}^{m-1}\,,
	\label{eq:1.8}
	\end{equation}
for the $m$-cell $c_v$, by restriction of the continuous characteristic map.
The map \eqref{eq:1.8} is called the \emph{attaching} (or \emph{gluing}) \emph{map}.
For \emph{regular} CW-complexes, in contrast, the characteristic maps $ \overline{B}_v  \rightarrow  \overline{c}_v $ are required to be homeomorphisms, up to and including the \emph{attaching} (or \emph{gluing}) \emph{homeomorphism}. We moreover require $\partial{c_v}$  to be a sub-complex of $\mathcal{C}^{m-1}$, then. 
See \cite{frpi90} for a background on this terminology.

\begin{figure}[t!]
\centering \includegraphics[width=\textwidth]{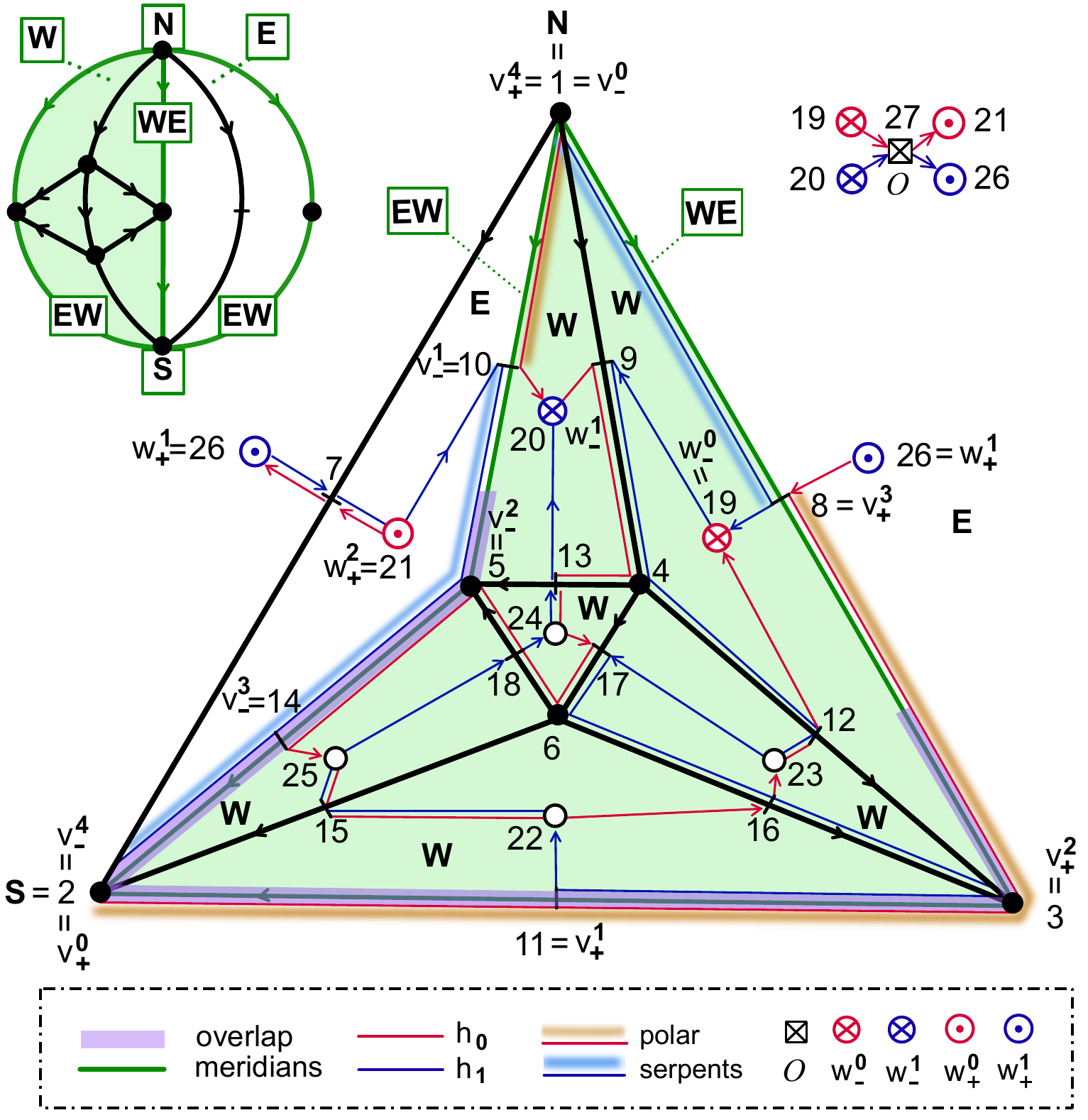}
\caption{\emph{
Sketch of the labeling bijections $(h_0^f,h_1^f)$ in the bipolar dynamic 3-cell complex of the solid octahedron $\mathcal{A}_f$ of fig.~\ref{fig:1.1}.
The backwards face $1\,2\,3$ of the octahedron is exterior.
Note the hemisphere decomposition $\mathbf{W}$, $\mathbf{E}$ by the two meridians $\mathbf{EW}$ and $\mathbf{WE}$ from pole $\mathbf{N}$ to pole $\mathbf{S}$.
The paths $h_\iota^f$ with respect to equilibrium labels in the figure are listed in \eqref{eq:1.5a}.
See also \eqref{eq:1.6a} for the resulting Sturm permutation $\sigma = h_0^{-1} \circ h_1$, and fig.~\ref{fig:1.2} for the resulting Sturm meander.
}}
\label{fig:1.3}
\end{figure}

\begin{figure}[t!]
\centering \includegraphics[width=\textwidth]{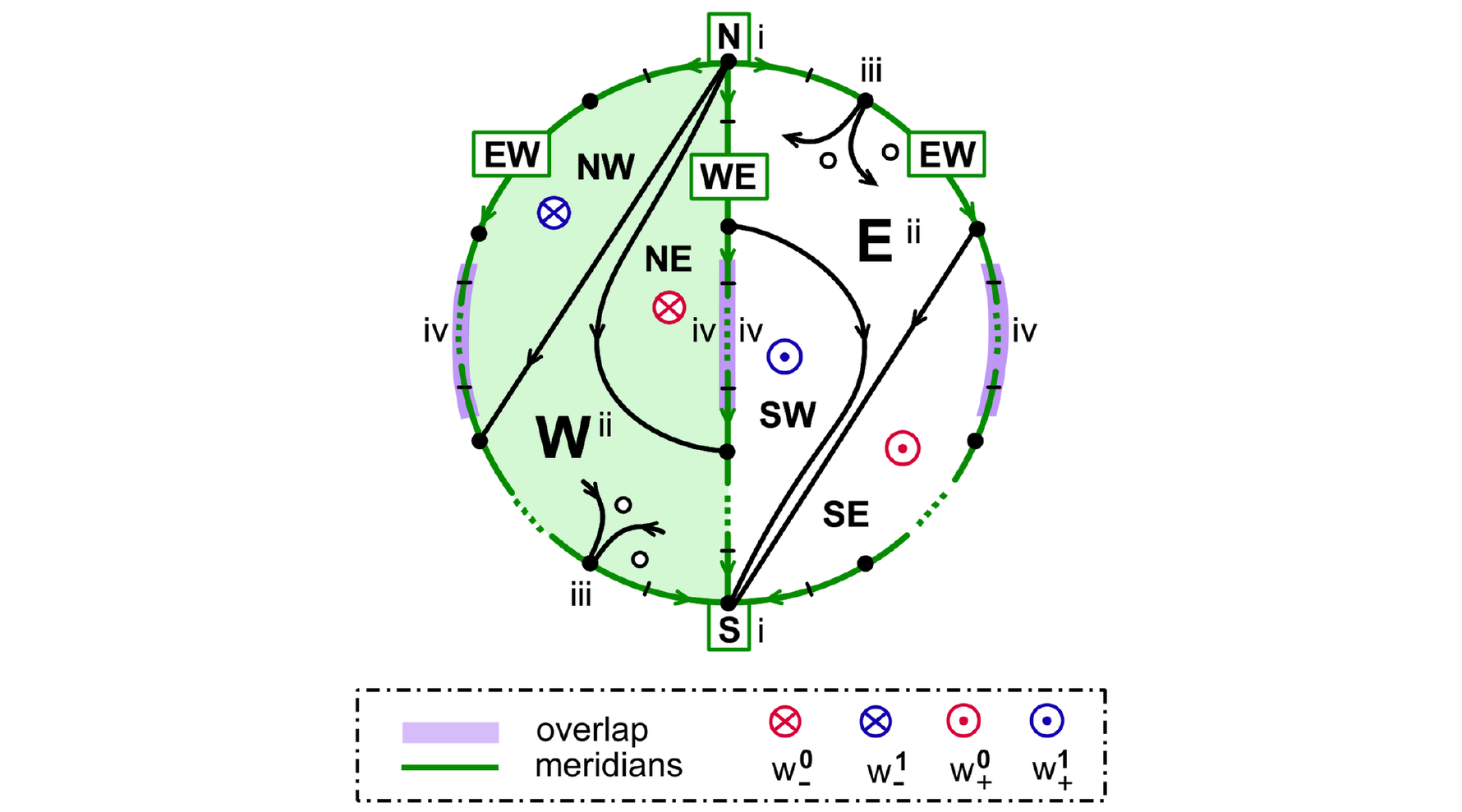}
\caption{\emph{
A 3-cell template. Shown is the $S^2$ boundary of the single 3-cell $c_\mathcal{O}$ with poles $\mathbf{N}$, $\mathbf{S}$, hemispheres $\mathbf{W}$ (green), $\mathbf{E}$ and separating meridians $\mathbf{EW}$, $\mathbf{WE}$ (green).
The right and the left boundaries denote the same $\mathbf{EW}$ meridian and have to be identified.
Dots $\bullet$ are sinks, and small circles $\circ$ are sources.
Note the hemisphere decomposition~(ii), the edge orientations~(iii) at meridian boundaries, and the meridian overlaps~(iv) of the $\mathbf{N}$-adjacent meridian faces $\otimes = w_-^\iota$ with their $\mathbf{S}$-adjacent counterparts $\odot =w_+^\iota$.
For $w_\pm^\iota$ see also \eqref{eq:1.19}.
For a specific octahedron example see fig.~\ref{fig:1.3}.
}}
\label{fig:1.4}
\end{figure}

The disjoint dynamic decomposition
	\begin{equation}
	\mathcal{A}_f = \bigcup\limits_{v \in \mathcal{E}_f} W^u(v)
	\label{eq:1.9}
	\end{equation}
of the global attractor $\mathcal{A}_f$ into unstable manifolds $W^u$ of equilibria $v$ is called the \emph{Thom-Smale complex} or \emph{dynamic complex}; see for example \cite{fr79, bo88, bizh92}.
In our Sturm setting \eqref{eq:1.1} with hyperbolic equilibria $v \in \mathcal{E}_f$, the Thom-Smale complex is a finite regular CW-complex.
The open cells $c_v$ are the unstable manifolds $W^ u (v)$ of the equilibria $v \in \mathcal{E}_f$.
The proof is closely related to the Schoenflies result of \cite{firo13}; see \cite{firo14}.
We can therefore define the \emph{Sturm~complex} $\mathcal{C}_f$ to be the regular Thom-Smale dynamic complex 
	\begin{equation}
	\mathcal{C}_f := \bigcup_{v \in \mathcal{E}_f}\, W^u(v)
	\label{eq:1.7a}
	\end{equation}
of the Sturm global attractor $\mathcal{A}_f$, provided all equilibria $v \in \mathcal{E}_f$ are hyperbolic.
Again we call the equilibrium $v \in \mathcal{E}_f$ the \emph{barycenter} of the cell $c_v=W^u(v)$.

A planar Sturm complex $\mathcal{C}_f$, for example, is the Thom-Smale complex of a planar Sturm global attractor $\mathcal{A}_f$ for which all equilibria $v \in \mathcal{E}_f$ have Morse indices $i(v) \leq 2$.
See section~\ref{sec3} for a detailed discussion, based on our planar Sturm trilogy \cite{firo08, firo09, firo10}.
See fig.~\ref{fig:1.3} for the Sturm complex of the solid octahedron attractor $\mathcal{A}_f$ defined by the Sturm permutation $\sigma_f$ of \eqref{eq:1.6a} and figs.~\ref{fig:1.1}, \ref{fig:1.2}.

Our main objective, in the present trilogy of papers, is a geometric and combinatorial characterization of those global Sturm attractors, which are the closure
	\begin{equation}
	\mathcal{A}_f = \text{clos } W^u (\mathcal{O})
	\label{eq:1.10}
	\end{equation}
of the unstable manifold $W^u$ of a single equilibrium $v = \mathcal{O}$ with Morse index $i(\mathcal{O}) =3$.
We call such an $\mathcal{A}_f$ a 3-\emph{ball Sturm attractor}.
Recall that we assume all equilibria $v \in \mathcal{E}_f$ to be hyperbolic:
\emph{sinks} have Morse index $i=0$, \emph{saddles} have $i=1$, and \emph{sources}  $i=2$.
This terminology also applies when viewed within the flow-invariant and attracting boundary 2-sphere
	\begin{equation}
	\Sigma^2 = \partial W^u(\mathcal{O}):= \left(
	\text{clos } W^u(\mathcal{O})\right) \smallsetminus
	W^u (\mathcal{O})\,.
	\label{eq:1.11}
	\end{equation}
Correspondingly we call the associated cells $c_v = W^u(v)$ of the dynamic cell complex, or of any regular cell complex, \emph{vertices}, \emph{edges}, and {\emph{faces}.
The graph of vertices and edges, for example, defines the 1-skeleton $\mathcal{C}^1$ of the 3-ball cell complex $\mathcal{C} = \bigcup_v \, c_v$.

For a geometric characterization of 3-ball Sturm attractors $\mathcal{A}_f$ in \eqref{eq:1.10}, by their dynamic complexes \eqref{eq:1.9}, we now drop all Sturmian PDE interpretations.
Instead we define 3-cell templates, abstractly, in the class of regular cell complexes and without any reference to PDE or dynamics terminology.
See fig.~\ref{fig:1.4} for an illustration.

\begin{defi}\label{def:1.1}
A finite disjoint union $\mathcal{C} = \bigcup_{v \in \mathcal{E}} c_v$ of cells $c_v$ is called a \emph{3-cell template} if $\mathcal{C}$ is a regular cell complex and the following four conditions all hold.
\begin{itemize}
\item[(i)] $\mathcal{C} = \text{clos } c_{\mathcal{O}}= S^2 \,\dot{\cup}\, c_{\mathcal{O}}$ is the closure of a single 3-cell $c_{\mathcal{O}}$.
\item[(ii)] The 1-skeleton $\mathcal{C}^1$ of $\mathcal{C}$ possesses a \emph{bipolar orientation} from a pole vertex $\mathbf{N}$ (North) to a pole vertex $\mathbf{S}$ (South), with two disjoint directed \emph{meridian paths} $\mathbf{WE}$ and $\mathbf{EW}$ from $\mathbf{N}$ to $\mathbf{S}$.
The circle of meridians decomposes the boundary sphere $S^2$ into remaining \emph{hemisphere} components $\mathbf{W}$ (West) and $\mathbf{E}$ (East), both open in $S^2$.
\item[(iii)] Edges are oriented towards the meridians, in $\mathbf{W}$, and away from the meridians, in $\mathbf{E}$, at end points on the meridians other than the poles $\mathbf{N}$, $\mathbf{S}$.
\item[(iv)] Let $\mathbf{NE}$, $\mathbf{SW}$ denote the unique faces in $\mathbf{W}$, $\mathbf{E}$, respectively, which contain the first, last edge of the meridian $\mathbf{WE}$ in their boundary.
Then the boundaries of $\mathbf{NE}$ and $\mathbf{SW}$ overlap in at least one shared edge of the meridian $\mathbf{WE}$.

Similarly, let $\mathbf{NW}$, $\mathbf{SE}$ denote the unique faces in $\mathbf{W}$, $\mathbf{E}$, adjacent to the first, last edge of the other meridian $\mathbf{EW}$, respectively.
Then their boundaries overlap in at least one shared edge of $\mathbf{EW}$.
\end{itemize}
\end{defi}

We recall here that an edge orientation of the 1-skeleton $\mathcal{C}^1$ is called bipolar if it is without directed cycles, and with a single ``source'' vertex $\mathbf{N}$ and a single ``sink'' vertex $\mathbf{S}$ on the boundary of $\mathcal{C}$.
Here ``source'' and ``sink'' are understood, not dynamically but, with respect to edge orientation.
To avoid any confusion with dynamic $i=0$ sinks and $i=2$ sources, below, we call $\mathbf{N}$ and $\mathbf{S}$ the North and South pole, respectively.

With the above notation and definition we can now formulate the main result of the present paper.

\begin{thm}\label{thm:1.2}
Let $\mathcal{C} = \bigcup_{v \in \mathcal{E}} c_v$ be a finite disjoint union of cells.
Then $\mathcal{C} = \mathcal{C}_f$ is the Thom-Smale dynamic cell complex of a 3-ball Sturm attractor $\mathcal{A}_f$ if, and only if, $\mathcal{C}$ is a 3-cell template.
More precisely, there exists a cell-preserving homeomorphism
	\begin{equation}
	\Phi: \qquad \mathcal{C} = \bigcup\limits_{v \in \mathcal{E}} c_v
	\quad \longrightarrow \quad 
	\bigcup\limits_{v \in \mathcal{E}_f} W^u(v)
	= \mathcal{C}_f = \mathcal{A}_f
	\label{eq:hom}
	\end{equation}
with $\Phi (c_v) = W^u\left(\Phi(v)\right)$.
\end{thm}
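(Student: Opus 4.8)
The equivalence splits into two implications. \emph{The ``only if'' direction} — that the Thom-Smale complex $\mathcal{C}_f$ of any 3-ball Sturm attractor is a 3-cell template — is the content of the first part \cite{firo3d-1}, and I only recall the underlying dictionary: the bipolar orientation of $\mathcal{C}_f^1$ is the one induced by the boundary order $h_0^f$ (equivalently by the zero-number ordering of $\mathcal{E}_f$), the poles $\mathbf{N},\mathbf{S}$ being the two sinks that are extremal for \emph{both} $h_0^f$ and $h_1^f$, and the two meridians being the $h_0^f$-monotone boundary paths inside $\Sigma^2$ from $\mathbf{N}$ to $\mathbf{S}$; conditions (ii)--(iv) then record the zero-number and Morse bookkeeping on $\Sigma^2$ established in \cite{firo96,firo3d-1}. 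So the real work is the converse: given an abstract regular cell complex $\mathcal{C}$ satisfying (i)--(iv), manufacture an $f$ with $\mathcal{C}_f\cong\mathcal{C}$.

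\emph{Step 1: read a candidate Sturm permutation off $\mathcal{C}$.} The bipolar orientation of $\mathcal{C}^1$ is an $st$-orientation of a planar graph; I use the resulting planar embedding — in which incoming and outgoing edges are separately consecutive at each vertex and the meridian circle $\mathbf{WE}\cup\mathbf{EW}$ bounds the two hemispheres $\mathbf{W},\mathbf{E}$ — to place the barycenters of \emph{all} cells $c_v$ (dimensions $0,1,2$ and the single $3$-cell $c_\mathcal{O}$) into two bijections $h_0,h_1\colon\{1,\dots,N\}\to\mathcal{E}$, with $N=|\mathcal{E}|$. The rule is the canonical meander sweep: traverse the West hemisphere ``towards the meridians'' and the East hemisphere ``away from the meridians'' in the sense prescribed by (iii), interleaving the two hemispheres along the shared meridian, and inserting each face barycenter immediately after the initial arc of its boundary has been passed; condition (iv) is exactly what makes the two distinguished meridian faces straddle $\mathbf{N}$ and $\mathbf{S}$ with the correct nesting. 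Put $\sigma:=h_0^{-1}\circ h_1\in S_N$.

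\emph{Step 2: $\sigma$ is a dissipative Morse meander.} Connectedness of $\mathcal{C}$ and the ball property (i) make the upper/lower arc configuration of $\sigma$ a single closed curve, so $\sigma$ is a meander; the poles occupy position $1$ and position $N$ in both $h_0$ and $h_1$ by construction, so $\sigma(1)=1$, $\sigma(N)=N$, i.e.\ $\sigma$ is dissipative. The heart of the matter is the \emph{Morse} property: the formal Morse numbers $i_k$ generated from $\sigma$ by the recursion of \cite{firo99} must be nonnegative. I claim the stronger statement $i_k=\dim c_{h_0(k)}$ for all $k$, proved by induction along the sweep of Step 1 while tracking the change of zero number between consecutive barycenters; each step is a single meander crossing, and conditions (ii)--(iv) are precisely what forces $i_k$ to stay in $\{0,1,2,3\}$ and to return to $0$ at every sink. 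I expect this to be the main obstacle, since the recursion looks global and negativity must be ruled out everywhere; it is also where conditions (iii) on meridian edge orientations and (iv) on meridian face overlap, otherwise seemingly ad hoc, earn their keep.

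\emph{Step 3: realize and identify.} By the characterization and realization of dissipative Morse meanders \cite{furo91,firo99}, there is an $f\in C^2$ with all equilibria of \eqref{eq:1.1} hyperbolic and $\sigma_f=\sigma$. By Step 2, $i(\mathcal{O})=3$ and $\mathcal{O}$ is the unique equilibrium of maximal Morse index, so $\mathcal{A}_f=\mathrm{clos}\,W^u(\mathcal{O})$ is a 3-ball Sturm attractor. The combinatorial results of \cite{firo96,firo3d-1} determine the regular complex $\mathcal{C}_f$ — its cells, dimensions, incidences and induced bipolar and hemisphere data — from $\sigma_f$ alone; since the sweep of Step 1 was designed to invert that reconstruction, $\mathcal{C}_f\cong\mathcal{C}$ as regular cell complexes, compatibly with the hemisphere decomposition. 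Finally, to upgrade this combinatorial isomorphism to the cell-preserving homeomorphism $\Phi$ of \eqref{eq:hom}, I build $\Phi$ skeleton by skeleton: regularity of the Thom-Smale complex \cite{firo14} together with the Schoenflies theorem \cite{firo13} supplies, for each cell, a homeomorphism of closed balls extending the already-constructed boundary identification, and these patch to the desired $\Phi$ with $\Phi(c_v)=W^u(\Phi(v))$.
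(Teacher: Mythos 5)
There is a genuine gap, and it sits exactly where the main new work of this paper lies. In your Step 3 you infer: ``By Step 2, $i(\mathcal{O})=3$ and $\mathcal{O}$ is the unique equilibrium of maximal Morse index, so $\mathcal{A}_f=\mathrm{clos}\,W^u(\mathcal{O})$ is a 3-ball Sturm attractor.'' That inference is false as stated. The global attractor is the union $\bigcup_v W^u(v)$ of \emph{all} unstable manifolds, and it equals $\mathrm{clos}\,W^u(\mathcal{O})$ only if every other equilibrium $v$ satisfies $\mathcal{O}\leadsto v$, i.e.\ lies in $\partial W^u(\mathcal{O})$. Uniqueness of the top Morse index does not give this: heteroclinic connections in Sturm attractors can be \emph{blocked} by intermediate equilibria with the wrong signed zero numbers, so a dissipative Morse meander with a single $i=3$ crossing need not define a 3-ball. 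Establishing $\mathcal{O}\leadsto v$ for all $v\neq\mathcal{O}$ is precisely theorem~\ref{thm:5.1}, whose proof occupies sections~\ref{sec4} and~\ref{sec5}: one needs Wolfrum's adjacency criterion (lemma~\ref{lem:5.2}), the explicit zero-number computation $z(v-\mathcal{O})=j_\pm$ on the candidate hemisphere sets ${\mathcal{E}'_\pm}^j$ (proposition~\ref{prop:5.3}, via the trapping-region and serpent analysis), and the nose-retraction/scoop machinery (propositions~\ref{prop:4.1}, \ref{prop:4.3}) to control zero numbers within each closed hemisphere. Conditions (ii)--(iv) of the template are consumed here, not only in the Morse-number bookkeeping of your Step 2.

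Two further remarks. First, your Steps 1--2 essentially reprove \cite[theorem~5.2]{firo3d-1} (the SZS-pair construction and the fact that $\sigma=h_0^{-1}\circ h_1$ is a dissipative Morse meander); the paper simply cites this from part 1, so calling it ``the heart of the matter'' misplaces the difficulty. Second, your identification $\mathcal{C}_f\cong\mathcal{C}$ at the end of Step 3 is asserted rather than proved: the paper has to establish the identity $h_\iota^f=h_\iota$ between the abstract SZS-paths and the actual boundary orders of equilibria (proposition~\ref{prop:2.3} and lemmas~\ref{lem:6.1}, \ref{lem:6.2}), reducing each closed hemisphere to a planar Sturm disk via a scoop and invoking corollary~\ref{cor:3.2} before gluing along the meridian and extending radially into $c_\mathcal{O}$. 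Your skeleton-by-skeleton extension of $\Phi$ is the right final move, but it presupposes that combinatorial identification, which your sweep ``designed to invert the reconstruction'' does not by itself deliver.
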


Here $\Phi$ also identifies the abstract labels $v \in \mathcal{E}$ of the cells $c_v$ with the generating equilibria $\Phi(v) \in \mathcal{E}_f$ of the unstable manifolds $W^u(v)$ of Morse index dimension $\dim c_v = i(v)$.

In \cite{firo14} we have proved a precursor of theorem~\ref{thm:1.2}:
any finite regular cell complex which is the closure of a single 3-cell is, in fact, the dynamic complex of a suitable Sturm 3-ball.
This requires condition~(i) of definition~\ref{def:1.1}, only.
The full geometric characterization of Sturm 3-balls as 3-cell templates, in theorem~\ref{thm:1.2}, is much more detailed, of course.
It turns out that any finite regular 2-sphere complex possesses a bipolar orientation, with edge adjacent poles, and a hemisphere decomposition, with a single Western face, which defines a 3-cell template.
Therefore theorem~\ref{thm:1.2} refines \cite{firo14}.

In section~\ref{sec2} we translate the geographic language of definition~\ref{def:1.1}, for 3-cell templates, into the broader concept of signed hemisphere decompositions.
At the heart of this is a convenient notational variant of the zero number $z$.
We write
	\begin{equation}
	z(\varphi) = j_{\pm}
	\label{eq:1.4+}
	\end{equation}
to indicate $j$ strict sign changes of $\varphi$, by $j$, and $\pm \varphi (0) >0$, by the index $\pm$.
For example $z(\pm \varphi_j) = j_{\pm}$, for the $j$-th Sturm-Liouville eigenfunction $\varphi_j$.
By the Schoenflies result \cite{firo13} and \cite[proposition~3.1]{firo3d-1} this provides a disjoint \emph{signed hemisphere decomposition}
	\begin{equation}
	\partial W^u(v) =
	\bigcup\limits_{0 \leq j< i(v)} 
	\Sigma _\pm^j(v)
	\label{eq:1.12}
	\end{equation}
of the boundary sphere $\Sigma^{i(v)-1} = \partial W^u(v)$ of any unstable manifold, such that
	\begin{equation}
	\Sigma_\pm^j (v) := 
	\bigcup_{\substack{v \leadsto w\\z(w-v) = j_\pm}} W^u(w)\,.
	\label{eq:1.13}
	\end{equation}
For the fast unstable manifolds $W^k$ of $v$ with dimensions $1 \leq k \leq i(v)$, we obtain  analogously
	\begin{equation}
	\partial W^k(v) = 
	\bigcup\limits_{0 \leq j< k}
	\Sigma _\pm^j(v)\,.
	\label{eq:1.14}
	\end{equation}
See \eqref{eq:2.5}--\eqref{eq:2.12} for details.
With the abbreviation $\Sigma_\pm^j$:= $\Sigma_\pm^j(\mathcal{O})$, the translation table between the signed hemispheres decomposition \eqref{eq:1.12}, \eqref{eq:1.13} of $\partial W^u(\mathcal{O}) = \mathcal{A}_f \smallsetminus W^u(\mathcal{O})$, for the Sturm 3-ball $\mathcal{A}_f$ in theorem~\ref{thm:1.2}, and the geographic 3-cell template $\mathcal{C}$ of definition~\ref{def:1.1}, is as follows:
	\begin{equation}
	\begin{aligned}
	(\Sigma_-^0, \Sigma_+^0) \quad &\mapsto \quad (\mathbf{N}, 					\mathbf{S})\\
	(\Sigma_-^1, \Sigma_+^1) \quad &\mapsto \quad (\mathbf{EW}, 					\mathbf{WE})\\
	(\Sigma_-^2, \Sigma_+^2) \quad &\mapsto \quad (\mathbf{W}, \mathbf{E})
	\,.
	\end{aligned}
	\label{eq:1.15}
	\end{equation}
In theorem~\ref{thm:2.6} below we refine theorem~\ref{thm:1.2}, such that the homeomorphism $\Phi$ respects a signed hemisphere decomposition, not only for $\partial W^u(\mathcal{O})$ but, for the sphere boundary $\partial W^u(v)$ of any unstable manifold in $\mathcal{A}_f$.
In theorem~\ref{thm:2.7} we will show how the Sturm permutation $\sigma_f$, and therefore the Sturm global attractor $\mathcal{A}_f$ itself (up to $C^0$ orbit equivalence), is determined uniquely by the signed hemisphere decompositions \eqref{eq:1.12}, \eqref{eq:1.13}.

As an elementary example, in section~\ref{sec3}, we review and adapt our results from the planar trilogy \cite{firo08, firo09, firo10} to the present setting of signed hemispheres.
Our focus is on the equivalence of boundary bipolar orientations with the above language of signed hemisphere decompositions and fast unstable manifolds.
In particular we recall, and justify, the face transition rules of \cite[definition~2.2]{firo3d-1} for ZS-pairs $(h_0,h_1)$ in bipolar planar cell complexes, in corollary~\ref{cor:3.2}, using the language of signed hemisphere complexes.

In \cite[theorem~5.2]{firo3d-1} of part~1 we have associated a certain Sturm global attractor $\mathcal{A}_f$ to any abstractly given 3-cell template $\mathcal{C}$.
In fact we have constructed abstract paths $h_\iota$ in $\mathcal{C}$, for $\iota =0,1$, by recipe or decree ex cathedra, such that the abstract permutation
	\begin{equation}
	\sigma := h_0^{-1} \circ h_1
	\label{eq:1.16}
	\end{equation}
was a dissipative Morse meander and hence, by \cite{firo96}, a Sturm permutation $\sigma = \sigma_f$ for some concrete nonlinearity $f$.

Let us now recall this terminology in some detail.
Abstractly, a \emph{meander} is an oriented planar $C^1$ Jordan curve $\mathcal{M}$ which crosses a positively oriented horizontal axis at finitely many points.
The curve $\mathcal{M}$ is assumed to run from Southwest to Northeast, asymptotically, and all $N$ crossings are assumed to be transverse; see \cite{ar88, arvi89}.
Note $N$ is odd.
Enumerating the $N$ crossing points $v \in \mathcal{E}$ along the meander $\mathcal{M}$ and along the horizontal axis, respectively, we obtain two labeling bijections
	\begin{equation}
	h_0,h_1: \quad \lbrace 1, \ldots , N \rbrace \rightarrow \mathcal{E}\,.
	\label{eq:1.19a}
	\end{equation}
Define the \emph{meander permutation} $\sigma \in S_N$ as 
	\begin{equation}
	\sigma := h_0^{-1} \circ h_1.
	\label{eq:1.19b}
	\end{equation}
We call the meander $\mathcal{M}$ \emph{dissipative} if
	\begin{equation}
	\sigma(1) =1, \quad \sigma(N) =N
	\label{eq:1.19c}
	\end{equation}
are fixed under $\sigma$.
We define \emph{Morse numbers} $i_v$ for the intersections $v \in \mathcal{E}$ of the meander $\mathcal{M}$ with the horizontal $h_1$-axis, recursively, by
	\begin{equation}
	\begin{aligned}
	i_{h_0(1)} &:=\quad i_{h_0(N)} :=\quad 0\,,\\
	i_{h_0(j+1)} &:=\quad i_{h_0(j)}+(-1)^{j+1} 
	\text{sign} (\sigma^{-1}(j+1)-\sigma^{-1}(j))\,.
	\end{aligned}
	\label{eq:1.17}
	\end{equation}
Equivalently, by recursion along $h_1$:
	\begin{equation}
	\begin{aligned}
	i_{h_1(1)} &:=\quad i_{h_1(N)} :=\quad 0\,,\\
	i_{h_1(j+1)} &:=\quad i_{h_1(j)}+(-1)^{j+1} 
	\text{sign} (\sigma (j+1)-\sigma (j))\,.
	\end{aligned}
	\label{eq:1.18}
	\end{equation}
Note how the enumeration of intersections $v\in \mathcal{E}$ by $h_\iota$: $\{ 1, \ldots , N\} \rightarrow \mathcal{E}$ depends on $h_\iota$, of course, but the Morse numbers $i_v$ only depend on the Sturm permutation $\sigma$ which defines the meander $\mathcal{M}$.

We call the meander $\mathcal{M}$ \emph{Morse}, if
	\begin{equation}
	i_v \geq 0\,,
	\label{eq:1.24}
	\end{equation}
for all $v \in \mathcal{M}$.

We call $\mathcal{M}$ \emph{Sturm meander}, if $\mathcal{M}$ is a dissipative Morse meander; see \cite{firo96}. 
Conversely, given any permutation $\sigma \in S_N$, we can define an associated curve $\mathcal{M}$ of arches over the horizontal axis which switches sides at the intersections $\mathcal{E}=\{1,\ldots,N\}$ on the axis, in the order of $\sigma$. This fixes the labeling $h_1=\mathrm{id}$ and $h_0=\sigma^{-1}$.
A \emph{Sturm permutation} $\sigma$ is a permutation such that the associated curve $\mathcal{M}$ is a Sturm meander.
The main paradigm of \cite{firo96} is the equivalence of Sturm meanders $\mathcal{M}$ with shooting curves of the Neumann ODE problem \eqref{eq:1.3}.
In fact, the Neumann shooting curve is a Sturm meander, for any dissipative nonlinearity $f$ with hyperbolic equilibria.
Conversely, for any permutation $\sigma$ of a Sturm meander $\mathcal{M}$ there exist dissipative $f$ with hyperbolic equilibria such that $\sigma = \sigma_f$ is the Sturm permutation of $f$.
In particular, the intersections $v$ of the meander $\mathcal{M}$ with the horizontal $v$-axis are the boundary values of the equilibria $v \in \mathcal{E}_f$ at $x=1$, and the Morse number
	\begin{equation}
	i_v = i(v)
	\label{eq:1.22b}
	\end{equation}
is the Morse index of $v$.
For that reason we have used closely related notation to describe either case.

In particular, \eqref{eq:1.22b} extends the terminology of \emph{sinks} $i_v =0$, \emph{saddles} $i_v =1$, and \emph{sources} $i_v=2$ to abstract Sturm meanders.
We insist, however, that our above definition~\eqref{eq:1.19a}--\eqref{eq:1.24} is completely abstract and independent of this ODE/PDE interpretation.

For example, consider the case $i_{\mathcal{O}} =3$ of a single intersection $v= \mathcal{O}$ with Morse number $3$.
Suppose $i_v \leq 2$ for all other Morse numbers.
Then \eqref{eq:1.17} implies $i=2$ for the two $h_0$-\emph{neighbors} $h_0(h_0^{-1} (\mathcal{O})\pm1)$ of $\mathcal{O}$ along the meander $\mathcal{M}$.
In other words, these neighbors are both sources.
The same statement holds true for the two $h_1$-\emph{neighbors} $h_1(h_1^{-1}(\mathcal{O})\pm 1)$ of $\mathcal{O}$ along the horizontal axis.
To fix notation, we denote these $h_\iota$-neighbors by
	\begin{equation}
	w_\pm^\iota:= h_\iota (h_\iota^{-1} (\mathcal{O}) \pm 1)\,,
	\label{eq:1.19}
	\end{equation}
for $\iota = 0,1$.
The $h_\iota$-\emph{extreme sources} are the first and last source intersections $v$ of the meander $\mathcal{M}$ with the horizontal axis, in the order of $h_\iota$.

Reminiscent of cell template terminology, we call the extreme sinks $\mathbf{N} = h_0(1) = h_1(1)$ and $\mathbf{S} = h_0(N) = h_1(N)$ the (North and South) \emph{poles} of the Sturm meander $\mathcal{M}$.
A \emph{polar} $h_\iota$-\emph{serpent}, for $\iota =0,1$, is a set of $v =h_\iota (m) \in \mathcal{E}$ for a maximal interval of integers $m$ which contains a pole, $\mathbf{N}$ or $\mathbf{S}$, such that
	\begin{equation}
	i_{h_\iota(m)} \in \lbrace 0,1\rbrace
	\label{eq:1.20}
	\end{equation}
for all $m$.
To visualize the serpent we often include the meander or axis path joining $v$ in the serpent.
See figs.~\ref{fig:1.2} and \ref{fig:1.5} for examples.
We call $\mathbf{N}$-polar serpents and $\mathbf{S}$-polar serpents anti-polar to each other.
An \emph{overlap} of anti-polar serpents simply indicates a nonempty intersection.
For later reference, we call a polar $h_\iota$-serpent \emph{full} if it extends all the way to the saddle which is $h_{1-\iota}$-adjacent to the opposite pole.
Full $h_\iota$-serpents always overlap with their anti-polar $h_{1-\iota}$-serpent, of course, at least at that saddle.

\begin{figure}[t!]
\centering \includegraphics[width=\textwidth]{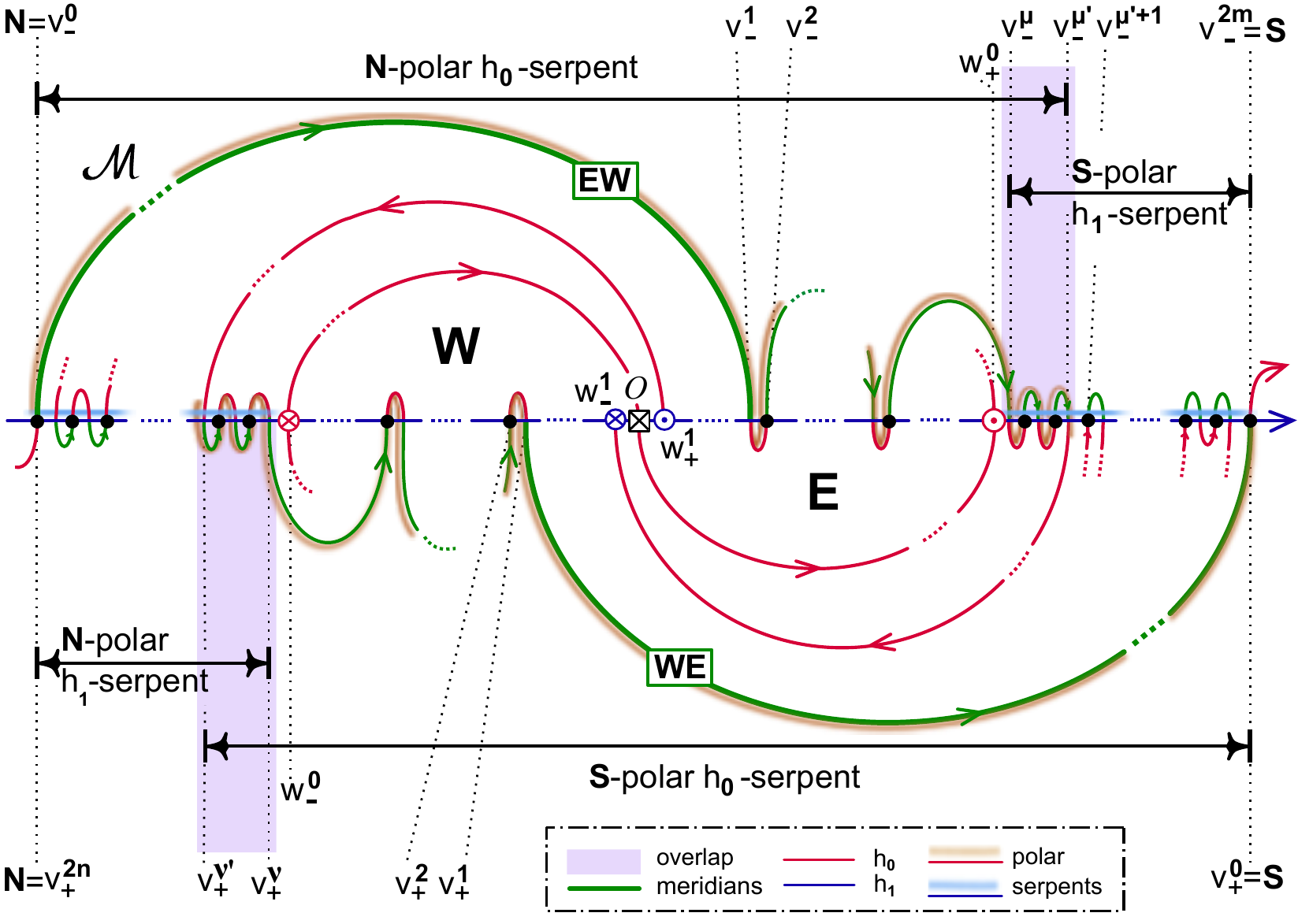}
\caption{\emph{
A 3-meander template.
Note the $\mathbf{N}$-polar $h_1$-serpent $\mathbf{N} = v_+^{2n} \ldots  v_+^\nu$ terminated at $v_+^\nu$ by the source $w_-^0$ which is, both, $h_1$-extreme minimal and the lower $h_0$-neighbor of $\mathcal{O}$.
This serpent overlaps the anti-polar, i.e. $\mathbf{S}$-polar, $h_0$-serpent $v_+^{\nu '} \ldots  v_+^\nu \ldots  v_+^0  = \mathbf{S}$, from $v_+^{\nu '}$ to $v_+^\nu$.
Similarly, the $\mathbf{N}$-polar $h_0$-serpent $\mathbf{N} = v_-^0 \ldots  v_-^{\mu '}$ overlaps the anti-polar, i.e. $\mathbf{S}$-polar, $h_1$-serpent $v_-^\mu  \ldots  v_-^{\mu '} \ldots  v_-^{2n} = \mathbf{S}$, from $v_-^\mu$ to $v_-^{\mu '}$.
The $h_1$-neighbors $w_\pm^1$ of $\mathcal{O}$ are  the $h_0$-extreme sources, by the two polar $h_0$-serpents.
Similarly, the $h_0$-neighbors $w_\pm^0$ of $\mathcal{O}$ define the $h_1$-extreme sources.
Compare also the octahedral example of fig.~\ref{fig:1.2}.
}}
\label{fig:1.5}
\end{figure}

\begin{defi}\label{def:1.3}
An abstract Sturm meander $\mathcal{M}$ with axis intersections $v \in \mathcal{E}$ is called a \emph{3-meander template} if the following four conditions hold, for $\iota = 0,1$.

\begin{itemize}
\item[(i)] $\mathcal{M}$ possesses a single axis intersection $v = \mathcal{O}$ with Morse number $i_{\mathcal{O}} =3$, and  no other Morse number exceeds $2$.
\item[(ii)] Polar $h_\iota$-serpents overlap with their anti-polar $h_{1-\iota}$-serpents in at least one shared vertex.
\item[(iii)] The intersection $v=\mathcal{O}$ is located between the two intersection points, in the order of $h_{1-\iota}$, of the polar arc of any polar $h_\iota$-serpent.
\item[(iv)] The $h_\iota$-neighbors $w^\iota_\pm$ of $v=\mathcal{O}$ are the $i=2$ sources which terminate the polar $h_{1-\iota}$-serpents.
\end{itemize}
\end{defi}

See fig.~\ref{fig:1.5} for an illustration of 3-meander templates.
Property (iv), for example, asserts that the $h_\iota$-neighbor sources $w_\pm^\iota$ of $\mathcal{O}$ are the $h_{1-\iota}$-extreme sources, for $\iota = 0,1$. For the Sturm boundary orders $h^f_\iota$
this is a useful exercise in polar serpents; see \cite[lemma~4.3(iii)]{firo3d-1}.

In \cite[theorem~5.2]{firo3d-1} we have established the passage
	\begin{equation}
	\text{3-cell template} \quad \Longrightarrow \quad
	\text{3-meander template}\,,
	\label{eq:1.21}
	\end{equation}
based on the above construction.
The 3-meander template $\mathcal{M}$ and its Sturm permutation $\sigma = h_0^{-1}h_1$, in turn, define a Sturm nonlinearity $f$ such that $\sigma_f = \sigma$.
Let $\mathcal{A}_f$ denote the Sturm global attractor of $f$.

In theorem~\ref{thm:5.1} below, we claim that $\mathcal{A}_f$ is in fact a Sturm 3-ball.
We prepare the proof, in section~\ref{sec4}, by a formal \emph{scoop of noses and signed hemispheres}, which does not affect heteroclinic connectivity in the closure of the opposite hemisphere; see \eqref{eq:4.4} and definition~\ref{def:4.2}.

We prove the refined version, theorem~\ref{thm:2.6}, of theorem~\ref{thm:1.2}, and uniqueness theorem~\ref{thm:2.7} on the Sturm permutations of prescribed Sturm 3-cell templates, in the final section~\ref{sec7}.
This is based on the crucial identity
	\begin{equation}
	h_\iota^f = h_\iota
	\label{eq:1.22}
	\end{equation}
between the labeling orders $h_\iota^f$: $\{ 1, \ldots , N\} \rightarrow \mathcal{E}_f$ of equilibria $v \in \mathcal{E}_f$, according to the order of their boundary values $v(x)$ at $x=\iota =0,1$, and the SZS labeling paths $h_\iota$ in the abstract Sturm complex $\mathcal{C} = \mathcal{C}_f$ of the cells $c_v = W^u(v)$, for all $v \in \mathcal{E} = \mathcal{E}_f$.
More precisely we will prove \eqref{eq:1.22} for the scoops $\check{h}_\iota$ and the paths $h_\iota^\pm$ defined by the abstract planar signed hemisphere complexes; see lemma~\ref{lem:6.1}.
In particular, the signed hemisphere complexes $\mathcal{C}_f^s$ of Sturm 3-ball attractors are in one-to-one correspondence with 3-template cell complexes, which are signed complexes $\mathcal{C}^s$, via the translation table \eqref{eq:1.15}.
This shows that any prescribed 3-cell template $\mathcal{C}$ can be realized as the signed hemisphere complex $\mathcal{C}_f^s$ of a Sturm 3-ball attractor $\mathcal{A}_f$.
It also shows how $\mathcal{C}$ determines $\sigma = \sigma_f$ uniquely; see theorem~\ref{thm:2.7}.
Moreover it closes the cycle of implications
	\begin{equation}
	\begin{xy}
	\xymatrix{
	 &  \text{Sturm attractor} \ar@{}[ld]^(.35){}="a"^(.65){}="b" 
	\ar@{=>} "a";"b"		&\\
    \text{Thom-Smale complex} \ar@{}[rr]^(.35){}="a"^(.65){}="b" 
	\ar@{=>} "a";"b"	  &  &  \text{meander} 
	\ar@{}[lu]^(.35){}="a"^(.65){}="b" 
	\ar@{=>} "a";"b"	}
	\end{xy}
	\label{eq:1.23}
	\end{equation}
for Sturm 3-balls.

%%%%%%%%%%%%%%%%%%%%%%%%%%%%%%%%%%%%%%%%%%%%%%%%%%%%%%%%%%%

\textbf{Acknowledgments.}
With great pleasure we express our profound gratitude to  Wal{\-}dyr~M.~Oliva, whose deep geometric insights and friendly challenges remain a visible inspiration to us since so many years.
Extended mutually delightful hospitality by the authors is mutually acknowledged.
Suggestions concerning the Thom-Smale complex were generously provided by Jean-Michel Bismut.
Gustavo~Granja has generously shared his deeply topological view point, precise references included. 
Anna~Karnauhova has contributed all illustrations with great patience, ambition, and her inimitable artistic touch.
Typesetting was expertly accomplished by Ulrike~Geiger. This work
was partially supported by DFG/Germany through SFB~647 project C8 and by FCT/Portugal through project UID/MAT/04459/2013.

%%%%%%%%%%%%%%%%%%%%%%%%%%%%%%%%%%%%%%%%%%%%%%%%%%%%%%%%%%%

\section{Signed hemispheres}
\label{sec2}

The basic tool in the proof of our main theorem~\ref{thm:1.2}, and its refinements, is a detailed analysis of the signed zero number
	\begin{equation}
	z(\varphi) = j_\pm\,,
	\label{eq:2.1}
	\end{equation}
which denotes $z(\varphi) =j$ and $\pm \varphi(0) >0$; see \eqref{eq:1.4+}.
In definition~\ref{def:2.1} below, this is used to define configurations of Sturm equilibria $v \in \mathcal{E}_f$ which we call signed hemisphere templates.
We recall how to derive the relevant information from Sturm permutations $\sigma_f$, directly and explicitly.
For independent readability later on, we also discuss Morse indices $i(v)$ and (signed) connection graphs $\mathcal{H}_f^{(s)}$, briefly.
Proposition~\ref{prop:2.2} recalls, from \cite{firo3d-1}, how signed zero numbers relate to the hemisphere decomposition by boundaries $\partial W^j$ of fast unstable manifolds $W^j$.	
In proposition~\ref{prop:2.3} we return to the planar and 3-ball cases, to summarize how the boundary label paths $h_\iota^f$ of the equilibrium orders \eqref{eq:1.5} at $x=\iota = 0,1$ traverse edges $W^u(v)$ of $i(v)=1$ saddles, faces $W^u(v)$ of $i(v)=2$ sources, and the 3-ball $W^u(\mathcal{O})$ of $i(\mathcal{O})=3$, in the Thom-Smale dynamic complex $\mathcal{C}_f$ of a Sturm 3-ball $\mathcal{A}_f$.
We compare this description with the formal definition of formal ZS-pairs and SZS-pairs $(h_0,h_1)$ in 3-ball templates.
Compare \cite[definitions~2.2, 5.1]{firo3d-1} and definitions~\ref{def:2.4}, \ref{def:2.5} below.
Noting the equivalence of proposition~\ref{prop:2.3} and definition~\ref{def:2.5}, in section~\ref{sec7}, will prove theorem~\ref{thm:2.6} which refines our main theorem~\ref{thm:1.2}:
we establish the existence of a Sturm 3-ball attractor $\mathcal{A}_f$ such that the signed Thom-Smale complex $\mathcal{C}_f^s$ of $\mathcal{A}_f$ coincides with any prescribed 3-cell template \eqref{eq:1.15}.
The equivalence is by a cell-preserving signed homeomorphism $\Phi^s$, as in \eqref{eq:hom}, which also preserves the additional sign structure.
We conclude, in theorem~\ref{thm:2.7}, by stating uniqueness of the Sturm permutation $\sigma_f$, as defined by the prescribed 3-cell template.

Let $\mathcal{A}_f$ be any Sturm global attractor.
Recall how $\mathcal{A}_f$ comes with boundary label paths $h_\iota^f$, the Sturm permutation $\sigma_f=(h_0^f)^{-1} \circ h_1^f$ and its meander $\mathcal{M}_f$, the set $\mathcal{E}_f$ of (hyperbolic) equilibria, and heteroclinic orbits $w \leadsto v$ between certain equilibria $w, v \in \mathcal{E}_f$.
We write
	\begin{equation}
	v\, \leadsto_\pm \, w\,,
	\label{eq:2.2}
	\end{equation}
if $v \leadsto w$ and $\pm(w-v)>0$ at $x=0$, respectively.
The directed \emph{connection graph} $\mathcal{H}_f$ consists of the equilibrium vertices $\mathcal{E}_f$ and directed edges $w \leadsto v$, indicating heteroclinic orbits between equilibria of adjacent Morse indices $i(w) = i(v)+1$.
Due to a cascading principle, general heteroclinic orbits $w \leadsto v$, between not necessarily adjacent Morse levels $i$, are equivalently represented by di-paths in $\mathcal{H}_f$; see \cite{brfi89, firo96} and the summary in \cite{firo3d-1}.
The \emph{signed connection graph} $\mathcal{H}_f^s$, analogously, features signed directed edges $\leadsto_\pm$, instead.

Fix any unstable equilibrium $v\in \mathcal{E}_f$, with Morse index $i(v)= \dim W^u(v) >0$.
We decompose the heteroclinic targets $v\leadsto w$ according to their signed zero number \eqref{eq:2.1} as
	\begin{equation}
	\begin{aligned}
	\mathcal{E}_\pm^j(v)
	&:= \{ w \in \mathcal{E}_f\,|\, v\leadsto_\pm w\,,\, z(w-v)=j\}\\
	&\phantom{:}=\{ w \in \mathcal{E}_f\,|\, v\leadsto w\,,\, z(w-v)=j_\pm\}\,.
	\end{aligned}
	\label{eq:2.3}
	\end{equation}
Here $0 \leq j < i(v)$, because $j=z(u-v) < i(v)$ for all $u \in \text{clos } W^u(v)\smallsetminus\{v\}$; see \cite{brfi86}.

\begin{defi}\label{def:2.1}
We call the partitions $\mathcal{E}_\pm^j(v)$, $0 \leq j < i(v)$, of the equilibria $w \in \partial W^u(v)$, the \emph{signed hemisphere template} of the Sturm attractor $\mathcal{A}_f$.

In the special case of a Sturm 3-ball $\mathcal{A}_f$ we call these partitions the \emph{signed 2-hemisphere template}.
\end{defi}

The relevant Morse and Sturm data $i(v)$ and $z(w-v)$ can easily be derived, explicitly, from the labeling paths $h_\iota = h_\iota^f \in S_N$ in \eqref{eq:1.5} and the Sturm permutation $\sigma= \sigma_f = (h_0^f)^{-1} \circ h_1^f$, as follows.
Recursively, the Morse numbers $i_v$, $v\in \mathcal{E}_f$ have been defined in \eqref{eq:1.16}.
Then \cite{furo91} have shown that
	\begin{equation}
	i(v) = i_v
	\label{eq:2.3b}
	\end{equation}
for all $v \in \mathcal{E}_f$.
Similarly, define the zero numbers $z_{v_1v_2}$ for $v_1, v_2 \in \mathcal{E}_f$, recursively, as
	\begin{equation}
	\begin{aligned}
	z_{vv} &:= i(v)\,,\\
	z_{h_0(j+1)h_0(k)} &:= z_{h_0(j)h_0(k)} + \tfrac{1}{2}(-1)^{j+1}
	\cdot\\
	&\phantom{: }\cdot
	\left[ \text{sign}\left(\sigma^{-1}(j+1)-
	\sigma^{-1}(k)\right)- 
	\text{sign}\left(\sigma^{-1}(j)-\sigma^{-1}(k)\right)\right]\,.
	\end{aligned}
	\label{eq:2.3c}
	\end{equation}
Then \cite{ro91,firo96} have shown that 
	\begin{equation}
	z(w-v) =z_{wv}\,.
	\label{eq:2.3d}
	\end{equation}
for equilibria $w \neq v$. The signed version of \eqref{eq:2.3d} follows easily from $\text{sign}(h_0^{-1}(w) -h_0^{-1}(v))$.

Definition~\ref{def:2.1} in fact provides partitions of the equilibria $w \in \partial W^u(v)$, with the exception of those $w$ which are never the target of any heteroclinic orbit $v \leadsto w$ from some equilibrium $v$ with higher Morse index $i(v) >i(w)$.
In the case of signed 2-hemisphere templates, this only excludes the 3-ball equilibrium $w=\mathcal{O}$ with $i(w)=3$.
To see this we invoke the Morse-Smale property again; see section~\ref{sec1}.
Indeed all equilibria $w \in \partial W^u(v)$ are then targets of heteroclinic orbits $v \leadsto w$.
This shows the equivalence of the connection graph $\mathcal{H}_f$ with the incidence relations,
	\begin{equation}
	v \leadsto w 
	\quad \Longleftrightarrow \quad 
	c_w \subseteq \partial c_v\,.
	\label{eq:2.4}
	\end{equation}
in the Sturm complex of cells $c_w \subseteq \partial c_v$.
For example, any equilibrium $w \neq \mathcal{O}$ satisfies $w \in \partial W^u(\mathcal{O})$, and is therefore the target of a heteroclinic orbit $\mathcal{O} \leadsto w$.

Our definition~\ref{def:2.1} of signed hemisphere templates differs slightly from the corresponding notion in \cite[definition~1.1]{firo3d-1}.
To clarify this point we have to recall first how the Schoenflies result \cite{firo13} provides a disjoint hemisphere decomposition
	\begin{equation}
	\partial W^u(v) = \bigcup\limits_{0 \leq j < i(v)}
	\Sigma _{\pm}^j
	\label{eq:2.5}
	\end{equation}
of the topological boundary $\partial W^u$:= $ \text{clos } W^u(v) \smallsetminus W^u(v)$ of the unstable manifold $W^u(v)$, for any hyperbolic equilibrium $v$.
The construction of the disjoint hemispheres $\Sigma_{\pm}^j = \Sigma_{\pm}^j(v)$ can be summarized as follows.
For $1 \leq j \leq i(v)$, let $W^j$ denote the $j$-dimensional fast unstable manifold of $v$.
The tangent space to $W^j$ at $v$ is spanned by the eigenfunctions $\varphi_0, \ldots , \varphi_{j-1}$ of the linearization of \eqref{eq:1.3} at $v$, for the first $j$ eigenvalues $\lambda_0 > \ldots > \lambda_{j-1} >0$.
Consider any orbit $u(t, \cdot) \in W^{j+1} \smallsetminus W^j$, $t \in \mathbb{R}$.
Then
	\begin{equation}
	\lim_{t \rightarrow -\infty} \left( u \left( t,\cdot\right) -v 				\right)\, 	/\, |u\left(t, \cdot\right) -v| = \pm \varphi_j\,; 
	\label{eq:2.6}
	\end{equation}		
by normalization of $\varphi_j$ in the appropriate norm of the phase space $X \hookrightarrow C^1$.
Here and below we fix signs such that $\varphi_j(0) >0$.
In particular, the signed zero number $z$ of \eqref{eq:1.4} satisfies
	\begin{equation}
	\lim_{t \rightarrow - \infty} z\left( u \left( t, \cdot\right) -v 			\right) = z(\pm\varphi_j) = j_\pm\,.
	\label{eq:2.7}
	\end{equation}
See \cite{brfi86} for further details on the construction of $W^j$.

The \emph{signed hemispheres} $\Sigma_\pm^j$ are defined, recursively, by the disjoint unions
	\begin{equation}
	\Sigma^j:= \partial W^{j+1} = \Sigma_-^j \, \cup \, \Sigma_+^j 			\, \cup\,  \Sigma^{j-1}\,,
	\label{eq:2.8}
	\end{equation}
for $0 \leq j < i(v)$, with the convention $\Sigma ^{-1}$:= $ \emptyset$.
The hemisphere closures
	\begin{equation}
	\text{clos } \Sigma_\pm^j = \Sigma_\pm^j \, \dot{\cup}\, 						\Sigma^{j-1}
	\label{eq:2.9}
	\end{equation}
can be obtained as $\omega$-limit sets of protocap hemispheres which are $C^1$-small, nearly parallel, perturbations of $\text{clos } W^j$ in $\text{clos } W^{j+1}$, in the eigendirections $\pm \varphi_j$, respectively.
In particular \eqref{eq:2.6}, \eqref{eq:2.7} hold in the interior of the protocaps, and for any heteroclinic orbit $v \leadsto w \in \Sigma_\pm^j$.
See \cite{firo13} for complete details.

The following proposition was proved in \cite[proposition~3.1]{firo3d-1}, again with the abbreviations $\Sigma_\pm^j = \Sigma_\pm^j(v)$.

\begin{prop}\label{prop:2.2}
With the above notation the following statements hold true for equilibria $v,w,w_1, w_2,$ and all $0 \leq j < i(v)$:
\begin{align}
w \in \Sigma^j 
\quad &\Longrightarrow \quad i(w) \leq j\;\tag{i}\\
w \in \Sigma^j 
\quad &\Longrightarrow \quad z(w-v) \leq j\;\tag{ii}\\
w \in \Sigma_\pm^j 
\quad &\Longrightarrow \quad z(w-v) =j_\pm\;\tag{iii}\\
w_1, w_2 \in \text{clos } \Sigma_+^j  \text{ or } 
w_1,w_2 \in \text{clos } \Sigma_-^j
\quad &\Longrightarrow \quad z(w_1-w_2) \leq j-1\,.\tag{iv}
\end{align}
\end{prop}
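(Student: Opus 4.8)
The plan is to prove (i)–(iv) by a simultaneous induction on $j$, exploiting the recursive definition \eqref{eq:2.8} of the hemispheres as $\partial W^{j+1}=\Sigma_-^j\,\dot\cup\,\Sigma_+^j\,\dot\cup\,\Sigma^{j-1}$ together with the dynamical description \eqref{eq:2.6}, \eqref{eq:2.7} of the limiting tangent direction of orbits in $W^{j+1}\smallsetminus W^j$, and the drop property of the zero number at multiple zeros \eqref{eq:1.4}. The base case $j=0$ is essentially trivial: $\Sigma^0=\partial W^1$ consists of the two equilibria reached by the two branches of the one-dimensional fast unstable manifold $W^1$, and \eqref{eq:2.6}–\eqref{eq:2.7} give $z(w-v)=0_{\pm}$ directly, so $i(w)=0$ by \eqref{eq:1.22b}/Morse-theoretic positivity of $z$, settling (i)–(iii); (iv) is vacuous since $\mathrm{clos}\,\Sigma_{\pm}^0$ is a single point.

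For the inductive step, assume (i)–(iv) hold for all indices below $j$. First I would establish (iii): if $w\in\Sigma_{\pm}^j$, then by \eqref{eq:2.9} $w$ lies in the closure of the protocap hemisphere which is a $C^1$-small perturbation of $\mathrm{clos}\,W^j$ inside $\mathrm{clos}\,W^{j+1}$ in the $\pm\varphi_j$ direction. There is a heteroclinic orbit $v\leadsto w$ lying in $W^{j+1}\smallsetminus W^j$ (if it lay in $W^j$, then $w\in\partial W^j=\Sigma^{j-1}$, contradicting disjointness in \eqref{eq:2.8}), so by \eqref{eq:2.6}–\eqref{eq:2.7} the backward limit of $z(u(t,\cdot)-v)$ equals $z(\pm\varphi_j)=j_{\pm}$; since $z$ is nonincreasing in $t$ and $z(w-v)=\lim_{t\to+\infty}z(u(t,\cdot)-v)$, we get $z(w-v)\le j_{\pm}$, and a strict drop would force a multiple zero somewhere, contradicting hyperbolicity/genericity along the connecting orbit — hence equality $z(w-v)=j_{\pm}$. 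Statement (ii) is then immediate: $w\in\Sigma^j$ means $w\in\Sigma_{\pm}^j$ or $w\in\Sigma^{j-1}$; the former gives $z(w-v)=j$ by (iii), the latter $z(w-v)\le j-1<j$ by the inductive hypothesis (ii). Statement (i) follows from (ii) applied with $w$ in the role of the base equilibrium too, or more directly: $w\in\Sigma^j=\partial W^{j+1}$, and the Morse index of any equilibrium in the boundary of an $(j{+}1)$-dimensional unstable manifold is at most $j$ (Henry's observation, cited in the introduction, combined with the cascading/transversality structure), which one can also read off from the zero-number bound since $i(w)=z(\varphi^w_{i(w)-1})\le z(w-v)\le j$ when the eigendirection at $w$ is compared appropriately.

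The main work, and the expected obstacle, is (iv): given $w_1,w_2\in\mathrm{clos}\,\Sigma_+^j$ (or both in $\mathrm{clos}\,\Sigma_-^j$), show $z(w_1-w_2)\le j-1$. The idea is that both $w_1$ and $w_2$ lie in (the $\omega$-limit of) the \emph{same} protocap hemisphere $\mathcal{P}$, a $C^1$-small perturbation of $\mathrm{clos}\,W^j$; one approximates $w_1,w_2$ by points $u_1(t,\cdot),u_2(t,\cdot)$ on trajectories inside this protocap, whose backward limits along $W^j$ have $z\le j-1$ by the analogue of \eqref{eq:2.7} one dimension down (the protocap being $C^1$-close to $W^j$, whose boundary equilibria already satisfy the bound by induction). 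By continuity and the fact that $z$ drops strictly at multiple zeros, $z(u_1(t,\cdot)-u_2(t,\cdot))\le j-1$ for $t$ along the flow, and passing to the limit $t\to+\infty$ gives $z(w_1-w_2)\le j-1$. The delicate point is justifying that the comparison of two orbits lying in a thin perturbed copy of $W^j$ genuinely inherits the zero-number bound of $W^j$ itself — one must rule out that the perturbation raises the zero number, which is where the $C^1$-smallness of the protocap and the nonincrease of $z$ along the parabolic flow (applied to $u_1-u_2$) are essential; this is exactly the argument carried out in detail in \cite{firo13}, and here one invokes it. Finally, $w=\mathcal{O}$-type exceptions do not arise because within $\partial W^{j+1}$ every equilibrium is a genuine heteroclinic target, as noted around \eqref{eq:2.4}.
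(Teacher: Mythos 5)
The paper offers no proof of this proposition at all: it is quoted from part~1 of the trilogy and justified by the single sentence ``proved in \cite[proposition~3.1]{firo3d-1}'', with only the protocap construction \eqref{eq:2.5}--\eqref{eq:2.9} recalled as background. So your sketch has to stand on its own. Parts (i) and (ii) are essentially fine: $z(u-v)\le j$ on $W^{j+1}\smallsetminus\{v\}$ (from \cite{brfi86}) plus semicontinuity of $z$ at the simple-zero profile $w-v$ gives (ii), and the standard inequality $i(w)\le z(w-v)<i(v)$ for heteroclinic targets gives (i) --- note the small slip that $z(\varphi^w_{i(w)-1})=i(w)-1$, not $i(w)$. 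For (iv) you correctly isolate the delicate point and defer it to \cite{firo13}, which is consistent with what the paper itself does.

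The genuine gap is in (iii), exactly at the step you treat as routine. After obtaining $z(w-v)\le j_\pm$ from monotonicity, you claim equality because ``a strict drop would force a multiple zero somewhere, contradicting hyperbolicity/genericity along the connecting orbit.'' Hyperbolicity of equilibria says nothing about multiple zeros of $u(t,\cdot)-v$ at finite times, and such drops are not non-generic events: they occur on every heteroclinic whose forward and backward asymptotic zero numbers differ. Concretely, an orbit of $W^{3}(\mathcal{O})\smallsetminus W^{2}(\mathcal{O})$ converging to the pole $\mathbf{N}=\Sigma_-^0(\mathcal{O})$ has $z(u(t,\cdot)-\mathcal{O})\to 2$ as $t\to-\infty$ and $\to 0$ as $t\to+\infty$, so it must pass through multiple zeros; such orbits exist in every Sturm 3-ball. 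Your argument nowhere uses the hypothesis $w\notin\Sigma^{j-1}$, so as written it would ``prove'' $z(w-v)=j_\pm$ for \emph{every} $w\in\Sigma^j$, which is false. The real content of (iii) is precisely that the set of targets where $z$ drops below $j$ coincides with $\Sigma^{j-1}=\partial W^j$; this needs the protocap/Schoenflies machinery of \cite{firo13} (or, equivalently, a blocking argument showing $z(w-v)\le j-1\Rightarrow w\in\partial W^j$), not a genericity appeal. Relatedly, your parenthetical only shows that a connecting orbit to $w$ lying in $W^{j+1}$ cannot lie in $W^j$; it does not establish that a connecting orbit inside $W^{j+1}$ exists, which is also part of that machinery.
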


In \cite[definition~1.1]{firo3d-1} the sets $\mathcal{E}_\pm^j(v)$ of the signed hemisphere templates \eqref{eq:2.3} had been defined as
	\begin{equation}
	\Tilde{\mathcal{E}}_\pm^j(v) := \mathcal{E}_f \cap \Sigma_\pm^j(v)\,,
	\label{eq:2.10}
	\end{equation}
instead.
By proposition~\ref{prop:2.2}(iii), the sets $\mathcal{E}_\pm^j(v)$ and $\Tilde{\mathcal{E}}_\pm^j(v)$ coincide, for each $j$.

Conversely, we can describe the signed hemispheres $\Sigma_\pm^j(v)$ directly, via the signed hemisphere template \eqref{eq:2.3} of equilibrium sets $\mathcal{E}_\pm^j(v)$.
Indeed \eqref{eq:1.13} now reads
	\begin{equation}
	\Sigma_\pm^j (v) = \bigcup\limits_{w \in \mathcal{E}_\pm^j(v)}
	W^u(w)\,.
	\label{eq:2.10a}
	\end{equation}
This allows us to define a \emph{signed Sturm complex} $\mathcal{C}_f^s$, as a refinement of the Sturm complex $\mathcal{C}_f$ with (regular) Thom-Smale cells $W^u(v)$, $v \in \mathcal{E}_f$.
We simply keep track, in $\mathcal{C}_f^s$, which cells $W^u(w)$ of $\mathcal{C}_f$ belong to which hemisphere $\Sigma_\pm^j(v)$ in the signed hemisphere decomposition of $\mathcal{C}_f$.

We now focus on the case of a Sturm 3-ball $\mathcal{A}_f$.
Our next proposition describes, in terms of the dynamic cell decompositions of $\mathcal{A}_f$ by the Thom-Smale cells $c_v \in W^u(v)$, how the labeling bijections $h_\iota^f$, $\iota=0,1$, traverse each cell.
Let $0<n$:= $i(v) \leq 3$ be the Morse index of $v$.
For fixed $n$, consider sequences $\mathbf{s} = s_0 \ldots s_{n-1}$ of $n$ symbols $s_i \in \left\lbrace \pm \right\rbrace$. In fact, let us restrict to the four cases of constant and alternating sequences of signs $s_i$. For any such prescribed sequence $\mathbf{s} = s_0 \ldots s_{n-1}$ let $w=w(\mathbf{s}) \in \Sigma_{s_{n-1}}^{n-1}(v)$ denote the unique equilibrium such that $v\leadsto w $ starts a heteroclinic cascade
	\begin{equation}
	v \leadsto v_{n-1} \leadsto \ldots \leadsto v_0
	\label{eq:2.12}
	\end{equation}
with $w=v_{n-1}$ and $v_i \in \Sigma_{s_i}^i(v)$ of descending Morse indices $i(v_i)=i=n-1, \ldots , 0$.
Equivalently, by \eqref{eq:2.4}, we may express the same definition on the level of Thom-Smale cells as
	\begin{equation}
	c_{v_i} \subseteq \partial c_{v_{i+1}} \cap \Sigma_{s_i}^i(v)\,,
	\label{eq:2.11}
	\end{equation}
with $v=v_n,\ w=w_{n-1}$, and $v_i \in \Sigma_{s_i}^i(v)$ of ascending Morse indices $i(v_i)=i=0, \ldots , n-1$. 

Again, we do not claim existence of $w(\mathbf{s})$ except in the four cases of constant and alternating signs $s_i$.
Uniqueness of $w(\mathbf{s})$, for given symbol sequence $\mathbf{s}$, can be proved by induction on $n$. For some $v$, however, certain equilibria $w(\mathbf{s})$ with different symbol sequences may happen to coincide.

\begin{prop}\label{prop:2.3}
Fix $0< n$:= $i(v) \leq 3$, $\iota =0,1$, and assume $v \in \mathcal{E}_f$ is not already directly preceded, or directly followed, by an equilibrium of higher Morse index than $n=i(v)$, along the labeling bijection $h_\iota^f: \lbrace 1, \ldots , N \rbrace \rightarrow \mathcal{E}_f$.
Then the unclaimed parts of $h_\iota^f$ through $v$ follow the template table

\begin{center}
\begin{tabular}{|c||c|c|}
\hline
	&	$h_0^f$	&	$h_1^f$
\\ \hline \hline
$n=i(v)=1$	&	$\ldots w(-)\,v\,w(+) \ldots$	&	$\ldots w(-)\,v\,w(+) \ldots$	\\ \hline
$n=i(v)=2$	&	$\ldots w(+-)\,v\,w(-+) \ldots$	&	$\ldots w(++)\,v\,w(--) \ldots$
\\ \hline
$n=i(v)=3$	&	$\ldots w(-+-)\,v\,w(+-+) \ldots$	&	$\ldots w(---)\,v\,w(+++) \ldots$
\\ \hline
\end{tabular}
\end{center}
\end{prop}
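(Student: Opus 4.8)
The plan is to fix $v\in\mathcal{E}_f$ with $n:=i(v)\in\{1,2,3\}$ and $\iota\in\{0,1\}$, and to identify the two immediate $h_\iota^f$-neighbours $w^-:=h_\iota^f(k-1)$ and $w^+:=h_\iota^f(k+1)$ of $v=h_\iota^f(k)$ with the equilibria $w(\mathbf{s})$ of the constant or alternating sign sequences $\mathbf{s}$ in the table. The first step is to place $w^\pm$ among the signed hemispheres of $v$. By the Morse number recursions \eqref{eq:1.17}, \eqref{eq:1.18} together with the identity $i_v=i(v)$ of \eqref{eq:2.3b}, consecutive intersections along either labelling order differ in Morse index by exactly $1$, since the sign terms there equal $\pm1$; hence the hypothesis that $v$ is not directly preceded or followed by an equilibrium of Morse index exceeding $n$ forces $i(w^\pm)=n-1$. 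Since $w^\pm$ and $v$ are consecutive in the order at $x=\iota$, no equilibrium has boundary value strictly between them there, so the adjacency criterion for heteroclinic orbits \cite{wo02} (see also \cite{firo96, firo3d-1}) gives a connection between $v$ and $w^\pm$, oriented $v\leadsto w^\pm$ because $i(w^\pm)=i(v)-1$. Thus $w^\pm\in\partial W^u(v)=\Sigma^{n-1}(v)$, and Proposition~\ref{prop:2.2}(i) together with \eqref{eq:2.8} excludes $w^\pm\in\Sigma^{n-2}(v)$, so $w^\pm$ lies in one of the top hemispheres $\Sigma^{n-1}_\pm(v)$; by Proposition~\ref{prop:2.2}(iii) then $z(w^\pm-v)=(n-1)_\pm$, the sign being the last symbol $s_{n-1}$ of the sought sequence.

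The last symbol now follows from the parity of the zero number. Since $w^--v<0$ and $w^+-v>0$ at $x=\iota$: for $\iota=0$ this is already the sign at $x=0$, giving $s_{n-1}=-$ for $w^-$ and $s_{n-1}=+$ for $w^+$ regardless of $n$, which matches the $h_0^f$ column; for $\iota=1$, the $n-1$ sign changes of $w^\pm-v$ flip the sign between $x=1$ and $x=0$ by $(-1)^{n-1}$, so $s_{n-1}=-(-1)^{n-1}$ for $w^-$ and $s_{n-1}=+(-1)^{n-1}$ for $w^+$, which for $n=1,2,3$ gives the last symbols $(-,+),(+,-),(-,+)$ in the $h_1^f$ column. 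It remains to trace the lower symbols $s_0,\dots,s_{n-2}$, i.e.\ the heteroclinic cascade \eqref{eq:2.12} from $v$ through $w^\pm$, checking that the intermediate equilibria fall into the hemispheres $\Sigma^j_{s_j}(v)$ of the prescribed pattern. For $n=1$ this is vacuous. For $n=2$, $v$ is a source and $w^\pm$ is a saddle on the closed meridian arc $\text{clos}\,\Sigma^1_{s_1}(v)$, whose two endpoints are the sinks $\Sigma^0_\pm(v)$; the description of how $h_\iota^f$ traverses a $2$-cell, from the planar Sturm trilogy \cite{firo08, firo09, firo10} and recalled here via the ZS-pair face transition rules of \cite[definition~2.2]{firo3d-1} and corollary~\ref{cor:3.2}, identifies $w^-$ as the saddle of that arc adjacent to the sink $\Sigma^0_+(v)$ and $w^+$ as the one adjacent to $\Sigma^0_-(v)$, i.e.\ $s_0=+$ for $w^-$ and $s_0=-$ for $w^+$, which yields $w(+-),w(-+)$ for $\iota=0$ and $w(++),w(--)$ for $\iota=1$.

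The principal case is $n=3$, where $v=\mathcal{O}$ and $w^\pm=w^\iota_\pm$ are the source neighbours \eqref{eq:1.19} of $\mathcal{O}$, already known to lie in $\Sigma^2_-(\mathcal{O})$ and $\Sigma^2_+(\mathcal{O})$ respectively. Here I use part~1: by definition~\ref{def:1.3}(iv) and \cite[lemma~4.3]{firo3d-1} the $h_\iota$-neighbours $w^\iota_\pm$ are the $h_{1-\iota}$-extreme sources, hence, through the translation table \eqref{eq:1.15} between the signed hemispheres $\Sigma^j_\pm(\mathcal{O})$ and the geographic data $\mathbf{N},\mathbf{S},\mathbf{EW},\mathbf{WE},\mathbf{W},\mathbf{E}$, they are exactly the four meridian-adjacent polar faces $\mathbf{NE},\mathbf{NW},\mathbf{SE},\mathbf{SW}$ of definition~\ref{def:1.1}(iv); concretely $w^0_-=\mathbf{NE}$, $w^0_+=\mathbf{SE}$, $w^1_-=\mathbf{NW}$, $w^1_+=\mathbf{SW}$. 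For each such face one then reads off its cascade from $\mathcal{O}$ by applying the $n=2$ traversal just established to the face, with the hemisphere labels of its boundary circle inherited from $\mathcal{O}$ through \eqref{eq:1.15} and the edge orientations fixed by rule~(iii) of definition~\ref{def:1.1}: for instance the boundary of $\mathbf{NE}\subseteq\mathbf{W}=\Sigma^2_-(\mathcal{O})$ meets the meridian $\mathbf{WE}=\Sigma^1_+(\mathcal{O})$ and the pole $\mathbf{N}=\Sigma^0_-(\mathcal{O})$, so the cascade continues through $\Sigma^1_+(\mathcal{O})$ to $\Sigma^0_-(\mathcal{O})$ and $w^0_-=w(-+-)$. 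I expect the bookkeeping of this last step --- tracking which of the two meridians and which pole each of the four polar faces is attached to, consistently with \eqref{eq:1.15} and the bipolar orientation rule~(iii), so that the $h_0$-neighbours acquire the alternating pattern $w(-+-),w(+-+)$ while the $h_1$-neighbours acquire the constant pattern $w(---),w(+++)$ --- to be the main obstacle; I would organise it as an induction on the cell dimension, feeding the already established $n\le2$ rows into the faces incident to $\mathcal{O}$ with their sign sequences inherited through the translation table \eqref{eq:1.15}.
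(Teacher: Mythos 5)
Your opening step --- forcing $i(w^\pm)=n-1$ from the Morse recursion, obtaining $v\leadsto w^\pm$ because immediate $h_\iota$-neighbors cannot be blocked, and then reading off the last symbol $s_{n-1}$ from the sign of $w^\pm-v$ at $x=\iota$ and the parity flip across $n-1$ sign changes --- is exactly the paper's first move (its \eqref{eq:2.13}), and it correctly settles $n=1$. The two remaining rows, however, are not proved.

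For $n=2$ you invoke ``the description of how $h_\iota^f$ traverses a $2$-cell'' via the ZS-pair face rules and corollary~\ref{cor:3.2}. Inside this paper that is circular: the statement that the \emph{boundary order} $h_0^f$ traverses a face as $\ldots w(+-)\,v\,w(-+)\ldots$, phrased in terms of the signed hemispheres $\Sigma_\pm^j(v)$, is derived in section~\ref{sec3} (see the paragraph around \eqref{eq:3.8}) \emph{from} proposition~\ref{prop:2.3}; and the planar trilogy predates the signed hemisphere decomposition of \cite{firo13}, so it does not hand you the result in this form either. What actually has to be shown is that the $h_0^f$-successor $w\in\Sigma_+^1(v)$ satisfies $\Sigma_-^0(w)=\Sigma_-^0(v)$, i.e.\ that the tail of the edge $c_w$ is the boundary minimum of the face. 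The paper proves this by an indirect $z$-dropping (blocking) argument: if $\mathbf{N}(w)\neq\mathbf{N}(v)$, the chain of inequalities \eqref{eq:2.14}, \eqref{eq:2.15} holds for all $x$ and blocks the heteroclinic orbit $v\leadsto\mathbf{N}(v)$. That argument, or a substitute for it, is missing from your proposal.

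For $n=3$ your skeleton is in fact the paper's: identify $w(-+-)$ with the source of $\mathbf{NE}$ via the cascade and the translation table \eqref{eq:1.15}, then feed the $n=2$ row back in. But the step you defer as ``bookkeeping'' is the real content. Knowing that the four $h_\iota$-neighbors of $\mathcal{O}$ are the four $h_{1-\iota}$-extreme sources does not yet tell you \emph{which} polar face each one is; you must prove that the source of $\mathbf{NE}$ is the direct $h_0$-predecessor of $\mathcal{O}$. The paper does this in two genuine steps: first the inclusion $\Sigma_+^1(w_-^0)\subseteq\mathbf{WE}=\Sigma_+^1(\mathcal{O})$ of \eqref{eq:2.17}, which uses the meridian orientation rule of definition~\ref{def:1.1}(iii) together with a sign comparison at $x=0$ to rule out $\Sigma_-^1(w_-^0)$; and second an indirect application of the already-proved $n=2$ row to $v:=w_-^0$, producing the contradiction $w_-^0<\mathcal{O}<w$ at $x=0$ in \eqref{eq:2.19} if $\mathcal{O}$ were not the immediate successor. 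Without these arguments the assignment $w_-^0=\mathbf{NE}$, $w_+^0=\mathbf{SE}$, $w_-^1=\mathbf{NW}$, $w_+^1=\mathbf{SW}$ is asserted rather than established, so the $n=3$ row remains open.
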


\begin{proof}[\textbf{Proof.}]
By adjacency \eqref{eq:1.17}, \eqref{eq:1.18} of Morse indices for $h_\iota$-adjacent equilibria, we only have to consider the case $i(w) = n-1$, $i(v) = n$ for the $w$-entries in the table.
In particular, the unique heteroclinic orbits $u(t)$:= $v\leadsto w$ imply $z(w-v) = (n-1)_{s_{n-1}}$ with
	\begin{equation}
	s_{n-1}=
	\left\lbrace 
	\begin{aligned}
	&\text{sign}\left(h_0(w)-h_0(v)\right)\,, 
	\qquad &\text{for}\quad \iota=0\,,\\
	&(-1)^{n-1} \text{ sign}\left(h_1(w)-h_1(v)\right)\,,
	\qquad &\text{for}\quad \iota=1\,.	
	\end{aligned}
	\right.
	\label{eq:2.13}
	\end{equation}
This fixes the last entries $s_{n-1}$ in the arguments $\mathbf{s}$ of $w(\mathbf{s})$ in the table, and takes care of the trivial case $n=1$.

For $n=2$ let $w$ denote the direct $h_0^f$-successor of $v$.
We may assume $i(w)=i(v)-1=1$, or else nothing has been claimed.
Hence $w \in \Sigma_+^1(v)$.
We have to show $w=w(-+)$, i.e. $\mathbf{N}(v)$:= $\Sigma_-^0(v) = \Sigma_-^0(w)$=: $\mathbf{N}(w)$.
Suppose, indirectly, that $\mathbf{N}(w) \neq \mathbf{N}(v)$.
Then
	\begin{equation}
	\mathbf{N}(v) < \mathbf{N}(w) < w\,.
	\label{eq:2.14}
	\end{equation}
Indeed, the right inequality holds by definition, for all $0 \leq x \leq 1$.
Moreover $w \in \Sigma_+^1(v)$ implies $\mathbf{N}(w) \in \text{clos }\Sigma_+^1(v) \geq \mathbf{N}(v)$, by invariance.
Hence $\mathbf{N}(w)\neq \mathbf{N}(v)$ implies the left inequality of \eqref{eq:2.14}.
Because $w$ is the direct $h_0^f$-successor of $v \neq \mathbf{N}(w)$, we can also conclude	
	\begin{equation}
	\mathbf{N}(w) < v
	\label{eq:2.15}
	\end{equation}
at $x=0$.
Since $w \in \Sigma_+^1(v)$ implies $z(w-v) =1_+$, the same inequality \eqref{eq:2.15} holds at $x=1$, because $\mathbf{N}(w)<w<v$ there.
Since $\mathbf{N}(w) \in \text{clos } \Sigma_+^1(v) \subseteq \Sigma^1(v)$ implies $z(\mathbf{N}(w)-v) \leq 1$, by proposition~\ref{prop:2.2}(ii), we conclude that \eqref{eq:2.15} holds for all $0 \leq x \leq 1$.
But then $z$-dropping \eqref{eq:1.4} and $\mathbf{N}(w) > \mathbf{N}(v)$ block the heteroclinic orbit $u(t,\cdot)$: $v\leadsto \mathbf{N}(v) = \Sigma_-^0(v)$, which exists by definition.
Indeed $z(u(t,\cdot)-\mathbf{N}(w)) = 0_\pm$ for large $\pm t > 0$ would have to drop below zero when $u(t_0, \cdot)=\mathbf{N}(w)$ at the Neumann boundary $x=0$.
This contradiction shows $\Sigma_-^0(w) = \mathbf{N}(w) = \mathbf{N}(v)=\Sigma_-^0(v)$ and hence confirms $w=w(-+)$.
The remaining cases for $n=i(v)=2$ are omitted because they are analogous, thanks to  the four trivial equivalences generated by $x\mapsto1-x$ and $u\mapsto -u$; see our introduction and \cite[definition~2.3]{firo3d-1} .

The above idea of blocking heteroclinic orbits by elementary arguments on $z$-dropping goes back to \cite{brfi88, brfi89}.
For a refined version due to Wolfrum see lemma~\ref{lem:5.2} below.

It remains to address the case $n=i(v)=3$, i.e., $v= \mathcal{O}$.
The four trivial equivalences, again, reduce the problem to showing that $w(-+-)$ is the $h_0$-predecessor of $\mathcal{O}$.
We invoke \cite[theorem~4.1]{firo3d-1}.
There, it was shown that the Thom-Smale complex $c_v= W^u (v)$ of any Sturm 3-ball is in fact a 3-cell template with the translation table \eqref{eq:1.15} between signed hemispheres and geographic terminology.
In fig.~\ref{fig:1.4} of the general 3-cell template, this identifies $w(-+-)$ as the source
	\begin{equation}
	w_-^0 = w(-+-)
	\label{eq:2.16}
	\end{equation}
of the face $\mathbf{NE}$.
Indeed $\mathbf{NE}$ is the unique face of $\mathbf{W}= \Sigma_-^2(\mathcal{O})$, which is adjacent to the unique 1-cell $W^u(v_1)$ of $\mathbf{WE}=\Sigma_+^1(\mathcal{O})$ which, in turn, is itself adjacent to the unique 0-cell $v_0=\mathbf{N}= \Sigma_-^0(\mathcal{O})$. See \eqref{eq:2.11} and \eqref{eq:2.12}.

We show that $\mathcal{O}$ is the direct $h_0$-successor of $w_-^0$.
We first claim
	\begin{equation}
	\Sigma_+^1(w_-^0) \subseteq \mathbf{WE}= \Sigma_+^1(\mathcal{O})\,.
	\label{eq:2.17}
	\end{equation}
By definition~\ref{def:1.1}(iii), the non-meridian edges of the cell boundary $\partial c_{w_-^0}$ are oriented towards the unique boundary minimum $\Sigma_+^0(w_-^0) \subseteq \mathbf{WE} \cup \mathbf{S}$.
Hence one of the hemisphere boundaries $\Sigma_\pm^1(w_-^0)$ must be entirely contained in the meridian $\mathbf{WE}=\Sigma_+^1(\mathcal{O})$. For the boundary $\Sigma_-^1(w_-^0)$, this is impossible because $w_-^0 \in \Sigma_-^2(\mathcal{O})$ implies $\Sigma_-^1(w_-^0) < \mathcal{O}$ at $x=0$, rather than $\Sigma_-^1(w_-^0) > \mathcal{O}$.
This proves claim \eqref{eq:2.17}.

Next suppose, indirectly, that $\mathcal{O}$ is not the direct $h_0$-successor of $w_-^0$.
Then the current proposition applies to $v$:= $w_-^0$ with Morse index $i(v)=2$.
This identifies the direct $h_0$-successor of $v=w_-^0$ to be the unique equilibrium $w$ with 1-cell $c_w \subseteq \Sigma_+^1(w_-^0)$ adjacent to $\Sigma_-^0(w_-^0) = \mathbf{N}$.
By \eqref{eq:2.17} this implies
	\begin{equation}
	w \in \Sigma_+^1(w_-^0) \subseteq \Sigma_+^1(\mathcal{O})\,.
	\label{eq:2.18}
	\end{equation}
Evaluation at $x=0$, in proposition~\ref{prop:2.2}(iii), provides the right inequality of
	\begin{equation}
	w_-^0 < \mathcal{O} < w\,,
	\label{eq:2.19}
	\end{equation}
at $x=0$.
Likewise, the left inequality at $x=0$ follows from $w_-^0 =w(-+-) \in \Sigma_-^2(\mathcal{O})$; see \eqref{eq:2.16}.
Therefore $w$ cannot be the direct $h_0$-successor of $w_-^0$.

This contradiction to the definition of $w$ proves the proposition.		
\end{proof}

\begin{figure}[t!]
\centering \includegraphics[width=0.43\textwidth]{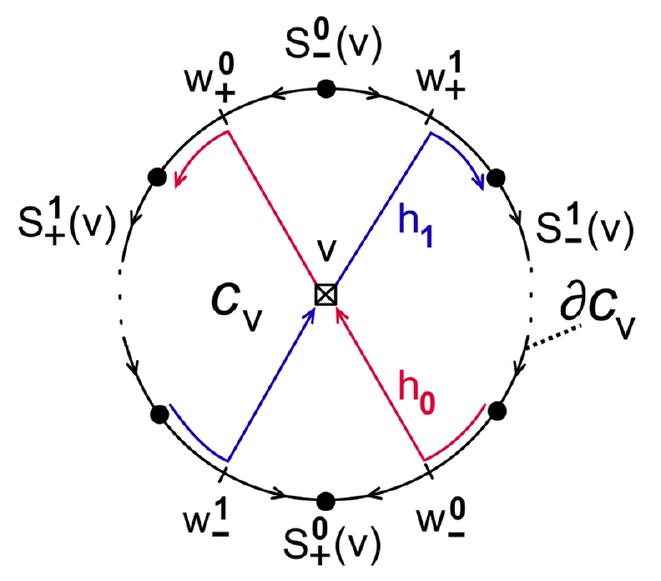}
\caption{\emph{
Traversing a face vertex $v$ by a ZS-pair $(h_0, h_1)$.
Note the resulting shapes ``Z'' of $h_0$ (red) and ``S'' of $h_1$ (blue).
The paths $h_\iota$ may also continue into neighboring faces, beyond $w_\pm^\iota$, without turning into the face boundary $\partial c$.
}}
\label{fig:2.1}
\end{figure}

In \cite[definitions~2.2 and 5.1]{firo3d-1} we have introduced formal ZS-pairs and SZS-pairs of paths $(h_0,h_1)$ associated to bipolar planar cell complexes and 3-cell templates, respectively.
In the following sections we will see how these formal recipes coincide, precisely, with the template table of proposition~\ref{prop:2.3}, for the traversals of the Sturm paths $(h_0^f, h_1^f)$ through the dynamic cells $c_v=W^u(v)$ of planar and 3-ball Sturm attractors, in terms of their signed hemisphere decompositions.

More precisely, let us first recall \cite[definition~2.2]{firo3d-1}; see also \cite{firo09}.
Let $\mathcal{C} = \bigcup_{v\in\mathcal{E}} c_v \subseteq \mathbb{R}^2$ be a finite regular planar cell complex with boundary bipolar orientation of the 1-skeleton $\mathcal{C}^1$.
Let $v$ indicate any source, i.e. the barycenter of a 2-cell face $c_v$ in $\mathcal{C}$.
By planarity of $\mathcal{C}$ it turns out that the bipolar orientation of $\mathcal{C}^1$ defines unique orientation extrema on the boundary circle $\partial c_v$ of the 2-cell $c_v$.
Let $w_-^0$ be the barycenter on $\partial c_v$ of the edge to the right of the minimum, and $w_+^0$ the edge barycenter to the left of the maximum.
Similarly, let $w_-^1$ be the edge barycenter to the left of the minimum, and $w_+^1$ to the right of the maximum.
See fig.~\ref{fig:2.1}.

\begin{figure}[t!]
\centering \includegraphics[width=\textwidth]{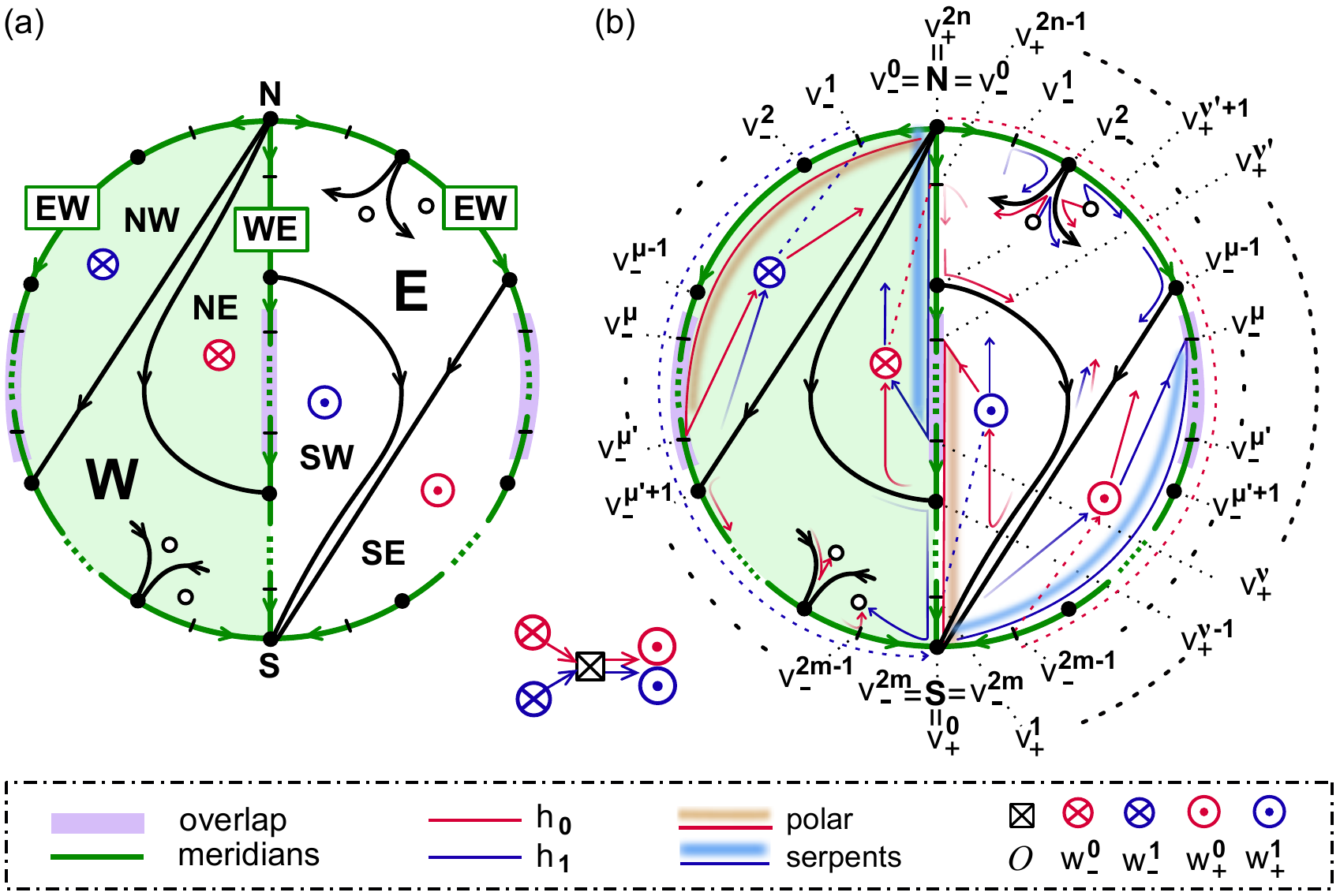}
\caption{\emph{
The SZS-pair $(h_0,h_1)$ in a 3-cell template $\mathcal{C}$, with poles $\mathbf{N}, \mathbf{S}$, hemispheres $\mathbf{W}, \mathbf{E}$  and meridians $\mathbf{EW}, \mathbf{WE}$.
Left, (a): schematics of the 3-cell template, as in fig.~\ref{fig:1.4}.
Right, (b): schematics of the SZS-pair.
Dashed lines indicate the $h_\iota$-ordering of vertices in the closed hemisphere, when $\mathcal{O}$ and the other hemisphere are ignored, according to definition~\ref{def:2.5}(i).
The actual paths $h_\iota$ tunnel, from $w_ -^\iota \in  \mathbf{W}$ through the 3-cell barycenter $\mathcal{O}$, and re-emerge at $w_+^\iota \in  \mathbf{E}$, respectively.
Note the boundary overlap of the faces $\mathbf{NW}, \mathbf{SE}$ of $w_-^1, w_+^0$ from $v_-^{\mu-1}$ to $v_-^{\mu ' +1}$ on the $\mathbf{EW}$ meridian.
Similarly, the boundaries of the faces $\mathbf{NE}, \mathbf{SW}$ of $w_-^0, w_+^1$ overlap from $v_+^{\nu -1}$ to $v_+^{\nu ' +1}$ along $\mathbf{WE}$.
}}
\label{fig:2.2}
\end{figure}

\begin{defi}\label{def:2.4}
The paths of labeling bijections $h_0, h_1$: $\lbrace 1, \ldots , N \rbrace \rightarrow \mathcal{E}$ are called a \emph{ZS-pair} $(h_0, h_1)$ in the finite, regular, planar and bipolar cell complex $\mathcal{C} = \bigcup_{v \in \mathcal{E}} c_v$ if the following three conditions all hold true:

\begin{itemize}
\item[(i)] $h_0$ traverses any face $c_v$ as $\ldots w_-^0 vw_+^0 \ldots$
\item[(ii)] $h_1$ traverses any face $c_v$ as $\ldots w_-^1 vw_+^1 \ldots$
\item[(iii)] both $h_\iota$ follow the bipolar orientation of the 1-skeleton $\mathcal{C}^1$, if not already defined by (i), (ii).
\end{itemize}

We call $(h_0,h_1)$ an \emph{SZ-pair}, if $(h_1, h_0)$ is a ZS-pair, i.e. if the roles of $h_0$ and $h_1$ in the rules (i) and (ii) of the face traversals are reversed.
\end{defi}

This definition enters the variant of unique SZS-Pairs $(h_0,h_1)$, \cite[definition~5.1]{firo3d-1}, associated to 3-cell templates, as follows.
See fig.~\ref{fig:2.2} for an illustration.

\begin{defi}\label{def:2.5}
Let $\mathcal{C} = \bigcup_{v \in \mathcal{E}} c_v$ be a 3-cell template with oriented 1-skeleton $\mathcal{C}^1$, poles $\mathbf{N}, \mathbf{S}$, hemispheres $\mathbf{W}, \mathbf{E}$, and meridians $\mathbf{EW}$, $\mathbf{WE}$.
A path pair $(h_0, h_1)$ of labeling bijections $h_\iota$: $ \lbrace 1, \ldots , N \rbrace \rightarrow \mathcal{E}$ is called the SZS-\emph{pair assigned to} $\mathcal{C}$ if the following two conditions hold.
\begin{itemize}
\item[(i)] The restrictions of range $h_\iota$ to $\text{clos } \mathbf{W}$ form an SZ-pair $(h_0, h_1)$, in the closed Western hemisphere.
The analogous restrictions form a ZS-pair $(h_0,h_1)$ in the closed Eastern hemisphere $\text{clos } \mathbf{E}$.
See definition~\ref{def:2.4}.
\item[(ii)] In the notation of fig.~\ref{fig:2.2}, and for each $\iota =0,1$, the paths $h_\iota$ traverse $\mathcal{O}$ in the orders $\ldots w_-^\iota \mathcal{O} w_+^\iota \ldots$, respectively.
\end{itemize}
\end{defi}

In \cite[theorem~5.2]{firo3d-1} we have show that the permutation
	\begin{equation}
	\sigma:= h_0^{-1} \circ h_1
	\label{eq:2.20}
	\end{equation}
associated to the SZS-pair $(h_0,h_1)$ of any 3-cell template $\mathcal{C}$ is a Sturm meander, i.e. $\sigma$ is a dissipative Morse meander in the sense of \cite{firo96}.
In particular there exists a dissipative nonlinearity $f$ with hyperbolic equilibria in \eqref{eq:1.1}, such that the Sturm permutation $\sigma_f$ coincides with the formal permutation $\sigma$ associated to (the SZS-pair $(h_0,h_1)$ of) the arbitrarily prescribed 3-cell template $\mathcal{C}$:
	\begin{equation}
	\sigma_f = \sigma \,.
	\label{eq:2.21}
	\end{equation}
Moreover, $\sigma_f$ comes with the associated Sturm global attractor $\mathcal{A}_f$, equilibria $\mathcal{E}_f$ and the Thom-Smale regular cell complex $\mathcal{C}_f = \bigcup_{v\in \mathcal{E}_f} c_v$, $c_v = W^u(v)$; see \eqref{eq:1.7a}.

Roughly speaking our main theorem~\ref{thm:1.2} claims
	\begin{equation}
	\mathcal{C}_f = \mathcal{C}\,,
	\label{eq:2.22}
	\end{equation}
by a cell preserving homeomorphism \eqref{eq:hom}.
To refine this statement, in view of the signed hemisphere decompositions \eqref{eq:2.3}, \eqref{eq:2.10} of the equilibria $\mathcal{E}_f$ into $\mathcal{E}_\pm^j(v)$, and of the sphere boundaries $\Sigma^j(v) = \partial W^{j+1}(v)$ into signed hemispheres $\Sigma_\pm^j(v)$, we now define a \emph{formal hemisphere decomposition} $\mathcal{C}^s$ on any 3-cell template $\mathcal{C}$.
Let $c_v$ denote any cell of $\mathcal{C}$, with dimension $i_v = \dim c_v >0$.
If $i_v=3$, i.e. for $v=\mathcal{O}$, we define the formal hemispheres $S_\pm^j(\mathcal{O})$, $j=0, \ldots ,3$, analogously to the hemispheres $\Sigma_\pm^j(\mathcal{O})$ in the translation table \eqref{eq:1.15}.
If $i_v=1$, i.e. for edge saddles $v$, we define $S_+^0(v)$ as the head vertex and $S_-^0(v)$ as the tail vertex of the edge $c_v$ under the bipolar orientation of the 1-skeleton $\mathcal{C}^1$.
For $i_v=2$ faces, we define $S_-^0(v)$ as the max and $S_+^0(v)$ as the min vertex on the circle boundary $\partial c_v \subseteq \mathcal{C}^1$, under its (downward) bipolar orientation; see fig.~\ref{fig:2.1}.
For face sources $v\in \mathbf{E} =: S_+^2(\mathcal{O})$, we define the remaining right part of the boundary $\partial c_v$ as $S_-^1(v)$, and the left part as $S_+^1(v)$.
For $v\in \mathbf{W} =: S_-^2(\mathcal{O})$, we flip these sides of $S_\pm^1(v)$, so that $S_-^1(v)$ is left and $S_+^1(v)$ right.
In summary, the formal hemisphere decomposition $\mathcal{C}^s$ of $\mathcal{C}$ consists of $\mathcal{C}$ itself, together with the sign information on
	\begin{equation}
	c_w\subseteq S_\pm^j(v)\,, \quad \text{for any} \quad
	c_w \subseteq \partial c_v\,, \, 0\leq j<i_v\,.
	\label{eq:2.23}
	\end{equation}

In other words, definition~\ref{def:1.1}(ii) of bipolarity and meridians in a 3-cell template $\mathcal{C}$ is equivalent to the definition of a formal hemisphere decomposition $\mathcal{C}^s$.
The translation table for the hemispheres $S_\pm^j(\mathcal{O})$ is completely analogous to \eqref{eq:1.15} with the identification
	\begin{equation}
	S_\pm^j(\mathcal{O}) = \Sigma_\pm^j(\mathcal{O})\,,
	\label{eq:2.23a}
	\end{equation}		
$0 \leq j \leq 2$.
In particular \cite[theorem~4.1]{firo3d-1} has already identified the dynamic Sturm complex $\mathcal{C}_f$ associated to any signed 2-hemisphere template $\mathcal{E}_\pm^j(v)$, $v\in \mathcal{E}_f$, as a 3-cell template $\mathcal{C}^s$ with formal hemisphere decomposition $S_\pm^j(v)$ given by bipolarity, the meridians, and the identification \eqref{eq:2.23a}.
The following theorem addresses the converse of this construction.

\begin{thm}\label{thm:2.6}
Let $\mathcal{C}^s = \bigcup_{v\in \mathcal{E}}$ be a 3-cell template with associated formal hemisphere decomposition $S_\pm^j(v)$ as in \eqref{eq:2.23} above.
Let $\mathcal{C}_f$ be the Sturm dynamic complex \eqref{eq:1.7a} associated to $\mathcal{C}^s$, by the above construction \eqref{eq:2.20}, \eqref{eq:2.21} of an SZS-pair $(h_0,h_1)$.
Let $\Sigma_\pm^j(v)$, $v \in \mathcal{E}_f$, $0\leq j<i(v)$ be the signed hemisphere decomposition \eqref{eq:2.5} on $\mathcal{C}_f$.

Then there exists a cell-preserving \emph{homeomorphism}
	\begin{equation}
	\Phi^s: \qquad \mathcal{C}^s = \bigcup\limits_{v \in \mathcal{E}} c_v
	\quad \longrightarrow \quad 
	\bigcup\limits_{v \in \mathcal{E}_f} W^u(v)
	= \mathcal{C}_f^s = \mathcal{A}_f
	\label{eq:homs}
	\end{equation}
with $\Phi^s (c_v) = W^u(\Phi^s(v))$.
Moreover $\Phi^s$ is \emph{signed}, i.e. $\Phi^s$ also preserves the signed hemisphere structure
	\begin{equation}
	\Phi^s\left(S_\delta^j(v)\right) =
	\Sigma_\delta^j\left(\Phi^s(v)\right)\,,
	\label{eq:2.24}
	\end{equation}
for all $v\in \mathcal{E}$, $0\leq j<i_v$, and $\delta = \pm$.

In short the SZS-pair $(h_0,h_1)$ designs a Sturm global attractor $\mathcal{A}_f$ such that the Thom-Smale complex $\mathcal{C}_f^s$ coincides with the given 3-cell template $\mathcal{C}^s$, including the signed hemisphere structure.
\end{thm}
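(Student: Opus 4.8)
The plan is to establish Theorem~\ref{thm:2.6} as a consequence of the crucial identity \eqref{eq:1.22}, namely that the abstract SZS-pair $(h_0,h_1)$ assigned to the prescribed 3-cell template $\mathcal{C}^s$ coincides, after relabeling, with the Sturm boundary orders $(h_0^f,h_1^f)$ of the attractor $\mathcal{A}_f$ produced by the construction \eqref{eq:2.20}, \eqref{eq:2.21}. Since $\sigma_f = \sigma = h_0^{-1}\circ h_1$ by construction, and the Sturm permutation determines $\mathcal{A}_f$ up to $C^0$ orbit equivalence by \cite{firo00}, the content is really to match the \emph{cell structure} and the \emph{signed hemisphere structure}, not merely the heteroclinic data. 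First I would reduce to the combinatorial statement: prove \eqref{eq:1.22} for the scooped paths $\check h_\iota$ and the hemisphere-restricted paths $h_\iota^\pm$, which is exactly what is announced for lemma~\ref{lem:6.1}. Granting this, Proposition~\ref{prop:2.3} gives the template table for how $h_\iota^f$ traverses each cell $c_v = W^u(v)$ of $\mathcal{C}_f$ in terms of its signed hemisphere decomposition $\Sigma_\pm^j(v)$, while Definitions~\ref{def:2.4} and \ref{def:2.5} prescribe, in identical form, how the abstract SZS-pair traverses each cell $c_v$ of $\mathcal{C}^s$ in terms of its formal hemisphere decomposition $S_\pm^j(v)$. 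Comparing these two descriptions cell by cell produces the required correspondence.

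The key steps, in order, are: (1) invoke \cite[theorem~4.1]{firo3d-1} to know that the dynamic complex of \emph{any} Sturm 3-ball is a 3-cell template whose formal hemisphere decomposition agrees with its signed one under \eqref{eq:2.23a}; (2) invoke theorem~\ref{thm:5.1} (proved in the later sections via the scoop-of-noses construction) to know that the attractor $\mathcal{A}_f$ built from the SZS-pair of $\mathcal{C}^s$ \emph{is} a Sturm 3-ball, so that step (1) applies to $\mathcal{C}_f$; (3) observe that both $\mathcal{C}^s$ and $\mathcal{C}_f^s$ are 3-cell templates carrying the \emph{same} Sturm permutation $\sigma_f = \sigma$, hence the same Morse numbers $i_v = i(v)$ by \eqref{eq:1.22b}, the same zero-number data $z_{wv}$ by \eqref{eq:2.3c}--\eqref{eq:2.3d}, and therefore the same signed connection graph $\mathcal{H}^s$; (4) use the equivalence \eqref{eq:2.4} between heteroclinic incidence and face incidence to conclude that the two regular cell complexes $\mathcal{C}^s$ and $\mathcal{C}_f^s$ have isomorphic face posets, with matching dimensions and matching signs $c_w\subseteq S_\pm^j(v)$ versus $c_w\subseteq \Sigma_\pm^j(v)$; (5) upgrade the abstract poset isomorphism to a genuine cell-preserving homeomorphism $\Phi^s$. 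For step (5) I would proceed skeleton by skeleton: on $0$-cells $\Phi^s$ is the label bijection; on $1$-cells one matches edges with their bipolar orientations (which agree because $S_\pm^0$ matches $\Sigma_\pm^0$ on each saddle); on $2$-cells one builds the homeomorphism on each face circle from the boundary data and extends over the disk, using that both faces carry compatible min/max and left/right ($S_\pm^1$ versus $\Sigma_\pm^1$) decompositions; finally the single $3$-cell $c_{\mathcal{O}}$ is a closed $3$-ball by the Schoenflies result \cite{firo13}, and the homeomorphism of its boundary $2$-sphere extends over the ball, preserving the polar/meridian/hemisphere labels $S_\pm^j(\mathcal{O}) = \Sigma_\pm^j(\mathcal{O})$. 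The signed statement \eqref{eq:2.24} is then automatic since at every stage the sign labels were used as matching data.

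The main obstacle I expect is step~(3)--(4): proving that the abstract combinatorial SZS-pair of $\mathcal{C}^s$ really does reproduce, after pushing through the nonlinearity $f$ furnished by \cite{firo96}, the \emph{actual} boundary orders $h_\iota^f$ of the equilibria of $\mathcal{A}_f$ — i.e. the identity \eqref{eq:1.22} at the level of the scoops $\check h_\iota$ and the hemisphere paths $h_\iota^\pm$. The subtlety is that the construction \eqref{eq:2.20} only guarantees $h_0^{-1}\circ h_1 = \sigma_f$ as \emph{permutations}; a priori the Sturm attractor could realize this permutation with its boundary orders related to the abstract $h_\iota$ by some nontrivial relabeling, in which case the cell-by-cell traversal patterns of Proposition~\ref{prop:2.3} need not line up edge-for-edge with Definition~\ref{def:2.5}. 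Resolving this requires the inductive argument of section~\ref{sec7} (via lemma~\ref{lem:6.1}): one shows that the blocking-of-heteroclinics technique used in the proof of Proposition~\ref{prop:2.3}, applied now inside each closed hemisphere of $\mathcal{C}_f$ after the formal scoop of noses (definition~\ref{def:4.2}), forces the Sturm boundary orders to traverse each cell exactly in the ZS/SZ pattern that the abstract SZS-pair was \emph{defined} to have — so that $h_\iota^f$ and the abstract $h_\iota$ agree on the nose, not just up to conjugacy. Once this rigidity is in hand, steps (1), (2), (5) are comparatively routine applications of Schoenflies and standard cell-complex surgery, and the signed homeomorphism $\Phi^s$, together with the ``in short'' conclusion that $\mathcal{C}_f^s$ coincides with the prescribed $\mathcal{C}^s$, follows.
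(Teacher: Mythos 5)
Your proposal follows essentially the same route as the paper: identify the abstract SZS-pair with the Sturm boundary orders via $\sigma_f=\sigma$, reduce to the two closed hemispheres by the scoop of noses and lemma~\ref{lem:6.1}, invoke the planar signed realization (corollary~\ref{cor:3.2}) on each closed hemisphere, and extend radially over the 3-cell after gluing along the shared meridian. The only notable difference is presentational: the paper performs the cell-and-sign matching hemisphere-by-hemisphere through corollary~\ref{cor:3.2}(i) and lemma~\ref{lem:6.2} rather than through your direct face-poset comparison via the signed connection graph (which, as literally stated in your steps (3)--(4), would be circular), and lemma~\ref{lem:6.1} itself is proved by a purely combinatorial comparison of the scooped paths $\check{h}_\iota$ with the hemisphere ZS-pairs $h_\iota^\pm$ using fullness of the polar serpents, not by a blocking argument --- but you correctly isolated that identity as the crux and treated the remaining extension steps as routine, which matches the paper.
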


Along the proof of the signed realization theorem~\ref{thm:2.6}, we can also settle the longstanding puzzle on different, not even conjugate, Sturm permutations $\sigma_f, \sigma_g$ with apparently equivalent Sturm attractors $\mathcal{A}_f=\mathcal{A}_g$ -- at least for Sturm 3-balls, and hence also for planar attractors.

\begin{thm}\label{thm:2.7}
Let $\mathcal{C}_f$ and $\mathcal{C}_g$ be two Sturm 3-ball dynamic complexes, alias 3-cell templates.
Assume there exists a cell-preserving homeomorphism
	\begin{equation}
	\Phi^s: \qquad \mathcal{A}_f=\mathcal{C}_f^s 
	= \bigcup\limits_{v \in \mathcal{E}_f} W_f^u(v)
	\quad \longrightarrow \quad 
	\bigcup\limits_{v \in \mathcal{E}_g} W_g^u(v)
	= \mathcal{C}_g^s = \mathcal{A}_g\,,
	\label{eq:homsfg}
	\end{equation}
with $\Phi^s(W_f^u(v)) = W_g^u(\Phi^s(v))$.
Assume $\Phi^s$ is \emph{signed}, i.e. $\Phi^s$ also preserves the signed hemisphere decompositions
	\begin{equation}
	\Phi^s\left(\Sigma_\delta^j(v)\right) =
	\Sigma_\delta^j\left(\Phi^s(v)\right)\,,
	\label{eq:2.25}
	\end{equation}
for all $v\in \mathcal{E}_f$, $0\leq j<i(v)$, and $\delta = \pm$.

Then the Sturm permutations of $f$ and $g$ coincide:
	\begin{equation}
	\sigma_f = \sigma_g\,.
	\label{eq:2.26}
	\end{equation}
Moreover, $\Phi^s$ can be chosen to respect all fast unstable manifolds,
	\begin{equation}
	\Phi^s\left(W_f^{j+1}(v)\right)=
	W_g^{j+1}\left(\Phi^s(v)\right)\,,
	\label{eq:2.27}
	\end{equation}
$0 \leq j<i(v)$, together with their signed versions.
\end{thm}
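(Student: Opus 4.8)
The plan is to deduce \eqref{eq:2.26} from the single fact that the hypothesised \emph{signed} homeomorphism $\Phi^s$ automatically intertwines the boundary orders, $h_\iota^g = \Phi^s \circ h_\iota^f$ for $\iota=0,1$; here I also write $\Phi^s\colon\mathcal{E}_f\to\mathcal{E}_g$ for the bijection of barycenters with $\Phi^s(c_v)=c_{\Phi^s(v)}$. Given this intertwining, \eqref{eq:2.26} is immediate:
\[
\sigma_g=(h_0^g)^{-1}\circ h_1^g=(h_0^f)^{-1}\circ(\Phi^s)^{-1}\circ\Phi^s\circ h_1^f=(h_0^f)^{-1}\circ h_1^f=\sigma_f\,.
\]

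To obtain the intertwining I would argue that $\Phi^s$ is, combinatorially, an isomorphism of \emph{signed} $3$-cell templates. Being cell-preserving it respects cells and their dimensions; being signed it respects every hemisphere $\Sigma_\pm^j(v)$ by \eqref{eq:2.25}, hence --- via the translation table \eqref{eq:1.15} and its edge/face analogues $S_\pm^0,S_\pm^1$ in \eqref{eq:2.23} --- the poles $\mathbf N,\mathbf S$, the meridians $\mathbf{EW},\mathbf{WE}$, the hemispheres $\mathbf W,\mathbf E$, the bipolar orientation of $\mathcal C^1$, the extreme vertices of each face boundary, and the left/right splitting of that boundary. Thus $\Phi^s$ preserves precisely the data that enters definitions~\ref{def:2.4} and \ref{def:2.5}, and since those definitions pin down the SZS-pair $(h_0,h_1)$ uniquely from that data (cf.\ \eqref{eq:2.20}, \eqref{eq:2.21} and \cite[theorem~5.2]{firo3d-1}), it follows that $\Phi^s$ carries the SZS-pair of $\mathcal C_f^s$ onto the SZS-pair of $\mathcal C_g^s$. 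Combined with the crucial identity $h_\iota^f=h_\iota$ of \eqref{eq:1.22} (lemma~\ref{lem:6.1}), applicable to both $f$ and $g$ because $\mathcal C_f,\mathcal C_g$ are $3$-cell templates, this gives $h_\iota^g=\Phi^s\circ h_\iota^f$ and hence \eqref{eq:2.26}. The substantive input here is the crucial identity itself; its local heart is the cell-traversal table of proposition~\ref{prop:2.3}, to which one must still add uniqueness of the SZS-pair and the observation that the stretches of $h_\iota^f$ not governed by proposition~\ref{prop:2.3} follow the bipolar order of $\mathcal C^1$.

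For the refinement \eqref{eq:2.27} I would not insist on the original $\Phi^s$ but adjust it cell by cell. Since $\Phi^s$ is signed it already maps $\partial W_f^{j+1}(v)=\bigcup_{0\le i\le j}\Sigma_\pm^i(v)$ onto $\partial W_g^{j+1}(\Phi^s(v))$ by \eqref{eq:1.14}, so only cell interiors need work. Inducting over the skeleta of $\mathcal C_f$, and having arranged agreement on the $(m-1)$-skeleton, I would extend $\Phi^s$ across each closed $m$-cell $\overline{c_v}$ by a homeomorphism agreeing with the already-fixed map on $\partial c_v$ that in addition carries the nested central sub-balls $\overline{W_f^{1}(v)}\subseteq\cdots\subseteq\overline{W_f^{i(v)}(v)}=\overline{c_v}$ onto their $g$-counterparts. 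The combinatorial incidence patterns of these sub-balls coincide on the two sides once $\sigma_f=\sigma_g$, being read off the zero numbers $z(w-v)$ (equivalently, off the already-matched signed hemisphere data), and the existence of the extension rests on the Schoenflies theorem of \cite{firo13}: each $\overline{W^{j+1}(v)}$ is a flatly embedded ball with boundary sphere $\Sigma^j(v)\subseteq\partial c_v$, and a flat configuration of such nested central balls inside an $m$-ball with $m\le 3$ is unique up to homeomorphism rel its prescribed boundary behavior. The resulting map stays cell-preserving and signed, so it also respects the signed versions of the $W^{j+1}$.

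I expect the main obstacle to lie in this last step: converting matching \emph{boundary} data into a genuine homeomorphism of the interior ball pairs $\overline{W^{j}(v)}\subseteq\overline{W^{j+1}(v)}$. The required flatness and rel-boundary uniqueness are low-dimensional ($m\le 3$) and should be clean consequences of \cite{firo13} together with standard collaring, but the bookkeeping --- threading the boundary identifications coherently through all skeleta while keeping $\Phi^s$ cell-preserving and signed --- is the delicate part; by contrast the equality $\sigma_f=\sigma_g$ is a comparatively soft consequence of the rigidity of SZS-pairs described above.
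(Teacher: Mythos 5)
Your proposal follows essentially the same route as the paper: the equality $\sigma_f=\sigma_g$ is reduced to the intertwining $h^g_\iota=\Phi^s\circ h^f_\iota$, which the paper likewise obtains from the cell-traversal table of proposition~\ref{prop:2.3} (the matching of the $w(\mathbf{s})$ under the signed hemisphere data, i.e.\ exactly the rigidity of the SZS-pair you describe), and the refinement \eqref{eq:2.27} is likewise obtained in the paper by replacing the radial extension in the skeleton-by-skeleton construction of $\Phi^s$ with the Schoenflies extensions of \cite{firo13} that carry standard (half-)eigenspaces onto the nested (signed) fast unstable manifolds. The only cosmetic difference is that the paper builds the fast-unstable-manifold-respecting map as $\Phi^s_g\circ(\Phi^s_f)^{-1}$ through the common abstract template $\mathcal{C}^s$ rather than directly between $\mathcal{C}^s_f$ and $\mathcal{C}^s_g$.
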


For an example we refer to \cite[(5.6) and fig.~5.2]{firo3d-1}.
Any cell-preserving homeomorphism $\Phi$, in that example, would have to interchange the respective 2-hemispheres of $\mathcal{O}$,
	\begin{equation}
	\Phi: \quad \Sigma_{f,\pm}^2(\mathcal{O}_f) 
	\quad \longrightarrow\quad
	\Sigma_{g, \mp}^2(\mathcal{O}_g)\,.
	\label{eq:2.28}
	\end{equation}
This accounts for different Sturm permutations $\sigma_f \neq \sigma_g$, which are not trivially conjugate either.
See also \cite[fig.~4.6]{firo3d-3}.

%%%%%%%%%%%%%%%%%%%%%%%%%%%%%%%%%%%%%%%%%%%%%%%%%%%%%%%%%%%

\section{Planar Sturm attractors}
\label{sec3}

As a prelude to the proof of theorem~\ref{thm:2.6} for 3-ball Sturm global attractors we recall the case of planar disks, in theorem~\ref{thm:3.1}.
See \cite[section~2]{firo3d-1} for details.
A central construction, in definition~\ref{def:2.4} above, assigns a ZS-Hamiltonian pair of paths $h_0, h_1$: $\lbrace 1, \ldots , N \rbrace \rightarrow \mathcal{E}$ through the vertices $v\in\mathcal{E}$ of the cells $c_v$ of a prescribed planar bipolar cell complex $\mathcal{C}$.
The construction of $h_0, h_1$ ensures that the permutation $\sigma$:= $h_0^{-1} \circ h_1 \in S_N$ is Sturm, $\sigma = \sigma_f$ and hence defines a Sturm meander $\mathcal{M}_f$.
Moreover, the associated Sturm global attractor $\mathcal{A}_f$ is planar with Thom-Smale cell complex $\mathcal{C}_f$ as prescribed by $\mathcal{C}$.
See theorem~\ref{thm:3.1}.
We then refine the analysis of the cell complex equality $\mathcal{C}_f = \mathcal{C}$, in the planar case.
In fact $\mathcal{C}_f = \mathcal{C}$ is understood in terms of a cell-to-cell homeomorphism $\Phi$: $\mathcal{C}\rightarrow \mathcal{C}_f$.
We refine this to a signed homeomorphism $\Phi^s$: $\mathcal{C}^s \rightarrow \mathcal{C}_f^s$ between signed cell complexes.
In other words, $\Phi^s(S_\delta^j(v)) = \Sigma_\delta^j(\Phi^s(v))$ maps corresponding hemispheres of $\mathcal{C}^s$ and $\mathcal{C}_f^s$ onto each other, for all equilibria $v$, signs $\delta = \pm$, and dimensions $0 \leq j<i(v)$; see \eqref{eq:3.2} and corollary~\ref{cor:3.2}.
In particular we show how the disk orientations of the planar embedding $\mathcal{C} \subseteq \mathbb{R}^2$, together with the bipolar orientation of the 1-skeleton $\mathcal{C}^1$, already fix a signed hemisphere structure of $\mathcal{C}^s$, and hence determine the boundary orders $h_\iota^f = h_\iota$ and the Sturm permutation $\sigma_f= h_0^{-1} h_1$ uniquely.
See \eqref{eq:3.8}--\eqref{eq:3.12}.
For a topological disk $\mathcal{C}$, we recall how the remaining freedom of sign choices when passing to $\mathcal{C}^s$ amounts to trivially equivalent global attractors $\mathcal{A}_f = \mathcal{C}_f$, under $x \mapsto 1-x$ and $u \mapsto-u$, once the target sink equilibria of the one-dimensional fast unstable manifolds $W^1(v)$ have been fixed, for all $i=2$ source equilibria $v$.

\begin{figure}[t!]
\centering \includegraphics[width=\textwidth]{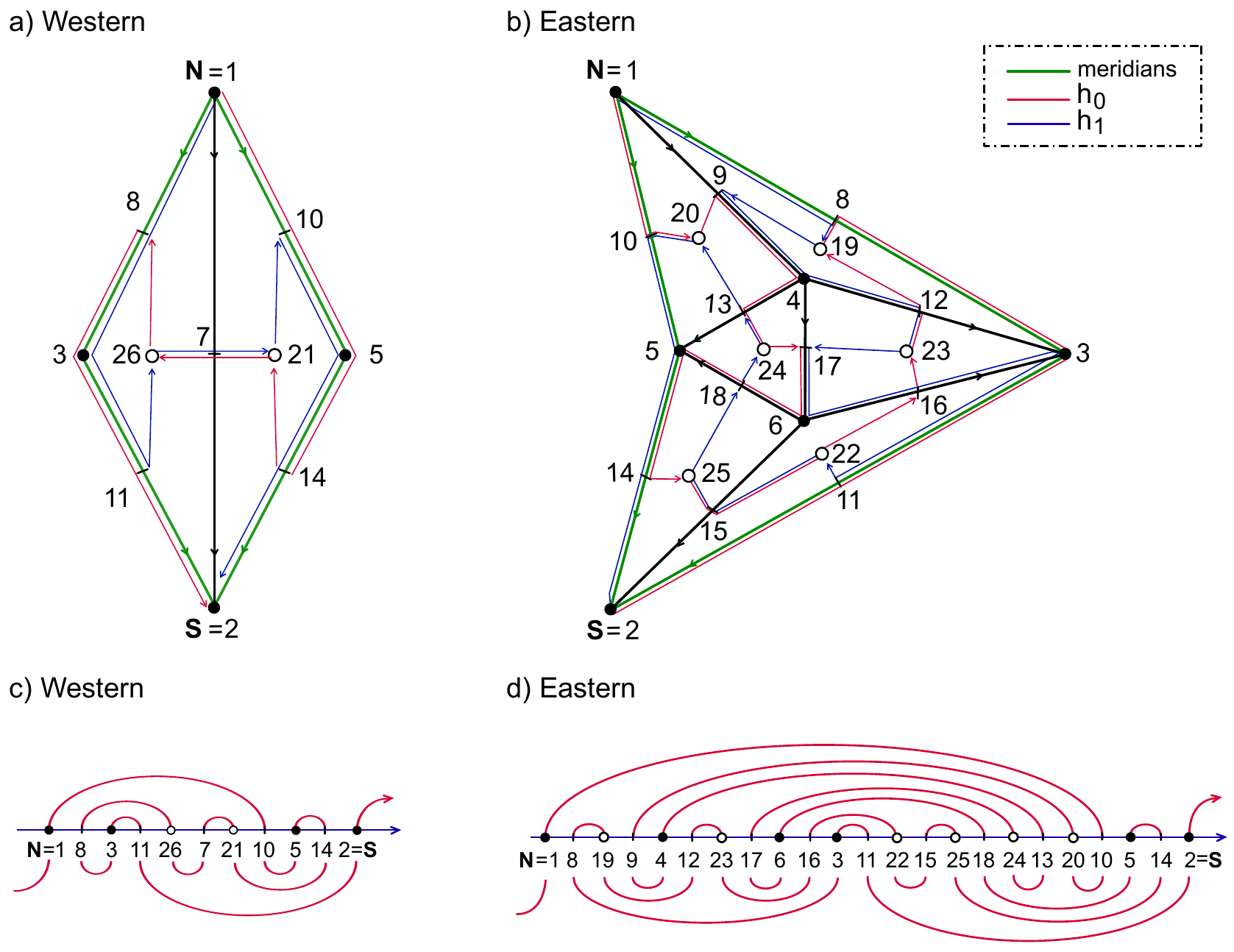}
\caption{\emph{
The closed hemisphere Thom-Smale complexes, alias topological disk attractors, of the 3-cell octahedron from fig.~\ref{fig:1.3}.
Western hemisphere (a), and Eastern hemisphere (b).
See (c), (d) for the associated meanders, respectively.
Vertex annotations, viz. equilibrium labels, correspond to fig.\ref{fig:1.3}.
}}
\label{fig:3.1}
\end{figure}

We first consider planar Sturm global attractors $\mathcal{A}_f$ and complexes $\mathcal{C}$ which are \emph{topological disks}.
By this we mean that $\mathcal{A}_f$, $\mathcal{C}$ are allowed to contain several sources of Morse index $i=2$, alias faces, but $\mathcal{A}_f$, $ \mathcal{C}$ are homeomorphic to the standard closed disk.
We recall definition~\ref{def:2.1} of the signed hemisphere template $\mathcal{E}_\pm^j(v)$ of $\mathcal{A}_f$, according to equilibria in the hemisphere decomposition $\Sigma_\pm^j(v)$ of $\partial W^u(v)$, for all equilibria $v \in \mathcal{E}_f$ and $0 \leq j < i(v)$.
In \cite[theorem~2.4]{firo3d-1} we proved the following theorem.

\begin{thm}\label{thm:3.1}
\begin{itemize}
\item [] \phantom{linebreak}
\item[(i)] Let $(h_0,h_1)$ be the ZS-pair of a given planar bipolar topological disk complex $\mathcal{C} \subseteq \mathbb{R}^2$ with poles $\mathbf{N}$, $\mathbf{S}$ on the circular boundary of $\mathcal{C}$.
Then the Sturm permutation $\sigma_f = \sigma$:= $h_0^{-1}h_1$ defines a topological disk Sturm global attractor $\mathcal{A}_f$ with dynamic complex $\mathcal{C}_f=\mathcal{C}$, and hence a unique signed hemisphere template $\mathcal{E}_\pm^j(v)$.
\item[(ii)] Conversely, let $\mathcal{E}_\pm^j(v)$ be the signed hemisphere template of a given planar Sturm global attractor $\mathcal{A}_f$.
Then $\mathcal{E}_\pm^j(v)$ defines a unique bipolar orientation of the planar Thom-Smale complex $\mathcal{C}_f$ of $\mathcal{A}_f$, and hence a unique ZS-pair $h_\iota$:= $h_\iota^f$, $\iota = 0,1$.
\end{itemize}
\end{thm}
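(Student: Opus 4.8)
The plan is to reduce the unsigned core of Theorem~\ref{thm:3.1} to \cite[theorem~2.4]{firo3d-1} and recall its proof, and then to read off the signed refinement. For part~(i) I would start from the ZS-pair $(h_0,h_1)$ of Definition~\ref{def:2.4}, which is read off from the bipolar orientation and the planar embedding of $\mathcal{C}$, and verify that $\sigma:=h_0^{-1}h_1$ is a dissipative Morse meander. Dissipativity $\sigma(1)=1$, $\sigma(N)=N$ holds because the poles $\mathbf{N}=h_0(1)=h_1(1)$ and $\mathbf{S}=h_0(N)=h_1(N)$ are the only orientation extrema of $\mathcal{C}^1$ on $\partial\mathcal{C}$; connectedness of the disk $\mathcal{C}$ makes the associated arch curve a single Jordan meander; and an induction along the meander, using the face-traversal rule $\ldots w_-^0 v w_+^0 \ldots$ of Definition~\ref{def:2.4}(i) together with rule~(iii), keeps the Morse numbers $i_v$ nonnegative, with $i_v=0,1,2$ at sinks, non-extremal edges, and faces respectively. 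This is exactly the combinatorial bookkeeping of \cite{firo09} and \cite[\S2]{firo3d-1}. By the realization theorem \cite{firo96}, $\sigma=\sigma_f$ for some dissipative $f$ with hyperbolic equilibria; since the maximal Morse index is $2$, the attractor $\mathcal{A}_f$ is planar \cite{br90,jo89,ro91}, and by \cite{firo13,firo14} its Thom--Smale decomposition $\mathcal{C}_f=\bigcup_v W^u(v)$ is a finite regular cell complex.

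Next I would identify $\mathcal{C}_f$ with $\mathcal{C}$. By Morse--Smale and \eqref{eq:2.4}, the incidence relation of $\mathcal{C}_f$ is $c_w\subseteq\partial c_v\Leftrightarrow v\leadsto w$, and the heteroclinic connections are recovered from $\sigma_f=\sigma$ by the $z$-blocking criterion of \cite{firo96}; on the other hand the ZS-pair construction attaches to $\mathcal{C}$ precisely the same combinatorial incidence (and planar) data. Hence there is a dimension-preserving bijection $\mathcal{E}\to\mathcal{E}_f$ respecting incidence, which, both complexes being finite and regular, extends to a cell-preserving homeomorphism $\Phi\colon\mathcal{C}\to\mathcal{C}_f$; the topological-disk property passes from $\mathcal{C}$ to $\mathcal{A}_f$. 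This already determines the signed hemisphere template $\mathcal{E}_\pm^j(v)$ of $\mathcal{A}_f$ via \eqref{eq:2.3}, and, anticipating theorem~\ref{thm:2.6}, one checks that after composing with an isotopy supported in each cell $\Phi$ becomes \emph{signed}: for $j=0$ this is immediate since on each edge $S_\pm^0(v)$ are head and tail and $\Sigma_\pm^0(\Phi v)$ are the boundary sinks $w$ with $\pm(w-\Phi v)>0$ at $x=0$, which agree by the way Definition~\ref{def:2.4}(iii) aligns the bipolar orientation with the local $h_0$-order; and for a face source $v$ the traversals $\ldots w(+-)\,v\,w(-+)\ldots$ of $h_0$ and $\ldots w(++)\,v\,w(--)\ldots$ of $h_1$ from Proposition~\ref{prop:2.3}, translated through the barycenters $w_\pm^\iota$ of Definition~\ref{def:2.4}, pin the two arcs of $\partial c_v$ down as $\Sigma_\pm^1(\Phi v)$ in exactly the pattern prescribed for $S_\pm^1(v)$ (with the West/East flip of sides built into \eqref{eq:2.23}).

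For the converse part~(ii) I would run this backwards: given the signed hemisphere template $\mathcal{E}_\pm^j(v)$ of a planar Sturm attractor $\mathcal{A}_f$, orient each edge $c_v$ of $\mathcal{C}_f$ from the sink $\Sigma_-^0(v)$ toward the sink $\Sigma_+^0(v)$. This orientation has no directed cycle, because the zero number $z(\,\cdot-\mathbf{N})$ is strictly monotone along directed di-paths of the connection graph -- a cycle would force an impossible drop of $z$ at a Neumann boundary, as in the proof of Proposition~\ref{prop:2.3} -- and $\mathbf{N}$, $\mathbf{S}$ are its only orientation extrema, so it is bipolar; together with the planar embedding of $\mathcal{C}_f$, Definition~\ref{def:2.4} then produces a unique ZS-pair $(h_0,h_1)$, which equals $(h_0^f,h_1^f)$ by consistency with part~(i) (both give $\sigma_f=h_0^{-1}h_1$). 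Uniqueness is then clear. The residual freedom in the sign choices, when one does not also fix the target sinks of the $W^1(v)$, is precisely the group of four trivial equivalences generated by $x\mapsto1-x$ and $u\mapsto-u$; compare the discussion around \eqref{eq:3.8}--\eqref{eq:3.12} and corollary~\ref{cor:3.2}.

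The main obstacle I anticipate is not the realization step -- that is essentially \cite{firo96} plus the Morse-nonnegativity induction already carried out in \cite{firo3d-1} -- but the sign bookkeeping in the signed upgrade: one has to match the purely planar ``left versus right'' convention of the ZS-pair against the dynamical sign of the eigenfunction $\varphi_j$ (normalized by $\varphi_j(0)>0$) that defines the hemispheres $\Sigma_\pm^j$, keeping simultaneous track of the orientation of $\mathbb{R}^2$, the direction of the bipolar orientation, and the West/East flip in \eqref{eq:2.23}. Proposition~\ref{prop:2.3} is the bridge that forces these conventions to agree, and verifying that it does so in all cases -- without an off-by-a-sign slip -- is the delicate point.
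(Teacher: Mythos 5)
Your proposal is correct and follows essentially the same route as the paper, which proves theorem~\ref{thm:3.1} by citing \cite[theorem~2.4]{firo3d-1} (itself resting on the planar trilogy \cite{firo08,firo09,firo10}): realization of the ZS-permutation as a dissipative Morse meander via \cite{firo96}, identification of $\mathcal{C}$ with $\mathcal{C}_f$ through the filled-graph/connection-graph isomorphism and skeleton-wise radial extension, and the signed bookkeeping via proposition~\ref{prop:2.3} that the paper itself carries out in the discussion following the theorem, leading to corollary~\ref{cor:3.2}. The only soft spot is your acyclicity justification in part~(ii) -- the clean argument is that $v\mapsto v(0)$ increases strictly along each directed edge, since $z(\Sigma_\pm^0(w)-w)=0_\pm$ forces $\Sigma_-^0(w)<w<\Sigma_+^0(w)$ for all $x$, rather than any monotonicity of $z(\cdot-\mathbf{N})$ along di-paths of the connection graph -- but this is a one-line repair.
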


See fig.~\ref{fig:3.1}(b) for an illustration of theorem~\ref{thm:3.1}, featuring the ZS-pair $(h_0,h_1)$ for the given orientation of the Eastern hemisphere part of the solid octahedron from fig.~\ref{fig:1.3}.
In fig.~\ref{fig:3.1}(a) the SZ-pair $(h_0,h_1)$ is illustrated for the Western hemisphere of the same example.

Since theorem~\ref{thm:3.1} will play a central role in our proof of theorems~\ref{thm:2.6} and \ref{thm:2.7}, let us comment on the precise interpretation of the equality $\mathcal{C}=\mathcal{C}_f$ here; see \cite{firo08} for further details.
As in the 3-ball case of theorem~\ref{thm:2.6}, equality is understood in the sense of a cell preserving homeomorphism
	\begin{equation}
	\Phi^s: \qquad \mathcal{C} = \bigcup\limits_{v\in \mathcal{E}} c_v
	\quad \longrightarrow \quad 
	\bigcup\limits_{v \in \mathcal{E}_f} W^u(v)
	= \mathcal{C}_f = \mathcal{A}_f
	\label{eq:3.1}
	\end{equation}
with $\Phi^s(c_v) = W^u(v)$, which also preserves the signed hemisphere structure
	\begin{equation}
	\Phi^s\left(S_\delta^j(v)\right) =
	\Sigma_\delta^j\left(\Phi^s(v)\right)\,,
	\label{eq:3.2}
	\end{equation}
for all $v \in \mathcal{E}$, $0\leq j<i_v$, and $\delta = \pm$.
First, this requires a bijective identification
	\begin{equation}
	\Phi^s: \qquad \mathcal{E} 
	\quad \longrightarrow \quad \mathcal{E}_f\,,
	\label{eq:3.3}
	\end{equation}
for the restriction of $\Phi^s$ to the barycenters $v \in \mathcal{E}$ of the cells $c_v \in \mathcal{C}$.
Recalling \cite[lemma~5.2]{firo08}, this identification is defined by the ZS-pair $(h_0,h_1)$ in $\mathcal{E}$ and the boundary orders $(h_0^f, h_1^f)$ in $\mathcal{E}_f$ as
	\begin{equation}
	\Phi^s:= h_\iota^f \circ h_\iota^{-1}\,.
	\label{eq:3.4a}
	\end{equation}
Since $h_0^{-1} \circ h_1 = \sigma = \sigma_f = (h_0^f)^{-1} \circ h_1^f$, the two choices $\iota = 0,1$ define the same bijection $\Phi^s$ in \eqref{eq:3.3}, \eqref{eq:3.4a}.
We therefore use the same symbol $v$ to denote $v \in \mathcal{E}$ and $\Phi^s(v) \in \mathcal{E}_f$.
With this convention we obtain
	\begin{equation}
	h_\iota = h_\iota^f
	\label{eq:3.4b}
	\end{equation}
for $\iota = 0,1$.

In \cite[lemma~5.3]{firo08} we have shown that the vertex identification \eqref{eq:3.3}, \eqref{eq:3.4a} between $\mathcal{C}$ and $\mathcal{C}_f$ already defines an isomorphism between the \emph{filled graph} $G_2$ of $\mathcal{C}$ and the (unsigned) connection graph $\mathcal{H}_f$ of $\mathcal{C}_f$.
Here the filled graph $G_2$ of $\mathcal{C}$ is augmented by the edges from any face center $v$, of 2-dimensional cells $c_v$ in $\mathcal{C}$, to all saddles $w$ of edges $c_w \subseteq \partial c_v$, in addition to the bipolar 1-skeleton $\mathcal{C}^1$.
Sometimes $G_2$ is called the \emph{quadrangulation} of $\mathcal{C}^1$ to emphasize the partitions of $c_v$ into quadrangles.
The graph isomorphism preserves orientation on $\mathcal{C}^1$.
By transitivity and cascading of heteroclinic connectivity in the Sturm attractor $\mathcal{A}_f =\mathcal{C}_f$ we also conclude
	\begin{equation}
	\dim c_v = \dim W^u(v) = i(v) = i_v\,,
	\label{eq:3.5}
	\end{equation}
i.e. the vertex identification \eqref{eq:3.3} preserves cell dimension.
More precisely, the graph isomorphism $\Phi^s$: $G_2 \rightarrow \mathcal{H}_f$ ensures the left equivalence in 
	\begin{equation}
	c_w \subseteq \partial c_v
	\quad \Longleftrightarrow \quad
	v \leadsto w
	\quad \Longleftrightarrow \quad
	W^u(w) \subseteq \partial W^u(v)\,.
	\label{eq:3.6}
	\end{equation}
The right equivalence follows from Morse-Smale transversality in the Thom-Smale complex, as we recall from the introduction.
This allows us to define the homeomorphism $\Phi^s$ by induction	over the cell dimensions $i(v)$ as follows.

The identification of $0$-cells $c_v$, alias sink vertices $i(v)=0$, takes care of the case $i(v)=0$.
Once the homeomorphism $\Phi^s$: $\mathcal{C}^{i-1} \rightarrow\mathcal{C}_f^{i-1}$ has been constructed for the $(i-1)$-skeleta, $i \geq 1$, we can define the extension
	\begin{equation}
	\Phi^s: \qquad \mathcal{C}^i 
	\quad \longrightarrow \quad \mathcal{C}_f^i\,,
	\label{eq:3.7}
	\end{equation}
separately on each closed cell $\overline{c}_v$ of dimension $i(v) = i$.
	Indeed, we may simply extend $\Phi^s$, already defined on the sphere boundary $S^{i-1}(v) = \partial c_v \subseteq \mathcal{C}^{i-1}$ of any regular cell $\overline{c}_v \subseteq \mathcal{C}^i$, radially inwards towards the cell center $v$.
	
The construction of a \emph{signed} homeomophism $\Phi^s$, however, requires a little extra care.
On 1-cell edges $c_w$ of the bipolar 1-skeleton $\mathcal{C}^1$ we observe how the graph isomorphism $\Phi^s$ in \eqref{eq:3.3}, \eqref{eq:3.4a} maps tails $S_-^0(w)$ and heads $S_+^0(w)$ of the bipolar orientation to the signed hemi``sphere'' boundaries  $\Sigma_-^0(w)$ and $\Sigma_+^0(w)$ of the edge $W^u(w)$, respectively.
See our definition of $S_\pm^j$ above \eqref{eq:2.23}.
Indeed, we first note that both $h_\iota^f$ traverse the sink equilibrium $\Sigma_-^0(w)$ before $\Sigma_+^0(w) > w > \Sigma_-^0(w)$, simply because both $h_\iota^f$ proceed according to the boundary order at $x=\iota =0,1$.

We show next how each of the paths $h_\iota$ in $\mathcal{C}$, likewise, traverses the tail vertex $S_-^0(w)$ before the head vertex $S_+^0(w)$.
Indeed the ZS-rules of definition~\ref{def:2.4} for the ZS-pair $(h_0,h_1)$ of Hamiltonian paths in $G_2$ ensure that both $h_\iota$ traverse the vertex $S_-^0(w)$ before $S_+^0(w)$.
In fact, each $h_\iota$ defines an extension of the partial bipolar order on $\mathcal{C}^1 \cap \mathcal{E}$ to a total order of all vertices $\mathcal{E}$ of $G_2$.
To see this we just observe that each $h_\iota$ defines a polar Jordan curve from $\mathbf{N}$ to $\mathbf{S}$ in the planar complex $\mathcal{C}$; see also fig.~\ref{fig:2.1}.
Therefore $\Phi^s$ is signed, automatically, on the 1-skeleta $\mathcal{C}^1$ and $\mathcal{C}_f^1$.

It remains to understand why $\Phi^s$ is also signed on each face closure $\overline{c}_v\rightarrow \text{clos } W^u(v)$, for $i=2$ sources $v$.
Again we note how both $h_\iota^f$ traverse $\Sigma_-^0(v)$ before, and $\Sigma_+^0(v)$ after, any other equilibrium in $\text{clos }W^u(v)$.
The same holds true for the paths $h_\iota$ in the closed 2-cell $\overline{c}_v$, with respect to the boundary minimum $S_-^0(v)$ and the boundary maximum $S_+^0(v)$, respectively, under the bipolar orientation of the 1-skeleton $\mathcal{C}^1$.
Therefore $\Phi^s$ maps the vertices $S_\delta^0(v)$ to the equilibria $\Sigma_\delta^0(v)$, respectively, for $\delta = \pm$.

Our choice of the ZS-pair $(h_0,h_1)$ for the labeling maps $h_\iota$, in the cell complex $\mathcal{C}$, and the identification $h_\iota = h_\iota^f$ in \eqref{eq:3.4a}, \eqref{eq:3.4b} further imply
	\begin{equation}
	\Phi^s\left(S_\delta^1(v)\right) =
	\Sigma_\delta^1(v)\,,
	\label{eq:3.8}
	\end{equation}
for any $i=2$ source $v$ and any $\delta = \pm$.
Indeed, the boundary order $h_0^f$ traverses $w(+-)$, and hence all equilibria in $\Sigma_-^1(v)$, before the face center $v$; see proposition~\ref{prop:2.3}. Similarly, $h_0^f$ traverses $w(-+)$, as well as all other equilibria in $\Sigma_+^1(v)$, after $v$. In the exact same way, the abstract path $h_0$ traverses all vertices in the right boundary $S_-^1(v)$ of the face $c_v$ before, and the left boundary $S_+^1(v)$ after, the face center $v$ itself.
This proves \eqref{eq:3.8}.
It also shows that the homeomorphism $\Phi^s$, defined by radial extension above, is already a signed homeomorphism, i.e. $\Phi^s$: $\mathcal{C} \rightarrow\mathcal{C}_f$ preserves the signed hemisphere decompositions of $\mathcal{C}$ and $\mathcal{C}_f$.
It is useful to rethink the above observations, based on $h_1 = h_1^f$ instead $h_0= h_0^f$ -- with identical results.

Next consider two planar Sturm attractors $\mathcal{A}_f$ and $\mathcal{A}_g$ which are topological disks.
Suppose $\mathcal{A}_f$ and $\mathcal{A}_g$ possess the same signed hemisphere decompositions $\Sigma_{f, \pm}^j(v)$, $\Sigma_{g,\pm}^j(v)$ of their Sturm complexes $\mathcal{C}_f$ and $\mathcal{C}_g$.
By this we mean a bijection $\mathcal{E}_f \rightarrow \mathcal{E}_g$ of equilibria $v_f \mapsto v_g$, with isomorpic connection graphs $\mathcal{H}_f \cong \mathcal{H}_g$, such that the \emph{signed} zero numbers coincide,
	\begin{equation}
	z(w_f-v_f) = z(w_g-v_g)\,,
	\label{eq:3.9}
	\end{equation}
whenever $v_f \leadsto w_f$, alias $v_g \leadsto w_g$.
By the above arguments, we then have a signed homeomorphism
	\begin{equation}
	\begin{aligned}
	\Phi^s :\quad &\mathcal{C}_f^s
	\longrightarrow\mathcal{C}_g^s\\
	\Phi^s :=\quad &\Phi_g^s \circ \left(\Phi_f^s\right)^{-1}
	\end{aligned}
	\label{eq:3.10}
	\end{equation}
of their signed Sturm complexes, which preserves the respective signed hemisphere decompositions:
	\begin{equation}
	\Phi^s\left(\Sigma_{f, \pm}^j(v_f)\right)=
	\Sigma_{g, \pm}^j(v_g)\,.
	\label{eq:3.11}
	\end{equation}
Moreover, \eqref{eq:3.4b} implies $h_\iota^f = h_\iota^g$, for $\iota =0,1$, and hence the Sturm permutations $\sigma= h_0^{-1}\circ h_1$ coincide,
\begin{equation}
	\sigma_f = \sigma_g\,.
	\label{eq:3.12}
	\end{equation}
In this sense, theorems~\ref{thm:2.6} and \ref{thm:2.7} hold true for planar Sturm attractors which are topological disks.

Let us add a word about orientation.
Suppose we had chosen an SZ-pair $(h_0,h_1)$ in the planar topological disk $\mathcal{C}$, instead of a ZS-pair.
Then we should define the left, rather than the right, boundary of all faces $c_v$ to be $S_-^1(v)$.
The right boundaries would then become $S_+^1(v)$, instead.
By the above arguments, the homeomorphism $\Phi^s$ would then remain signed.
Effectively this amounts to a homeomorphic description of the Sturm complex $\mathcal{C}_f$ by a planar complex of the opposite orientation.
Comparing the separated Western and Eastern hemispheres of the solid octahedron in fig.~\ref{fig:1.3}, as depicted in fig.~\ref{fig:3.1}(a), (b), the hemisphere descriptions differ by precisely this orientation reversal.
This is due to the fact that we present both hemispheres $\Sigma_\pm^2(\mathcal{O})$ of $\Sigma^2(\mathcal{O})$ in the same coordinate frame.
Note however, how the identified meridians $\mathbf{WE} = \Sigma_+^1(\mathcal{O})$ and $\mathbf{EW} = \Sigma_-^1(\mathcal{O})$ of fig.~\ref{fig:1.3} and table \eqref{eq:1.15} entirely consist of edges in $\Sigma_+^1(v)$ and $\Sigma_-^1(v)$, respectively, in either planar orientation.
In \cite[fig.~5.2]{firo3d-1}, we have presented an example of orientation reversal in a Sturm 3-ball.

We can now extend theorem~\ref{thm:3.1} to general planar Sturm attractors $\mathcal{A}_f$ which are not topological disks. Such attractors consists of a linear chain of a number $d \geq 0$ of topological disks with intermediate one-dimensional chains, glued on.
This possibly includes a prepended and/or appended one-dimensional spike.
The chains consist of alternating sinks and saddles, each chain with a first and, possibly identical, last sink.

For a single topological disk, the orientation reversal of the planar embedding of a single 2-cell face reverses the orientation of all other cells.
For several disk components, $d >1$, we may choose the orientation of each cell, individually.
In general, this will lead to cell-homeomorphic planar Sturm attractors $\mathcal{A}_f$ with different Sturm permutations $\sigma_f$.
Fixing the signed hemisphere decomposition, alias the ZS-rule for face traversing pairs $(h_0,h_1)$, alias the right/left rule for $S_\pm^1(v)$ in cell faces, will still determine the signed Sturm complex $\mathcal{C}_f^1(v)$ and the Sturm permutation $\sigma_f$ uniquely.

These remarks prove the following variant of theorem~\ref{thm:3.1}, in terms of signed planar cell complexes.

\begin{cor}\label{cor:3.2}
\begin{itemize}
\item [] \phantom{linebreak}
\item[(i)] Let $(h_0,h_1)$ be the ZS-pair of any given planar bipolar complex $\mathcal{C}^s \subseteq \mathbb{R}^2$ with poles $\mathbf{N}$, $\mathbf{S}$ on the boundary of $\mathcal{C}_f$.
This identifies $\mathcal{C}$ as a signed complex $\mathcal{C}^s$; see \eqref{eq:2.23}.
Then the Sturm permutation $\sigma_f = \sigma$:= $h_0^{-1}h_1$ defines a unique signed Sturm complex
	\begin{equation}
	\mathcal{C}_f^s=\mathcal{C}^s\,,
	\label{eq:3.13}
	\end{equation}
in the sense of \eqref{eq:2.3}, \eqref{eq:2.10a}.
Equality in \eqref{eq:3.13} is understood by a signed homeomorphism $\Phi^s$ as in \eqref{eq:3.1}, \eqref{eq:3.2} above.
\item[(ii)] Conversely, let $\mathcal{C}_f^s$ be the signed Sturm complex of a given planar Sturm attractor $\mathcal{A}_f$.
Then the signed hemisphere decomposition $\mathcal{C}_f^s$ defines a planar embedding of $\mathcal{C}_f^s$, with unique orientation of each disk component of $\mathcal{C}_f^s$, such that the boundary orders $h_\iota$:= $h_\iota^f$, $\iota = 0,1$ are a ZS-pair.
\end{itemize}
\end{cor}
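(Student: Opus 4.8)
The plan is to assemble the pieces already developed in this section: corollary~\ref{cor:3.2} is the signed, chain-assembled refinement of theorem~\ref{thm:3.1}, and essentially no new argument is needed beyond careful bookkeeping.

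For part~(i) I would first decompose the given planar bipolar complex $\mathcal{C}$ into its topological disk components, linked by one-dimensional chains of alternating sinks and saddles, together with any prepended or appended spike. On each disk component theorem~\ref{thm:3.1}(i) applies directly, so the Sturm permutation $\sigma_f = \sigma = h_0^{-1}h_1$ of the ZS-pair produces a planar Sturm attractor $\mathcal{A}_f$ with $\mathcal{C}_f = \mathcal{C}$. The barycenter identification $\Phi^s := h_\iota^f \circ h_\iota^{-1}$ of \eqref{eq:3.4a} is well defined because $h_0^{-1}h_1 = \sigma = \sigma_f = (h_0^f)^{-1}h_1^f$; the filled-graph isomorphism $G_2 \cong \mathcal{H}_f$ then gives the incidence equivalence \eqref{eq:3.6}, and one extends $\Phi^s$ radially over the cell skeleta as in \eqref{eq:3.7}. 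The remaining point is that $\Phi^s$ is \emph{signed}: on the $1$-skeleton the ZS-rules of definition~\ref{def:2.4} force both $h_\iota$ to meet $S_-^0(w)$ before $S_+^0(w)$, matching $h_\iota^f$ on $\Sigma_\pm^0(w)$; on each $i=2$ face $c_v$ I would invoke the $n=2$ row of proposition~\ref{prop:2.3}, by which $h_0$ traverses $w(+-)$ (hence all of $\Sigma_-^1(v)$) before $v$ and $w(-+)$ (hence all of $\Sigma_+^1(v)$) after $v$, which together with the right/left convention defining $S_\pm^1(v)$ in the ZS-case yields $\Phi^s(S_\delta^1(v)) = \Sigma_\delta^1(v)$ as in \eqref{eq:3.8}. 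Since planar attractors have Morse index at most two, this exhausts the positive-dimensional hemisphere levels, so \eqref{eq:3.2} holds. Uniqueness of $\mathcal{C}_f^s = \mathcal{C}^s$ in \eqref{eq:3.13} is then immediate: the planar embedding plus the bipolar orientation plus the ZS-rule already pin down the labels $S_\pm^1(v)$ on every face, hence fix $\mathcal{C}^s$, while $\sigma_f = \sigma$ fixes $\mathcal{A}_f$ and $\mathcal{C}_f^s$ up to the signed homeomorphism.

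For part~(ii), given the signed Sturm complex $\mathcal{C}_f^s$, i.e. the signed hemisphere template $\mathcal{E}_\pm^j(v)$ of $\mathcal{A}_f$, I would apply theorem~\ref{thm:3.1}(ii) componentwise to recover a unique bipolar orientation of $\mathcal{C}_f$, and hence $h_\iota := h_\iota^f$, with the trivial hemisphere structure on the intermediate one-dimensional chains. I would then fix the planar embedding by choosing, on each disk component, the orientation for which the right boundary of every $i=2$ face $c_v$ equals $S_-^1(v) = \Sigma_-^1(v)$; since the two boundary arcs of $\partial c_v$ between its (downward) boundary minimum and maximum are precisely $\Sigma_\pm^1(v)$ and are interchanged by reversing the disk orientation, this determines the orientation uniquely per component. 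Finally one checks that $(h_0^f,h_1^f)$ satisfies the ZS face-traversal rules of definition~\ref{def:2.4}: conditions (i),(ii) by proposition~\ref{prop:2.3} and the right/left choice just made, and condition (iii) because $h_\iota^f$ follows the boundary order at $x=\iota$.

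The main obstacle I anticipate is not any of these per-component steps, which are all in hand, but the bookkeeping of how the componentwise data glue along the one-dimensional linking chains and the prepended/appended spikes. One must verify that the bipolar orientation supplied by theorem~\ref{thm:3.1} is forced on each linking chain -- an alternating sink--saddle path admits only one directed orientation between its two endpoint sinks -- so the component orientations agree at the shared sinks, and that the componentwise labeling paths $h_\iota$ concatenate into global bijections compatible with the global boundary order at $x = \iota$. This is exactly the chain argument sketched in the discussion preceding the corollary; I expect it to be routine, if slightly tedious.
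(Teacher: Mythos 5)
Your proposal follows essentially the same route as the paper: the corollary is obtained there precisely by the preceding discussion of the topological-disk case (the identification $\Phi^s = h_\iota^f\circ h_\iota^{-1}$, the filled-graph isomorphism, radial extension, signedness on the $1$-skeleton via the bipolar order and on faces via proposition~\ref{prop:2.3} and the right/left convention for $S_\pm^1(v)$), followed by the remark that a general planar Sturm attractor is a chain of such disks with one-dimensional linking chains, on which the orientation of each disk component is fixed by the ZS-rule. Your flagged gluing step is handled at the same (brief) level of detail in the paper, so the argument is correct and matches.
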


We conclude this section by recalling the role of the fast unstable manifolds $W^{uu}(v) = W^1(v)$ of $i=2$ sources $v$ in 2-cells $c_v = W^u(v)$.
Their role is usually ignored in the study of Thom-Smale dynamic complexes.
Our goal is to clarify the extent to which these fast unstable manifolds already determine the sign information in the signed Sturm complex $\mathcal{C}_f^s$, given just the Sturm complex $\mathcal{C}_f$ itself.
Since $z(\cdot -v) = 0_\pm$ on $W^1(v) \smallsetminus \lbrace v\rbrace$, these manifolds are heteroclinic orbits
	\begin{equation}
	v \leadsto \Sigma_\pm^0(v)\,;
	\label{eq:3.14}
	\end{equation}
see proposition~\ref{prop:2.2}(iii).
In particular their targets identify the bipolar extrema $\Sigma_\pm^0(v)$ in the circular cell boundary $\partial W^u(v) = \partial c_v$, up to sign.
Flipping this sign in one single 2-cell flips all signs, in unison.
This defines the bipolar orientation on the 1-skeleton $\mathcal{C}_f^1$, up to global sign reversal.

In a planar Sturm attractor it only remains to determine the 1-hemispheres $\Sigma_\pm^1(v)$, for a complete specification of the signed Sturm complex $\mathcal{C}_f^s$.
For the case $d=1$ of a single topological disk, only, this follows globally from the bipolar orientation, up to a global simultaneous swap of all $\Sigma_+^1(v)$ with their respective counterparts $\Sigma_-^1(v)$.

Both the global reversal of the bipolar orientation and the global orientation flip of the planar embedding can be achieved by the trivial equivalences $x \mapsto 1-x$, $u \mapsto -u$; see the introduction and \cite[corollary~2.5]{firo3d-1}.
In conclusion, the Sturm complex $\mathcal{C}_f$ determines its signed version $\mathcal{C}_f^s$ uniquely, up to trivial equivalences, for the case $d=1$ of a single topological disk.
By corollary~\ref{cor:3.2}, this determines the realizing Sturm permutation $\sigma = \sigma_f$ of the prescribed (unsigned) Sturm complex $\mathcal{C} = \mathcal{C}_f$ uniquely, up to a flip conjugation $k \sigma k$ and taking inverses, once the target equilibria of the fast unstable manifolds $W^1(v)$ are specified.

This planar result neither extends to the case $d \geq 2$ of planar Sturm attractors $\mathcal{A}_f$ with multiple topological disk components, nor to 3-ball Sturm attractors.

%%%%%%%%%%%%%%%%%%%%%%%%%%%%%%%%%%%%%%%%%%%%%%%%%%%%%%%%%%%

\section{Noses and scoops}
\label{sec4}

In this section we study noses $\lbrace v_1, v_2\rbrace \in \mathcal{E}$ of concrete and abstract Sturm permutations $\sigma_f = \sigma$.
Abstractly, let $h_\iota$: $\lbrace 1, \ldots , N \rbrace \rightarrow \mathcal{E}$ be labeling maps such that $\sigma$:= $h_0^{-1} h_1$ is Sturm.
Then we call the pair $\lbrace v_1, v_2\rbrace$ a \emph{nose} if the elements $v_j$ are adjacently labeled by both maps $h_\iota$, i.e.
	\begin{equation}
	| h_\iota^{-1} (v_1) - h_\iota^{-1}(v_2) | =1\,,
	\label{eq:4.1}
	\end{equation}
for $\iota = 0,1$.
\begin{figure}[t!]
\centering \includegraphics[width=\textwidth]{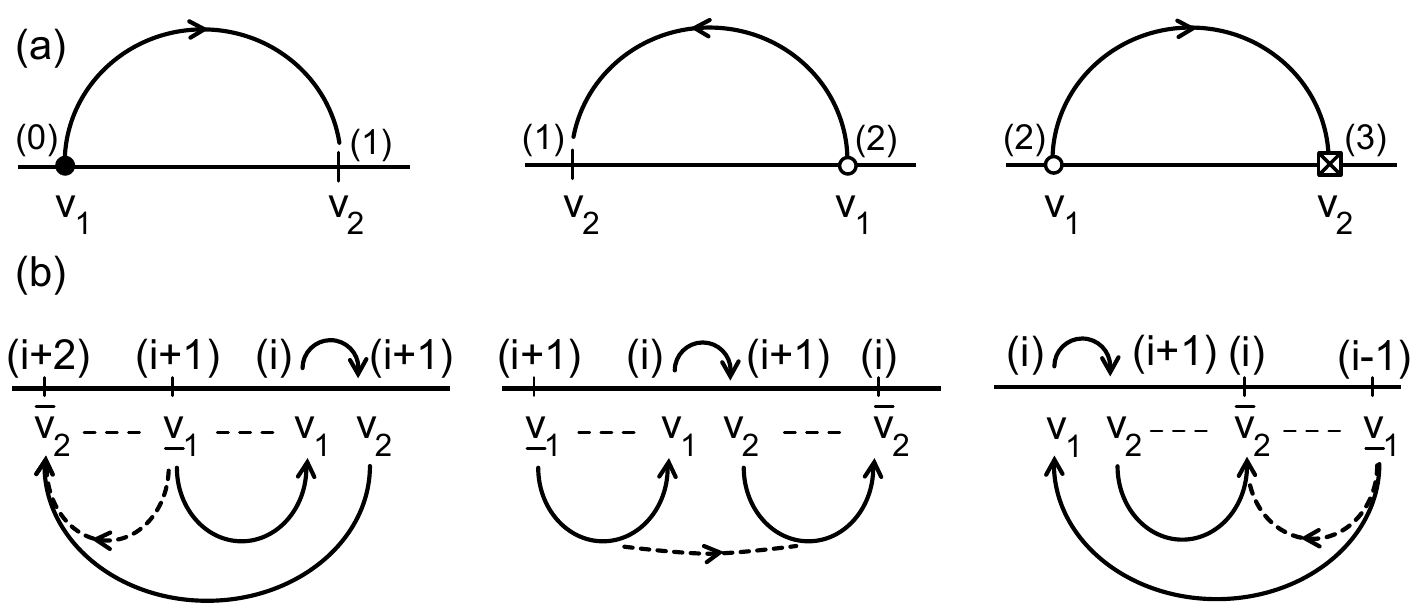}
\caption{\emph{
Upper arc noses $\{ v_1, v_2\}$, such that $h_0^{-1}(v_2) = h_0^{-1}(v_1)+1$.
(a) All cases with Morse numbers $i_{v_j} \leq 3$ (in parentheses).
Note how the meander $\mathcal{M}$ crosses the horizontal $h_1$-axis upwards, at even Morse numbers $(i_{v_1})$, and downwards at odd $(i_{v_2})$.
(b) Configurations with $i(v_2) =i(v_1)+1$, including $h_0$-predecessors $\underline{v}_1$ of $v_1$ and successors $\overline{v}_2$ of $v_2$.
Lower shortcut arcs in $\check{\mathcal{M}}$, from $\underline{v}_1$ to $\overline{v}_2$ after retraction of the nose $\{ v_1, v_2\}$ are dashed.
}}
\label{fig:4.1}
\end{figure}
We exclude the polar cases of $h_\iota^{-1}(v_1)$ or $h_\iota^{-1}(v_2)$ in $\lbrace 1, N \rbrace$ just for simplicity of notation in the nose retractions below.
The naming comes from the resulting arc configuration in the meander $\mathcal{M}$ associated to $\sigma$.
See also \cite{firo99}.
See fig.~\ref{fig:4.1} for the list of upper nose examples, i.e. $\mathcal{M}$-arcs above the horizontal $h_1$-axis, with Morse numbers $i \leq 3$.
Without loss of generality we fix
	\begin{equation}
	h_0^{-1} (v_2) = h_0^{-1}(v_1) +1\,.
	\label{eq:4.2}
	\end{equation}
By \eqref{eq:1.17}, \eqref{eq:1.18} the More numbers $i_{v_j}$ are also adjacent,
	\begin{equation}
	| i(v_1) - i(v_2) | =1\,,
	\label{eq:4.3}
	\end{equation}
and of the opposite even/odd parity compared to either label $h_\iota^{-1}(v_j)$.
The meander itself crosses the horizontal $h_1$-axis upwards, at odd labels, and downwards, at even labels.

A \emph{nose retraction} passes from $h_\iota, \sigma$ to $\check{h}_\iota, \check{\sigma}$, simply skipping a nose $\lbrace v_1, v_2\rbrace$ and its labels.
Thus $\check{h}_\iota$: $\lbrace 1, \ldots , N-2\rbrace \rightarrow \check{\mathcal{E}}$:= $\mathcal{E} \smallsetminus \lbrace v_1, v_2\rbrace$ and
	\begin{equation}
	\check{\sigma} := \check{h}_0^{-1} \circ \check{h}_1\,.
	\label{eq:4.4}
	\end{equation}
The associated meander $\check{\mathcal{M}}$ of $\check{\sigma}$ connects the intersection $\underline{v}_1$:= $h_0(h_0^{-1}(v_1)-1)$ of the $v_1$-predecessor to the $v_2$-successor $\overline{v}_2$:= $h_0(h_0^{-1}(v_2)+1)$ by a direct arc of $\check{\mathcal{M}}$, in the half plane opposite to the arc $v_1v_2$.
The shortcut $\check{h}_0$: $ \ldots \underline{v}_1 \overline{v}_2 \ldots$ is dashed in fig.~\ref{fig:4.1}(b).

In proposition~\ref{prop:4.1} below we show that nose retractions do not affect the Sturm property, Morse indices, or signed zero numbers of the remaining elements.
We caution the reader, however, that the remaining heteroclinic orbits of the connection graph $\mathcal{H}$ may well be affected.
In definition~\ref{def:4.2} we introduce certain sequences of successive nose retractions, called scoops.
In proposition~\ref{prop:4.3}, these scoops reduce permutations $\sigma$ of 3-meander templates to Sturm permutations $\sigma_\pm$ of planar Sturm attractors $\mathcal{A}_\pm$.
In section~\ref{sec5}, we will identify $\mathcal{A}_\pm$ as the closed hemispheres of the Sturm 3-ball attractor of $\sigma$ itself.

\begin{prop}\label{prop:4.1}
Let $\sigma \in S_N$ be any Sturm permutation, and let $\check{\sigma} \in S_{N-2}$ arise by nose retraction of $\lbrace v_1,v_2\rbrace$ from $\sigma$; see \eqref{eq:4.4}.

Then $\check{\sigma}$ is again a Sturm permutation.
The Morse indices $i_v$ and the signed zero numbers $z_{wv}$ of $v \neq w$ are all inherited from $\sigma$, without any change, for $v, w \in \check{\mathcal{E}}$:= $\mathcal{E} \smallsetminus \lbrace v_1,v_2\rbrace$.
\end{prop}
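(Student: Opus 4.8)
The plan is to exploit the three combinatorial descriptions already available: the Sturm/dissipative‐Morse‐meander characterization of \cite{firo99}, the recursive formula \eqref{eq:1.17}--\eqref{eq:1.18} for Morse numbers, and the recursive formula \eqref{eq:2.3c} for (signed) zero numbers. The key observation is that a nose $\{v_1,v_2\}$ is a pair of arcs, one over and one under the axis, joining two axis‐adjacent, meander‐adjacent points; retracting it is a purely local surgery on the meander $\mathcal{M}$ that deletes two crossings and reconnects $\underline v_1$ to $\overline v_2$ by a single short arc in the opposite half‐plane, leaving every other arc of $\mathcal{M}$ geometrically untouched. So $\check{\mathcal{M}}$ is again a Jordan curve from SW to NE with transverse crossings, hence an abstract meander. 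I would first record this topological fact carefully (including why no new self‐intersection is created: the deleted nose arc and the new shortcut arc are each innermost, so the reconnection stays embedded), and verify dissipativeness: since we excluded the polar cases $h_\iota^{-1}(v_j)\in\{1,N\}$, the poles $\mathbf N=h_\iota(1)$ and $\mathbf S=h_\iota(N)$ survive as the first and last crossings of $\check{\mathcal M}$, so $\check\sigma(1)=1$, $\check\sigma(N-2)=N-2$.

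Next I would handle invariance of the Morse numbers $i_v$. The cleanest route is \emph{not} to re‐run the recursion \eqref{eq:1.17} on $\check\sigma$, but to use the intrinsic geometric meaning of $i_v$ on the meander: $i_v$ counts (up to the dissipative normalization) a winding‐type quantity that can be read off from how the arcs nest around the crossing $v$; deleting the nose $\{v_1,v_2\}$ removes an innermost over‐arc together with an innermost under‐arc, and such an innermost cap contributes nothing to the nesting depth seen by any crossing $w\notin\{v_1,v_2\}$. Concretely, I would compare the recursion \eqref{eq:1.17} for $\sigma$ and $\check\sigma$ term by term: walking along $h_0$, the only affected step is the passage $\underline v_1\rightsquigarrow\overline v_2$ replacing the three steps $\underline v_1\rightsquigarrow v_1\rightsquigarrow v_2\rightsquigarrow\overline v_2$; one checks that the two increments $(-1)^{j+1}\mathrm{sign}(\dots)$ telescoping over the deleted triple sum to the single increment for $\check\sigma$, using \eqref{eq:4.2}, \eqref{eq:4.3} and the even/odd parity matching noted after \eqref{eq:4.3}. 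This also delivers $i_{v_1},i_{v_2}$ as the values forced by \eqref{eq:4.3} but, more to the point, it shows $i_w$ is unchanged for all surviving $w$. The same bookkeeping, applied now to the two‐index recursion \eqref{eq:2.3c}, gives invariance of $z_{wv}$ for $v,w\in\check{\mathcal E}$: fix the second argument $v$, walk along $h_0$ in the first argument, and observe that the bracketed difference of signs telescopes across the deleted triple exactly as before, because the nose contributes a cancelling pair of consecutive steps. Since the unsigned zero numbers are inherited and the sign is just $\mathrm{sign}(h_0^{-1}(w)-h_0^{-1}(v))$ by the remark after \eqref{eq:2.3d}, and this sign is manifestly preserved under deleting two entries not separating $w$ from $v$, the \emph{signed} zero numbers are inherited too.

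Finally, Morse‐ness of $\check{\mathcal M}$ follows at once: $i_w\ge 0$ for all surviving $w$ because these values are unchanged, and there are no other crossings to worry about. Combined with dissipativeness, $\check\sigma$ is a dissipative Morse meander, i.e.\ a Sturm permutation by \cite{firo99, firo96}. The main obstacle I anticipate is the careful telescoping argument for \eqref{eq:2.3c}: one must track the sign terms $\mathrm{sign}(\sigma^{-1}(\cdot)-\sigma^{-1}(k))$ through the deletion and check they reorganize correctly for \emph{every} surviving pair $(w,v)$, including the delicate cases where $v$ or $w$ is a neighbor $\underline v_1$ or $\overline v_2$ of the deleted nose; here the geometric picture of fig.~\ref{fig:4.1}(b) — the innermost cap being removed and replaced by the opposite‐side shortcut — is what guarantees the cancellation, and I would lean on that picture to organize the case check rather than attempting a brute‐force index computation. (As the proposition's statement already warns, no claim is made about heteroclinic orbits of $\mathcal H$, which genuinely do change; only Morse indices and signed zero numbers are asserted invariant, and those are exactly the quantities the recursions \eqref{eq:1.17} and \eqref{eq:2.3c} compute.)
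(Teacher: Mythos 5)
Your proposal is correct and follows essentially the same route as the paper: establish that the retracted curve is again a dissipative meander (using the exclusion of polar noses), then verify invariance of Morse numbers and of the unsigned zero numbers by telescoping the recursions \eqref{eq:1.17} and \eqref{eq:2.3c} across the deleted triple of $h_0$-steps, the key cancellation coming from the $h_1$-adjacency of the nose vertices, with the signs recovered from the unchanged $h_0$-order of the surviving vertices. The paper organizes the Morse-number check by inspecting the cases of fig.~\ref{fig:4.1}(b) and carries out the zero-number telescoping explicitly in \eqref{eq:4.5}--\eqref{eq:4.8}, but the substance is identical to what you describe.
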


\begin{proof}[\textbf{Proof.}]
Without loss of generality, and to simplify language, suppose $\lbrace v_1, v_2\rbrace$ is an upper arc nose.
Else apply the trivial equivalence $u \mapsto -u$, which rotates all Sturm meanders by $180^\circ$.
By the labeling \eqref{eq:4.2} this implies $i$:= $i_{v_1}$ is even and $i_{v_2} = i_{v_1} \pm 1$ is odd.

We first show how $\check{\sigma}, \check{\mathcal{M}}$ define a meander.
In the meander $\mathcal{M}$ associated to $\sigma$ we only consider the case of a right oriented, and hence right turning, upper nose arc from $v_1$ to $v_2$.
Then $i_{v_2}= i_{v_1}+1$.
The other case, $i_{v_2} = i_{v_1}-1$, is analogous and will be omitted.
See fig.~\ref{fig:4.1}(b) for the resulting arc configurations and the Morse numbers of $h_0$: $\ldots \underline{v}_1 v_1 v_2 \overline{v}_2 \ldots $.
The nose vertices $v_1$ and $v_2 = h_1(h_1^{-1}(v_1)+1)$ are also $h_1$-adjacent, by definition \eqref{eq:4.1}.
The dashed lower arc shortcuts $\check{h}_0$: $\ldots \underline{v}_1 \overline{v}_2 \ldots$ which skip the retracted nose $\lbrace v_1, v_2 \rbrace$, therefore define a meander $\check{M}$.
In particular the permutation $\check{\sigma}$ defined by $\check{\mathcal{M}}$ is a meander.
Moreover $\check{\sigma}$ is dissipative by our exclusion of polar noses $\lbrace v_1, v_2\rbrace$.

To show preservation of Morse numbers under nose retraction we again consult the three cases of fig.~\ref{fig:4.1}(b), only. We compare the recursion \eqref{eq:1.17} for the passage from $i_{\overline{v}_1}$ to $i_{\overline{v}_2}$ before and after nose retraction of $\lbrace v_1 , v_2\rbrace$.
By induction from $j=0$ to $j_1 = h_0^{-1} (\overline{v}_1)$, the Morse numbers $i_{\overline{v}_1}$ coincide.
By inspection of fig.~\ref{fig:4.1}(b), the resulting Morse numbers $i_{\overline{v}_2} = i_{h_0(j_1+3)} = i_{\check{h}_0(j_1+1)}$ coincide in all cases.
This proves preservation of Morse numbers.
In particular $\check{\sigma}$ is Morse, as $\sigma$ is, which proves $\check{\sigma}$ is Sturm.

We prove preservation of the signed zero numbers $\check{z}_{vw} = z_{vw}$ under nose retraction of $\lbrace v_1, v_2\rbrace$, next.
Since nose retraction does not alter the $h_0$-order $<_0$ of the remaining vertices in $\check{\mathcal{E}}$, it is sufficient to prove preservation of the unsigned zero numbers.
In view of the explicit recursions \eqref{eq:2.3c} and preservation of Morse numbers, it is sufficient to prove
	\begin{equation}
	\check{z}_{\overline{v}_2 w} - \check{z}_{\underline{v}_1 w} =
	z_{\overline{v}_2 w} - z_{\underline{v}_1 w}
	\label{eq:4.5}
	\end{equation}
for $h_0$: $\ldots \underline{v}_1 v_1 v_2 \overline{v}_2 \ldots$ and any $w <_0 v_1$.
Here $\check{z}$ refers to $\check{h}_\iota$, $\check{\sigma}$ after nose retraction of $\lbrace v_1, v_2\rbrace$.
With the notation $k,j_1$ for $h_0^{-1}(w)$, $h_0^{-1}(v_1)$, and the abbreviations $\zeta_j, \check{\zeta}_j$ and $s_j$ for unsigned $z_{h_0(j)h_0(k)}$, $\check{z}_{h_0(j)h_0(k)}$, and $\tfrac{1}{2} \text{sign}(\sigma^{-1} (j) - \sigma^{-1}(k))$, respectively, claim \eqref{eq:4.5} reads
	\begin{equation}
	\check{\zeta}_{j_1} - \check{\zeta}_{j_1-1} =
	\zeta_{j_1+2} - \zeta_{j_1-1}\,.
	\label{eq:4.6}
	\end{equation}
To prove claim \eqref{eq:4.6}, we first note that recursion \eqref{eq:2.3c} asserts
	\begin{equation}
	\zeta_{j+1}-\zeta_j = (-1)^{j+1} \left( s_{j+1} - s_j\right)\,.
	\label{eq:4.7}
	\end{equation}
Note $s_{j_1+1} = s_{j_1}$ for the adjacent nose equilibria $v_2 = h_0(j_1+1)$ and $v_1= h_0(j_1)$.
Summing \eqref{eq:4.7} from $j=j_1-1$ to $j=j_1+1$ therefore implies
	\begin{equation} 
	\begin{aligned}
	&\zeta_{j_1+2} - \zeta_{j_1-1} =\\
	&\quad= (-1)^{j_1} \left( s_{j_1}- s_{j_1-1}\right) + (-1) ^{j_1+1}
		\left( s_{j_1+1}- s_{j_1}\right) + (-1)^{j_1+2}
		\left( s_{j_1+2}- s_{j_1+1}\right)=\\
	&\quad= (-1)^{j_1} 
		\left( -s_{j_1-1}+2s_{j_1}-2s_{j_1+1}+s_{j_1+2}\right)= \\
	&\quad= (-1)^{j_1}
		\left( s_{j_1+2}-s_{j_1-1}\right)=\\
	 &\quad = \check{\zeta}_{j_1} - \check{\zeta}_{j_1-1}\,.
	\end{aligned}
	\label{eq:4.8}
	\end{equation}
Here we have used $\check{s}_{j_1-1} = s_{j_1-1}$ and $\check{s}_{j_1}=s_{j_1+2}$ in the last equality.
This proves signed invariance of signed zero numbers under nose retraction, and also proves the proposition.
\end{proof}

From now on, and for the remaining paper, we return to a 3-meander template $\mathcal{M}$ with associated Sturm permutation $\sigma_f= \sigma$, Sturm attractor $\mathcal{A}_f$, Sturm complex $\mathcal{C}_f$, and boundary orders $h_\iota^f$ at $x=\iota = 0,1$.
See definition~\ref{def:1.3}.
Our first task is to work towards identifying $\mathcal{A}_f$ as a Sturm 3-ball, in theorem~\ref{thm:5.1} below.
As candidates ${\mathcal{E}'}_\pm^j$ for the equilibrium sets $\mathcal{E}_\pm^j$ in the signed hemisphere decomposition $\Sigma_\pm^j$ of the 2-sphere $\partial W^u(\mathcal{O})$, we define the following sets of vertices $\mathcal{E}$, alias equilibria $\mathcal{E}_f$:
	\begin{equation}
	\begin{aligned}
	{\mathcal{E}'_-}^0 &:=
		\lbrace h_\iota^f(1)\rbrace\,, \quad
		{\mathcal{E}'_+}^0 :=
		\lbrace h_\iota^f(N)\rbrace\,;\\
	{\mathcal{E}'_-}^1 &:=
		\lbrace v_-^1, v_-^2, \ldots , v_-^{2m-1}\rbrace\,, \quad
		{\mathcal{E}'_+}^1 :=
		\lbrace v_+^1, v_+^2,\ldots , v_+^{2n-1}\rbrace\,;\\
	{\mathcal{E}'}^j &:=
		\bigcup\limits_{k\leq j,\, \delta = \pm} {\mathcal{E}'_\delta}^k\,,
		\quad \text{clos } {\mathcal{E}'_\delta}^j :=
		{\mathcal{E}'_\delta}^j \cup {\mathcal{E}'}^{j-1}\,;\\
	{\mathcal{E}'_\delta}^2 &:=
		\lbrace v \in \mathcal{E} \smallsetminus {\mathcal{E}'}^1 \,|\,
		v \neq \mathcal{O}\,,\,\, \delta \mathcal{O} < \delta v \
		\text{ at }\ x=1\rbrace\,.
	\end{aligned}
	\label{eq:4.9}
	\end{equation}
Here $j=0,1,2$ and $\delta = \pm$.

\begin{defi}\label{def:4.2}
We define the \emph{East scoop} $\mathcal{M}_-$ with scooped Sturm permutation $\sigma_-$:= $(\check{h}_0)^{-1} \circ \check{h}_1$ as the result of the removal of $\mathcal{O} \cup {\mathcal{E}'_+}^2$, by successive nose retraction.
This leads to the replacement of the meander part
	\begin{equation}
	\begin{aligned}
	h_0&:\quad
	v_-^0 \ldots v_-^\mu \ldots v_-^{\mu'}w_-^1 \ldots v_-^{2m-1} \ldots 
	w_-^0\mathcal{O}w_+^0\ldots v_+^{2n-1} \ldots
	w_+^1 v_+^{\nu'}\ldots v_+^\nu \ldots v_+^0 \\ 
	&\text{ by }\\
	\check{h}_0&:\quad
	v_-^0 \ldots v_-^\mu \ldots v_-^{\mu'}w_-^1\ldots
	v_-^{2m-1} \ldots w_-^0 v_+^{2n-1} 
	\ldots v_+^{\nu'} \ldots v_+^\nu \ldots v_+^0\,.
	\end{aligned}
	\label{eq:4.10}
	\end{equation}
Similarly $\check{h}_1$ just skips the vertices $\mathcal{O} \cup {\mathcal{E}'_+}^2$.
Here $\check{h}_0$ and $\check{h}_1$ terminate along a full $\mathbf{S}$-polar $h_0$-serpent.

Analogously, the \emph{West scoop} $\mathcal{M}_+$, $\sigma_+$:= $(\check{h}_0)^{-1} \circ \check{h}_1$ removes $\mathcal{O} \cup {\mathcal{E}'_-}^2$ by successive nose retraction.
This replaces
	\begin{equation}
	\begin{aligned}
	h_0&:\quad
	v_-^0 \ldots v_-^\mu \ldots v_-^{\mu'}w_-^1 \ldots v_-^{2m-1} \ldots 
	w_-^0\mathcal{O}w_+^0\ldots v_+^{2n-1} \ldots
	w_+^1 v_+^{\nu'}\ldots v_+^\nu \ldots v_+^0 \\
	&\text{ by }\\
	\check{h}_0&:\quad
	v_-^0 \ldots v_-^\mu \ldots v_-^{\mu'} \ldots v_-^{2m-1}w_+^0 \ldots
	v_+^{2n-1} \ldots w_+^1v_+^{\nu'} \ldots v_+^\nu \ldots v_+^0\,.
	\end{aligned}
	\label{eq:4.11}
	\end{equation}
Similarly $\check{h}_1$ just skips the vertices $\mathcal{O} \cup {\mathcal{E}'_-}^2$.
Here $\check{h}_0$ and $\check{h}_1$ start along a full $\mathbf{N}$-polar $h_1$-serpent.
\end{defi}

To see how, say, the East scoop is actually feasible by successive nose retraction, let us consider fig.~\ref{fig:1.5} of a 3-meander template again.
We first note that all vertices of ${\mathcal{E}'_+}^2$ are located (nonstrictly) between $w_+^1$ and $w_+^0$ along the $h_1$-axis, excepting the vertices of type $v_-^j$:
	\begin{equation}
	{\mathcal{E}'_+}^2 = \lbrace w \in \mathcal{E}\, | \, w_+^1 \leq_1
	w \leq_1 w_+^0 \rbrace \smallsetminus {\mathcal{E}'_-}^1\,.
	\label{eq:4.12}
	\end{equation}
Here and below $<_\iota$ denotes the ordering at $x=\iota$, or by $h_\iota$, for $\iota =0,1$.
The reason for \eqref{eq:4.12} is the overlap of the polar serpents, by definition~\ref{def:1.3}(ii), together with extremality of $w_\pm^\iota$.
By successive nose retractions under the upper arcs of $v_-^2v_-^3, \ldots , v_-^{\mu-1} v_-^\mu$ we can achieve $\mu = 1$.
In other words, $v_-^1$ is the immediate $h_1$-successor of $w_+^0$.
We can then eliminate all vertices from $\mathcal{O}$ to $w_+^0$ by lower nose retraction, to arrive at the situation of definition~\ref{def:4.2}.
Analogous arguments justify the West scoop of
	\begin{equation}
	{\mathcal{E}'_-}^2 = \lbrace w \in \mathcal{E}\, | \, w_-^0 \leq_1
	w \leq w_+^1 \rbrace \smallsetminus {\mathcal{E}'_+}^1\,.
	\label{eq:4.13}
	\end{equation}

\begin{prop}\label{prop:4.3}
The scoops $\sigma_{f_\pm}= \sigma_\pm$ of definition~\ref{def:4.2} are Sturm permutations of planar Sturm attractors $\mathcal{A}_\pm$:= $\mathcal{A}_{f_\pm}$.
In particular equilibria $v \neq w$ in $\mathcal{E}_f$ satisfy
	\begin{align}
	v,w \in \text{clos } {\mathcal{E}'_\delta}^1 \quad
	\Longrightarrow \quad i(v) \leq 1,\, z(v-w) = 0\label{eq:4.14}\\
	v,w \in \text{clos } {\mathcal{E}'_\delta}^2 \quad
	\Longrightarrow \quad i(v) \leq 2,\, z(v-w) \leq 1
	\label{eq:4.15}
	\end{align}
for $\delta = \pm$, in the notation of \eqref{eq:4.9}.
\end{prop}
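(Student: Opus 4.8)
The plan is to reduce the entire statement to Proposition~\ref{prop:4.1}. First I would record that each of the two scoops of Definition~\ref{def:4.2} is a finite composition of nose retractions: by the discussion following Definition~\ref{def:4.2}, see \eqref{eq:4.12} and \eqref{eq:4.13}, for the East scoop one first retracts the upper arcs $v_-^2v_-^3,\ldots,v_-^{\mu-1}v_-^\mu$ to make $v_-^1$ the immediate $h_1$-successor of $w_+^0$, and then removes the vertices strictly between $\mathcal{O}$ and $w_+^0$ by successive lower nose retractions, arriving exactly at \eqref{eq:4.10}; the West scoop is symmetric and ends at \eqref{eq:4.11}. By Proposition~\ref{prop:4.1} each individual retraction preserves the Sturm property and leaves the Morse numbers $i_v$ and the signed zero numbers $z_{wv}$ of all surviving vertices unchanged. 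Hence $\sigma_\pm$ are again Sturm permutations, and on the surviving vertex set $\mathcal{E}_{f_\pm}=\mathcal{E}_f\smallsetminus(\{\mathcal{O}\}\cup{\mathcal{E}'_\mp}^2)=\mathrm{clos}\,{\mathcal{E}'_\pm}^2$ — using that \eqref{eq:4.9} partitions $\mathcal{E}_f$, since equilibria have pairwise distinct boundary values at $x=1$ — all Morse indices and signed zero numbers coincide with those of $\mathcal{A}_f$.

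Planarity of $\mathcal{A}_\pm$, and hence the Morse index halves $i(v)\le 1$ of \eqref{eq:4.14} and $i(v)\le 2$ of \eqref{eq:4.15}, is then immediate: by Definition~\ref{def:1.3}(i) the equilibrium $\mathcal{O}$ is the only vertex with Morse number $3$, and it is removed by either scoop, so every surviving vertex has Morse number $\le 2$; by the Fusco--Rocha identity \eqref{eq:2.3b} this says precisely that $\mathcal{A}_\pm$ are planar Sturm attractors. The sharper bound $i(v)\le 1$ in \eqref{eq:4.14} in addition uses that the vertices of $\mathrm{clos}\,{\mathcal{E}'_\delta}^1$ are exactly the vertices of the two antipolar serpents through $\mathbf{N}$ and $\mathbf{S}$, whose Morse numbers lie in $\{0,1\}$ by \eqref{eq:1.20}.

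For the zero-number bounds I would work inside the scooped attractors, which is legitimate since, by Step~1, $z(v-w)$ is unchanged by the scoop. For \eqref{eq:4.15} one has $\mathrm{clos}\,{\mathcal{E}'_\delta}^2=\mathcal{E}_{f_\delta}$, so the claim is a statement purely about the planar attractor $\mathcal{A}_\delta$; here I would exploit that, by Definition~\ref{def:4.2}, the scooped boundary orders $\check h_0,\check h_1$ start (West scoop) or terminate (East scoop) along one and the same full polar serpent, which together with Definition~\ref{def:1.3}(ii)--(iv) forces $\mathcal{A}_\delta$ to be a single topological disk whose boundary circle is the union of the two polar serpents through $\mathbf{N}$ and $\mathbf{S}$. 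On such a disk, for a heteroclinically related pair $v\leadsto w$ the bound $z(w-v)<i(v)\le 2$ is automatic, cf.\ \cite{brfi86}; for an unrelated pair $v\neq w$ one exhausts the finitely many cases (two face sources; a source $v$ and a boundary vertex $w\notin\partial W^u(v)$; two boundary vertices) and excludes $z(v-w)\ge 2$ by the $z$-dropping / blocking argument of \cite{brfi88, brfi89}, applied along the heteroclinic orbits emanating from the vertex of higher Morse index, exactly as in the proof of Proposition~\ref{prop:2.3}. This blocking step, for which the full-serpent disk structure is genuinely needed, is the main obstacle. Finally \eqref{eq:4.14} follows from the same circle of ideas applied to a further scoop of $\mathcal{A}_\delta$ down to the polar serpent alone: a one-dimensional chain attractor with Morse numbers alternating $0,1,0,1,\ldots$, along which the elementary dropping argument applied to the orbits from each saddle to its two neighbouring sinks gives zero number $0$ between every pair of equilibria.
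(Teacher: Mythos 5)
Your first half coincides with the paper's proof: the scoops are compositions of nose retractions (as justified after definition~\ref{def:4.2} via \eqref{eq:4.12}, \eqref{eq:4.13}), so proposition~\ref{prop:4.1} yields the Sturm property of $\sigma_\pm$ and the preservation of all Morse numbers and signed zero numbers on the surviving set $\mathrm{clos}\,{\mathcal{E}'_\delta}^2$, and removal of the unique $i=3$ vertex $\mathcal{O}$ makes $\mathcal{A}_\pm$ planar. Your identification $\mathcal{E}_{f_\pm}=\mathrm{clos}\,{\mathcal{E}'_\pm}^2$ is also correct and is exactly the point of the reduction.

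Where you diverge is in the zero-number bounds, and there your plan has a genuine gap. For \eqref{eq:4.15} the paper needs only one further fact: for \emph{any} planar Sturm attractor, distinct equilibria satisfy $z(v-w)\le 1$ (this is the classical observation the paper attributes to [Br90]); combined with $\mathcal{E}_{f_\delta}=\mathrm{clos}\,{\mathcal{E}'_\delta}^2$ and the invariance of $z$ under scooping, \eqref{eq:4.15} is immediate. You instead propose to re-derive this bound by first showing that $\mathcal{A}_\delta$ is a single topological disk bounded by the two polar serpents — an auxiliary claim that is not established at this stage of the paper (it is essentially the content of lemma~\ref{lem:6.2}, proved later) — and then by a case-by-case blocking argument that you yourself flag as ``the main obstacle'' and do not carry out. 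As sketched, the blocking argument does not close: $z$-dropping along heteroclinic orbits excludes connections, but \eqref{eq:4.15} is a statement about arbitrary pairs $v\neq w$, including pairs (e.g.\ two sinks, or two sources) with no heteroclinic orbit between them and no obvious orbit along which to drop; routing through common sources turns your argument into precisely the general planar theorem you are trying to avoid citing. Similarly, for \eqref{eq:4.14} your ``further scoop down to the polar serpent alone'' would itself require proving that such a sequence of nose retractions exists; the paper instead observes directly that $\mathrm{clos}\,{\mathcal{E}'_\delta}^1$ lies on a single \emph{full} polar serpent of the scooped meander $\check{h}_\iota$, and along a polar serpent the Morse numbers are in $\{0,1\}$ by \eqref{eq:1.20} while the recursion \eqref{eq:2.3c} forces $z=0$ between any two of its vertices. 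I recommend replacing your case analysis by the citation of the planar bound and the serpent observation; the rest of your argument can stand as written.
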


\begin{proof}[\textbf{Proof.}]
By definition~\ref{def:4.2}, the permutations $\sigma_\pm$ arise via successive nose reduction.
By proposition~\ref{prop:4.1}, the permutations are therefore Sturm.
Let $\sigma_{f_\pm} = \sigma_\pm$ with associated Sturm attractors $\mathcal{A}_\pm$.
By proposition~\ref{prop:4.1} again, all Morse numbers and zero numbers of $\sigma$ are inherited by $\sigma_\pm$.
By definition~\ref{def:1.3}(i) and the scooping of $\mathcal{O}$, the resulting Morse numbers $i_v=i(v)$ cannot exceed 2.
Therefore $\mathcal{A}_\pm$ are planar Sturm attractors.
In particular \eqref{eq:4.15} holds on $\mathcal{A}_+$ and on $\mathcal{A}_-$, respectively; this observation goes back as far as \cite{br90}.
This proves claim \eqref{eq:4.15} on ${\mathcal{E}'_\delta}^2 \subseteq \mathcal{A}_g$.

To prove claim \eqref{eq:4.14} we note that ${\mathcal{E}'_-}^1$ is part of the full $\mathbf{S}$-polar serpents $\check{h}_\iota$ after the West scoop.
By definition, \eqref{eq:4.14} holds along polar serpents.
The argument for ${\mathcal{E}'_+}^1$ is analogous.
This proves the proposition.
\end{proof}

%%%%%%%%%%%%%%%%%%%%%%%%%%%%%%%%%%%%%%%%%%%%%%%%%%%%%%%%%%%

\section{Sturm 3-balls from 3-meander templates}
\label{sec5}

We continue our analysis of the global attractor $\mathcal{A}_f$ associated to the Sturm permutation $\sigma_f = \sigma$ of the general 3-meander template $\mathcal{M},\sigma$ from definition~\ref{def:1.3} and fig.~\ref{fig:1.5}.
In theorem~\ref{thm:5.1} we state that $\mathcal{A}_f$ is a Sturm 3-ball.
In other words,
	\begin{equation}
	\mathcal{A}_f = \text{clos } W^u(\mathcal{O})
	\label{eq:5.1}
	\end{equation}
is the closure of the unstable manifold of the single equilibrium $\mathcal{O}$, at which the meander $\mathcal{M}$ crosses the horizontal $h_1$-axis with maximal Morse number $i_{\mathcal{O}} =3$; see definition~\ref{def:1.3}(i).
Our proof only requires to show the existence of heteroclinic orbits
	\begin{equation}
	\mathcal{O} \leadsto v\,,
	\label{eq:5.2}
	\end{equation}
for all equilibria $v \in \mathcal{E}_f \smallsetminus \lbrace \mathcal{O}\rbrace$.
In lemma~\ref{lem:5.2} we therefore recall the Wolfrum version of heteroclinicity in Sturm attractors, based on zero number.
The required input is collected in proposition~\ref{prop:5.3}, so that we can conclude this section with the proof of theorem~\ref{thm:5.1}.

\begin{thm}\label{thm:5.1}
Any 3-meander template $\mathcal{M},\sigma$ defines a Sturm 3-ball attractor $\mathcal{A}_f$ with Sturm permutation $\sigma_f=\sigma$ and meander $\mathcal{M}$.
\end{thm}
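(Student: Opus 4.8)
The plan is to reduce the assertion to a purely combinatorial statement about heteroclinic orbits issuing from $\mathcal{O}$, and then to verify that statement from the $3$-meander template axioms by means of the Wolfrum blocking criterion together with the scoops of Section~\ref{sec4}. Since a $3$-meander template $\mathcal{M},\sigma$ is, by definition~\ref{def:1.3}, in particular a dissipative Morse meander, \cite{firo96} furnishes a dissipative nonlinearity $f$ with hyperbolic equilibria such that the Sturm permutation equals $\sigma_f=\sigma$ and the shooting meander equals $\mathcal{M}$; let $\mathcal{A}_f$ denote its Sturm global attractor. By \eqref{eq:2.3b} the Morse indices $i(v)=i_v$ are those prescribed by $\sigma$, so definition~\ref{def:1.3}(i) singles out $\mathcal{O}$ as the unique equilibrium of Morse index $3$, all others having $i(v)\le 2$. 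It remains to establish \eqref{eq:5.1}, i.e. $\mathcal{A}_f=\text{clos }W^u(\mathcal{O})$; the Schoenflies result \cite{firo13} then makes this closure an embedded closed $3$-ball, justifying the terminology. Now $\mathcal{A}_f=\bigcup_{v\in\mathcal{E}_f}W^u(v)$, and in the present Morse--Smale Thom--Smale setting $\text{clos }W^u(\mathcal{O})$ is precisely the union of $W^u(\mathcal{O})$ with those $W^u(v)$ for which $\mathcal{O}\leadsto v$ \cite{he85}. Hence it suffices to prove the heteroclinic orbits \eqref{eq:5.2}: $\mathcal{O}\leadsto v$ for every $v\in\mathcal{E}_f\smallsetminus\{\mathcal{O}\}$.

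To establish \eqref{eq:5.2} I would invoke lemma~\ref{lem:5.2}, the Wolfrum blocking criterion: given the signed zero number $z(\mathcal{O}-v)=j_\delta$, the connection $\mathcal{O}\leadsto v$ holds unless some equilibrium $y$ lies $z$-between $\mathcal{O}$ and $v$ with $z(\mathcal{O}-y)=z(y-v)=j$. Partition $\mathcal{E}_f\smallsetminus\{\mathcal{O}\}$ into the candidate hemisphere sets ${\mathcal{E}'_\delta}^j$ of \eqref{eq:4.9}, and treat each class separately; the required zero numbers, and above all the absence of any blocking $y$, are to be collected in proposition~\ref{prop:5.3}. The decisive device here are the two planar scoop attractors $\mathcal{A}_\pm$ of definition~\ref{def:4.2} and proposition~\ref{prop:4.3}. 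By proposition~\ref{prop:4.1} they inherit all Morse numbers and all signed zero numbers of $\sigma$, so the planar bounds \eqref{eq:4.14}, \eqref{eq:4.15} apply inside $\text{clos }{\mathcal{E}'_\delta}^2$ and cap the mutual zero numbers there at $1$; consequently no candidate blocker of zero number $2$ for a connection $\mathcal{O}\leadsto v$, $v\in{\mathcal{E}'_\delta}^2$, can reside in the same hemisphere as $v$. A blocker sought in the opposite hemisphere, on the other hand, is excluded by the $3$-meander axioms: the overlap of anti-polar serpents (definition~\ref{def:1.3}(ii)), the location of $\mathcal{O}$ between the two endpoints of every polar arc (definition~\ref{def:1.3}(iii)), and the identification of the $h_\iota$-neighbours $w_\pm^\iota$ of $\mathcal{O}$ with the $h_{1-\iota}$-extreme sources (definition~\ref{def:1.3}(iv)) together force the zero number strictly above $j$ as soon as one tries to cross a meridian.

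The main obstacle is precisely this no-blocking verification. Concretely, for each hemisphere class one must compare $z(\mathcal{O}-v)$ with all possible pairs $(z(\mathcal{O}-y),z(y-v))$, using the explicit recursion \eqref{eq:2.3c} for zero numbers together with the meridian-overlap and serpent-positioning hypotheses; the most delicate cases are the $1$-hemispheres ${\mathcal{E}'_\delta}^1$ and the poles ${\mathcal{E}'_\delta}^0$, where a potential blocker may come either from the interior of a hemisphere or from across a meridian. Everything else is routine: the direct connections $\mathcal{O}\leadsto w_\pm^\iota$ to the meander-neighbours of $\mathcal{O}$ follow from lemma~\ref{lem:5.2} since nothing sits between $\mathcal{O}$ and an immediate neighbour, and the remaining equilibria are reached by cascading these connections through the planar disks $\mathcal{A}_\pm$. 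With proposition~\ref{prop:5.3} in hand, assembling the cases and applying lemma~\ref{lem:5.2} completes the proof of \eqref{eq:5.1}, and hence of theorem~\ref{thm:5.1}.
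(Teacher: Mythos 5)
Your proposal follows the paper's proof essentially verbatim: realize $\sigma=\sigma_f$ via \cite{firo96}, reduce to establishing $\mathcal{O}\leadsto v$ for all $v\neq\mathcal{O}$, and verify $z(v-\mathcal{O})$-adjacency via lemma~\ref{lem:5.2} using the decomposition \eqref{eq:4.9} and proposition~\ref{prop:5.3}, whose proof in turn rests on the scoops of propositions~\ref{prop:4.1} and~\ref{prop:4.3}. One remark: the same-/opposite-hemisphere case split you anticipate for the no-blocking step is unnecessary, since any blocker $w$ with $z(w-\mathcal{O})=j_\delta$ already lies in ${\mathcal{E}'_\delta}^j$ by proposition~\ref{prop:5.3}(iii) and the partition \eqref{eq:5.4}, whereupon proposition~\ref{prop:5.3}(iv) yields $z(v-w)<j$ and the contradiction --- in particular, a would-be blocker across a meridian is excluded because the sign $\delta$ of its zero number relative to $\mathcal{O}$ flips, not because the magnitude rises \emph{above} $j$ as you suggest.
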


The notion of $k$-\emph{adjacency} is central for Wolfrum's reformulation, in \cite{wo02}, of the heteroclinicity results in \cite{firo96, firo99}.
We say two distinct equilibria $v_1, v_2$ are $k$-\emph{adjacenct} if there does not exist a third equilibrium $w$ between them, say at $x=0$, such that the signed zero numbers 
	\begin{equation}
	z\left( w-v_1\right) = k_\pm = z\left( v_2-w \right)
	\label{eq:5.3}
	\end{equation}
coincide with either $k_+$ or $k_-$, depending on the sign in $\pm(v_2(0)-v_1(0))>0$.

\begin{lem}[\textbf{[Wo02]}]\label{lem:5.2}
Let $\mathcal{A}_f$ be a Sturm global attractor with distinct equilibria $v_1, v_2 \in \mathcal{E}_f$.
Then $v_1 \leadsto v_2$ if, and only if, $i(v_1) > i(v_2)$ and $v_1, v_2$ are $z(v_2 -v_1)$-adjacent.
\end{lem}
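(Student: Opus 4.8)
The plan is to split the equivalence into its two implications, of which only the forward one is elementary. \textbf{Necessity.} Assume $v_1 \leadsto v_2$ via an entire orbit $u(t,\cdot)$, $t\in\mathbb{R}$, with $u(-\infty,\cdot)=v_1$, $u(+\infty,\cdot)=v_2$. The Morse-index inequality is immediate from what has already been recalled: $v_1\leadsto v_2$ means $v_2\in\partial W^u(v_1)=\Sigma^{i(v_1)-1}(v_1)$, whence $i(v_2)\le i(v_1)-1<i(v_1)$ by proposition~\ref{prop:2.2}(i). For the adjacency, I would argue by contradiction: suppose some equilibrium $w$ lies strictly between $v_1$ and $v_2$ at $x=0$, with $z(w-v_1)=z(v_2-w)=k_{\pm}$, where $k:=z(v_2-v_1)$ and the sign $\pm$ is the one dictated by $\mathrm{sign}(v_2(0)-v_1(0))$.

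The contradiction comes from the zero-number dropping \eqref{eq:1.4} combined with the Neumann boundary. Since distinct hyperbolic equilibria share no tangency in the $(v,v_x)$-plane by ODE uniqueness, each difference $v_i-w$ has only simple zeros; hence $z(u(t,\cdot)-w)$ stabilizes at $z(v_1-w)=k$ for $t\ll 0$ and at $z(v_2-w)=k$ for $t\gg 0$. Being nonincreasing, $t\mapsto z(u(t,\cdot)-w)$ is then constant $\equiv k$. On the other hand the sign hypotheses force $w(0)$ strictly between $u(-\infty,0)=v_1(0)$ and $u(+\infty,0)=v_2(0)$, so $u(t_0,0)=w(0)$ for some $t_0\in\mathbb{R}$ by the intermediate value theorem. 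At $x=0$ the function $u(t_0,\cdot)-w$ then has a zero whose $x$-derivative also vanishes, by the Neumann conditions $u_x(t_0,0)=w_x(0)=0$; this multiple zero forces a strict drop of $z$ across $t_0$ by \eqref{eq:1.4}, contradicting constancy. Hence no blocking $w$ exists and $v_1,v_2$ are $z(v_2-v_1)$-adjacent, completing the ``only if'' direction. (This is the elementary blocking mechanism of \cite{brfi88, brfi89}.)

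\textbf{Sufficiency.} The reverse implication is the actual content of Wolfrum's reformulation \cite{wo02} of the heteroclinic characterization of \cite{firo96, firo99}, and I would obtain it from there rather than reprove it from scratch. The natural route is induction on the Morse-index gap $i(v_1)-i(v_2)$. For a gap of one, the equilibria $w$ with $v_1\leadsto w$ are precisely the cells of $\partial W^u(v_1)=\Sigma^{i(v_1)-1}(v_1)$, and for those of codimension one in this sphere I would use the signed hemisphere decomposition \eqref{eq:2.5}--\eqref{eq:2.9}, the bound $z(w-v_1)\le i(v_1)-1$ of proposition~\ref{prop:2.2}(ii), and the blocking principle above to identify them as exactly the $v_2$ that are $z(v_2-v_1)$-adjacent to $v_1$. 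For a larger gap, the no-blocking hypothesis together with the sphere structure of $\partial W^u(v_1)$ should yield an intermediate equilibrium $w$, with $i(v_1)>i(w)>i(v_2)$, relative to which $v_1$ and $v_2$ remain adjacent; then $v_1\leadsto w\leadsto v_2$ by the inductive hypothesis, and transitivity of $\leadsto$ (the cascading principle recalled in Section~\ref{sec1}) closes the loop. The genuine obstacle is the base case: manufacturing an honest connecting orbit between $v_1$ and an index-$(i(v_1)-1)$ equilibrium out of the purely ordinal ``no blocking'' data. The elementary zero-number arguments only deliver necessity of adjacency and the inductive reductions, never the existence of an orbit; the construction step is exactly where the deep combinatorial machinery of \cite{firo96}, resp.\ the Schoenflies and hemisphere results of \cite{firo13, firo3d-1}, is indispensable, and it is at this point that I would invoke \cite{wo02} (equivalently \cite{firo96}) outright.
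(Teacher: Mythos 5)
Your necessity argument is correct and coincides with the paper's own sketch: the Morse inequality follows from $v_2\in\partial W^u(v_1)$ together with proposition~\ref{prop:2.2}(i), and the blocking contradiction between the constancy $z(u(t,\cdot)-w)\equiv k$ (forced by the simple zeros of $v_i-w$ and the limits at $t=\pm\infty$) and the strict drop of $z$ at the Neumann crossing $u(t_0,0)=w(0)$ is exactly the argument recalled after the statement of lemma~\ref{lem:5.2} and used already in the proof of proposition~\ref{prop:2.3}.

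The gap lies in how you dispose of sufficiency. You propose to cite \cite{wo02} (equivalently \cite{firo96}) ``outright'', but the paper's entire proof content for this lemma is the appendix, section~\ref{sec7}, whose sole purpose is to point out that Wolfrum's original proof of precisely this direction is incomplete, and to repair it. Concretely: Wolfrum manufactures the connecting orbit by reducing to the realization results of \cite{firo99} via sequences of saddle-node bifurcations, i.e.\ nose insertions and retractions in the meander. The relevant result, \cite[lemma~3.1]{firo99}, however, presupposes that the shooting meander is in canonical form, which is achieved only by retracting the nose in a prescribed rotational sense towards the reduced number of equilibria; the local saddle-node bifurcations invoked in \cite{wo02} produce meanders which, neither before nor after the bifurcation, need be canonical, and this restriction is not addressed there. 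The repair requires the global rigidity theorem of \cite{firo00} --- Sturm attractors with identical Sturm permutations are $C^0$ orbit-equivalent --- so that the permutations on either side of each local saddle-node can be re-realized by canonical shooting meanders. Your blanket deferral to \cite{wo02} therefore inherits exactly the hole that this paper sets out to close. (Your heuristic induction on the Morse gap, with its sphere/hemisphere base case, is not how the cited proof proceeds either, but since you defer to \cite{wo02} in any case, that is secondary; the essential missing ingredient is the canonical-form issue and its resolution via \cite{firo00}.)
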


We comment on the proof of this lemma in the appendix.
Suffice it here to recall how violation of $k$-adjacency, i.e. the existence of an in-between equilibrium $w$ with \eqref{eq:5.3}, blocks the existence of a heteroclinic orbit $u(t, \cdot )$ between $v_1$ and $v_2$.
Indeed the zero number $z(u(t, \cdot )-w)$ would have to drop strictly, when the boundary values of $u(t, \cdot )$ and $w$ cross each other at $x=0$ or at $x=1$.
For $t \rightarrow \pm\infty$, on the other hand, that zero number has to coincide with $k$.
For $k=0$, we have already encountered such a \emph{blocking argument} in the proof of proposition~\ref{prop:2.3}.
See also \cite{brfi89}.

Based on the decomposition \eqref{eq:4.9} of the equilibrium set
	\begin{equation}
	\mathcal{E}_f \smallsetminus \mathcal{O} =
	\bigcup\limits_{j\leq 2,\, \delta = \pm} 
	{\mathcal{E}'_\delta}^j\,,
	\label{eq:5.4}
	\end{equation}
in the Sturm attractor $\mathcal{A}_f$ of the 3-meander template $\mathcal{M}$ with $\sigma = \sigma_f$, we now collect information on the zero numbers on these sets.
This information coincides, verbatim, with the corresponding statements of \cite[proposition~3.1]{firo3d-1} on the hemisphere decomposition
	\begin{equation}
	\mathcal{E}_f \smallsetminus \mathcal{O} =
	\bigcup\limits_{j\leq 2,\, \delta = \pm} 
	\mathcal{E}_\delta^j
	\label{eq:5.5}
	\end{equation}
by the equilibrium sets $\mathcal{E}_\delta^j = \mathcal{E}_f \cap \Sigma_\delta^j$.
A posteriori, i.e. after theorem~\ref{thm:5.1} is proved and $\mathcal{A}_f$ is identified as a Sturm 3-ball, indeed, we will have arrived at the identification
	\begin{equation}
	{\mathcal{E'}}_\delta^j = \mathcal{E}_\delta^j
	\label{eq:5.6}
	\end{equation}
for all $0 \leq j \leq 2$ and both signs $\delta = \pm$.
For the moment, however, \cite[proposition~3.1]{firo3d-1} cannot be invoked and we must prove the following version, independently.
See fig.~\ref{fig:5.1} for an illustration of this result, but not its proof.

\begin{figure}[t!]
\centering \includegraphics[width=\textwidth]{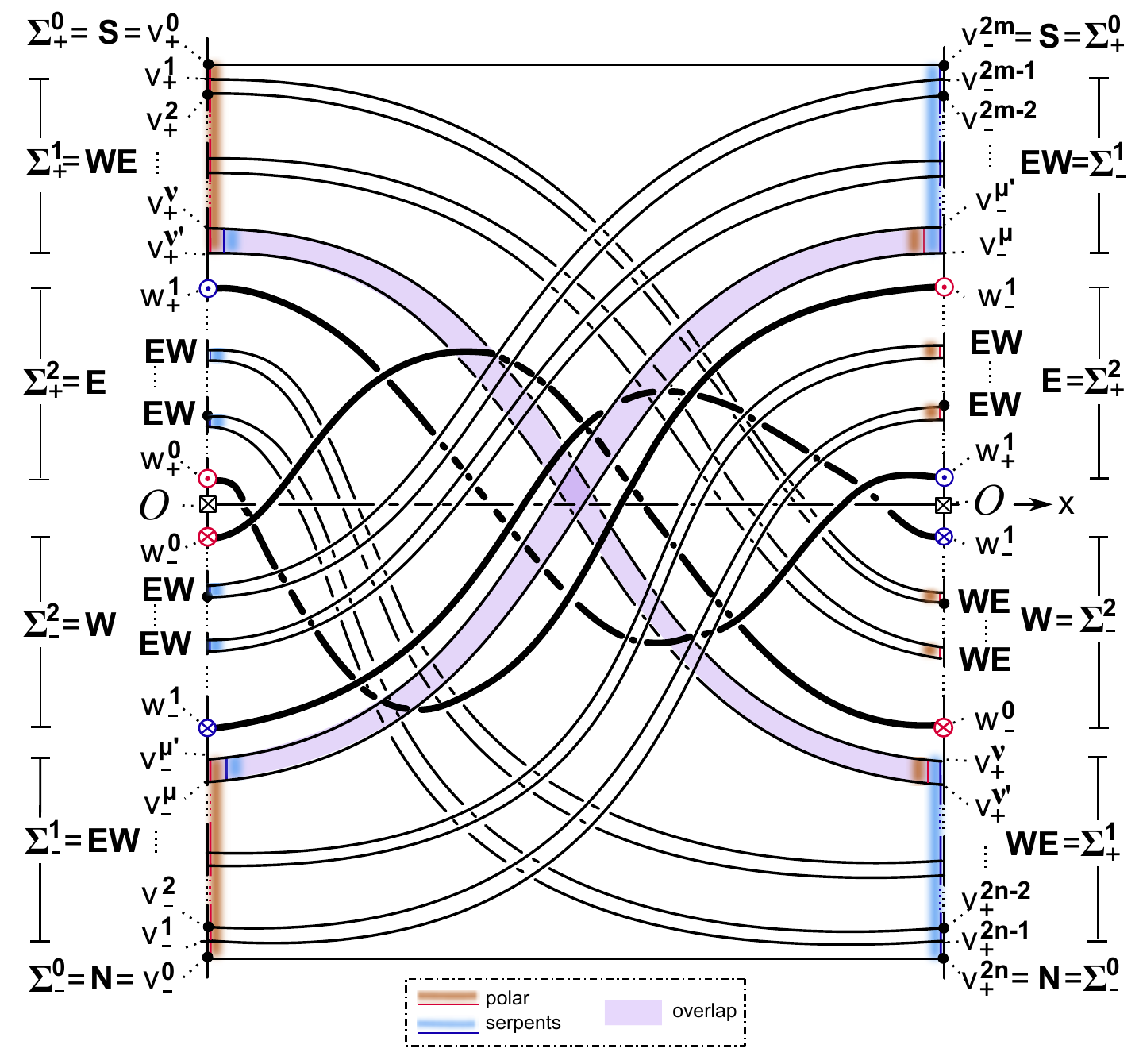}
\caption{\emph{
An impressionist sketch of the spatial profiles $v(x)$, for all equilibria $v\in \mathcal{E}_f$ of a general Sturm 3-meander template.
The drawing illustrates the results of proposition~\ref{prop:5.3}, as well as certain aspects of definition~\ref{def:1.3}.
For the specific case of a solid octahedron see also fig.~\ref{fig:1.1}.
}}
\label{fig:5.1}
\end{figure}

\begin{prop}\label{prop:5.3}
In the above setting and with the notation \eqref{eq:4.9} for the equilibrium sets ${\mathcal{E}'}_\pm^j$, the following statements hold true for all $0\leq j\leq 2$ and $\delta = \pm$.
\begin{align}
v \in {\mathcal{E}'}^j
\quad &\Longrightarrow \quad i(v) \leq j\tag{i}\\ 
v \in \text{clos } {\mathcal{E}'}^j
\quad &\Longrightarrow \quad z(v-\mathcal{O}) \leq j\tag{ii}\\
v \in {\mathcal{E}'}_\pm^j
\quad &\Longrightarrow \quad z(v-\mathcal{O})= j_\pm\tag{iii}\\
v_1, v_2 \in \text{clos } {\mathcal{E}'_\delta}^j
\quad &\Longrightarrow \quad z(v_1-v_2) < j\,.\tag{iv}
\end{align}
\end{prop}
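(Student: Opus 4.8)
The plan is to establish the four claims in tandem, exploiting the explicit description of the scoops from Section~\ref{sec4} together with Proposition~\ref{prop:4.1} (invariance of Morse and signed zero numbers under nose retraction) and Proposition~\ref{prop:4.3} (the scoops are planar Sturm attractors). First I would fix the combinatorial bookkeeping: by definition~\ref{def:1.3}(i) every Morse number other than $i_\mathcal{O}=3$ is at most $2$, and by \eqref{eq:4.9} the sets ${\mathcal{E}'_-}^1$, ${\mathcal{E}'_+}^1$ consist precisely of the polar-serpent vertices with Morse number in $\{0,1\}$, while ${\mathcal{E}'_\delta}^2$ collects the remaining non-$\mathcal{O}$ vertices $v$ with $\delta\mathcal{O}<\delta v$ at $x=1$. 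Claim~(i) is then almost immediate: vertices in ${\mathcal{E}'}^0$ are the poles with $i=0$; vertices in ${\mathcal{E}'_\delta}^1$ lie on polar serpents, hence have $i_v\in\{0,1\}$ by \eqref{eq:1.20}; and ${\mathcal{E}'}^2=\mathcal{E}_f\setminus\{\mathcal{O}\}$, all of whose Morse numbers are $\le 2$. So (i) is really a restatement of the serpent condition and definition~\ref{def:1.3}(i).

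For claims~(iii) and~(ii) I would compute $z(v-\mathcal{O})$ directly from the meander combinatorics. The key observation is that the two polar $h_1$-serpents are terminated by the sources $w_\pm^1$, which are the $h_0$-extreme sources (definition~\ref{def:1.3}(iv), cf.\ fig.~\ref{fig:1.5}), and dually for the $h_0$-serpents; combined with condition~(iii) of definition~\ref{def:1.3} (that $\mathcal{O}$ lies between the two intersection points of each polar arc in the $h_{1-\iota}$-order), this pins down $\mathrm{sign}(v(0)-\mathcal{O}(0))$ for every serpent vertex $v$. Concretely, for $v\in{\mathcal{E}'_-}^1$ one reads off $v<\mathcal{O}$ at $x=0$ and $z(v-\mathcal{O})\le 1$ because $v$ lies on an $\mathbf N$-polar serpent whose profiles stay in the ``lower'' region relative to $\mathcal{O}$; since $i(v)\le 1$ and $z(w-\mathcal{O})<i(\mathcal{O})=3$ always (by \cite{brfi86}, quoted after \eqref{eq:2.3}), the value $z(v-\mathcal{O})$ is forced to be exactly $1_-$ — the subscript $-$ from the $x=0$ ordering, the value $1$ from the serpent parity. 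The symmetric argument handles ${\mathcal{E}'_+}^1$, and for $v\in{\mathcal{E}'_\delta}^2$ one similarly argues $z(v-\mathcal{O})\le 2$ with sign $\delta$ from the defining inequality $\delta\mathcal{O}<\delta v$ at $x=1$, upgrading to $2_\delta$ whenever $v$ is not already in ${\mathcal{E}'}^1$. Here I would lean on the recursion \eqref{eq:2.3c} for $z_{wv}$ applied along the meander, exactly as in \cite{ro91,firo96}, rather than on any PDE input; alternatively one can mimic the blocking argument of proposition~\ref{prop:2.3}. Claim~(ii) for $\mathrm{clos}\,{\mathcal{E}'}^j$ then follows by taking the union over $k\le j$ and noting that closures only add lower-stratum vertices.

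Claim~(iv) is where the scoops do the real work, and I expect it to be the main obstacle. For $j=2$, by proposition~\ref{prop:4.3} the set $\mathrm{clos}\,{\mathcal{E}'_\delta}^2$ sits inside the planar Sturm attractor $\mathcal{A}_\delta$ (the $\delta$-scoop), where every Morse number is $\le 2$; and it is a standard fact for planar Sturm attractors — going back to \cite{br90}, and already invoked in the proof of proposition~\ref{prop:4.3} — that any two equilibria in such an attractor have mutual zero number $\le 1$, which is precisely $z(v_1-v_2)<2$. Since proposition~\ref{prop:4.1} guarantees that $z(v_1-v_2)$ is unchanged by the nose retractions performed in the scoop, this value computed inside $\mathcal{A}_\delta$ equals the one in $\mathcal{A}_f$, giving (iv) for $j=2$. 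For $j=1$: $\mathrm{clos}\,{\mathcal{E}'_\delta}^1$ consists of the $\delta$-polar serpent vertices together with the poles, and along a polar serpent all profiles are ordered without crossing (the serpent follows a monotone polar arc of the meander), so $z(v_1-v_2)=0<1$ — this is exactly \eqref{eq:4.14} of proposition~\ref{prop:4.3}, which I would simply cite. For $j=0$ the statement $z(v_1-v_2)<0$ is vacuous since $\mathrm{clos}\,{\mathcal{E}'_\delta}^0={\mathcal{E}'_\delta}^0$ is a single pole. The one point requiring care is that the scoop $\mathcal{A}_\delta$ contains $\mathrm{clos}\,{\mathcal{E}'_\delta}^2$ but also has its own, possibly richer, heteroclinic structure; however (iv) only concerns zero numbers between surviving vertices, and those are scoop-invariant, so the extra connections in $\mathcal{A}_\delta$ are irrelevant. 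Assembling these pieces — (i) from serpent/Morse bookkeeping, (ii)–(iii) from the explicit $z$-recursion and blocking arguments, (iv) from the planarity of the scoops via propositions~\ref{prop:4.1} and~\ref{prop:4.3} — completes the proof.
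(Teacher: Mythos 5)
Your overall architecture matches the paper's: (iv) is vacuous for $j=0$; (i) and (iv) for $j=1,2$ are delegated to proposition~\ref{prop:4.3}; (ii) reduces to (iii) by the definition of the closures in \eqref{eq:4.9}; and (iii) along the polar serpents is obtained from the recursion \eqref{eq:2.3c}, initialized at the pole $\mathbf{N}$ and driven by definition~\ref{def:1.3}(iii), which forces $s_2=\dots=s_{\mu'+1}=1/2$ and hence $\zeta=1$ along the $\mathbf{N}$-polar $h_0$-serpent. Up to that point your proposal is sound.

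The genuine gap is claim~(iii) for ${\mathcal{E}'_\delta}^2$, which you dispose of with ``one similarly argues $z(v-\mathcal{O})\le 2$ \dots\ upgrading to $2_\delta$''. Neither half of that sentence is justified. The recursion changes $\zeta_j=z(h_0(j)-\mathcal{O})$ by at most one per step, and starting from $\zeta=2$ at $w_-^1$ nothing yet prevents it from drifting down to $0$, or up to $3$, somewhere along the $h_0$-segment between $w_-^1$ and $w_-^0$; the a priori bound $z(u-v)<i(v)$ from \cite{brfi86} applies only to $u\in\text{clos}\,W^u(\mathcal{O})$, which is exactly what theorem~\ref{thm:5.1} is still trying to establish, so it cannot be invoked here. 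The paper closes this by a Jordan-curve \emph{trapping} argument: the meander segment from the entry $w_-^1$ to the exit $w_-^0$ is confined to the region bounded by the curve $\Gamma$ of \eqref{eq:5.14} (built from serpent arcs, the extremality of $w_\pm^\iota$, and the serpent overlap), the trapped equilibria fall into exactly two types according to their $h_1$-position relative to $\mathcal{O}$ as in \eqref{eq:5.16}, and a type change can only occur via a lower meander arc, hence at even $j$; feeding this into \eqref{eq:5.7} forces $\zeta_j\in\{1,2\}$ with $\zeta_j=2$ precisely on ${\mathcal{E}'_-}^2$. Without this (or an equivalent) argument the central case of (iii) --- and with it the adjacency needed for theorem~\ref{thm:5.1} --- is unproven. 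A minor additional point: the subscript in $z(v-\mathcal{O})=2_\delta$ records the sign at $x=0$, whereas ${\mathcal{E}'_\delta}^2$ is defined by the order at $x=1$; these agree only because $z=2$ is even, so your sign assignment presupposes the value you are trying to establish.
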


\begin{proof}[\textbf{Proof.}]
Claim~(iv) is void for $j=0$.
For $j=1,2$, claims~(i),(iv) have already been proved in proposition~\ref{prop:4.3}.
Claims~(i),~(iii) for $j=0$ just reiterate $i_v=0$ for $v\in \lbrace h_\iota^f(1), \, h_\iota^f(N)\rbrace$, by dissipativeness; see \eqref{eq:1.17}, \eqref{eq:1.18}, \eqref{eq:2.3c}.
Claim~(ii) follows from claim~(iii), by definition \eqref{eq:4.9} of the sets $\text{clos }{\mathcal{E}'}^j$.

Therefore it only remains to prove claim~(iii).
Although it is possible to invoke scoops, except for the last nose retraction involving $\mathcal{O}$ itself, we proceed more directly this time.
With the abbreviations $\zeta_j$:= $z(h_0(j)-\mathcal{O})$, for the unsigned zero numbers, and with $s_j$:= $\tfrac{1}{2}\text{sign}( \sigma^{-1}(j)-h_1^{-1}(\mathcal{O}))$, the explicit recursion \eqref{eq:2.3c} reads
	\begin{equation}
	\zeta_{j+1} - \zeta_j = (-1)^{j+1} (s_{j+1} -s_j)\,;
	\label{eq:5.7}
	\end{equation}
see also \eqref{eq:4.7}. Here $1 \leq j <k$ and $k=h_0^{-1}(\mathcal{O})$ in \eqref{eq:2.3c}.
Note that $h_1^{-1} (\mathcal{O}) = h_1^{-1}h_0(k) = \sigma^{-1}(k)$.
We omit sub- and superscripts $f$ in this proof.
We only prove claim~(iii) for ${\mathcal{E}'_-}^j$; the cases of ${\mathcal{E}'_+}^j$ are analogous by the trivial equivalence $u \mapsto -u$.

The recursion \eqref{eq:5.7} is initialized with
	\begin{equation}
	\zeta_1 = 0\,,\quad s_1= -1/2\,,\quad h_0(1) <_0 \mathcal{O}\,,
	\label{eq:5.8}
	\end{equation}
by dissipativeness.
This proves claim~(iii) for the pole $\mathbf{N}= h_\iota(1)$ and settles $j=0$.

We follow the meander path of $h_0$ along the $\mathbf{N}$-polar $h_0$-serpent
	\begin{equation}
	h_0: \qquad 
	\mathbf{N}\ v_-^1v_-^2 \ldots v_-^\mu \ldots 
	v_-^{\mu'}\ldots
	\label{eq:5.9}
	\end{equation}
up to $v_-^{\mu'}$, next.
By definition~\ref{def:1.3}(iii), we have
	\begin{equation}
	\mathbf{N} = h_0(1) <_1 \mathcal{O} <_1 v_-^1 <_1 v_-^2 <_1
	\ldots <_1 v_-^\mu <_1 \ldots <_1 v_-^{\mu'}
	\label{eq:5.10}
	\end{equation}
along that serpent.
See also fig.~\ref{fig:1.5}.
Since $v_-^j= h_0(j+1)$, for $0 \leq j\leq \mu'$, this implies
	\begin{equation}
	s_2 = \ldots s_{\mu+1} = \ldots s_{\mu'+1} = 1/2\,.
	\label{eq:5.11}
	\end{equation}
With recursion \eqref{eq:5.7} and initialization \eqref{eq:5.8} this proves
	\begin{equation}
	\zeta_2= \ldots = \zeta_{\mu+1} = \ldots \zeta_{\mu'+1} = 1\,.
	\label{eq:5.12}
	\end{equation}

\begin{figure}[t!]
\centering \includegraphics[width=\textwidth]{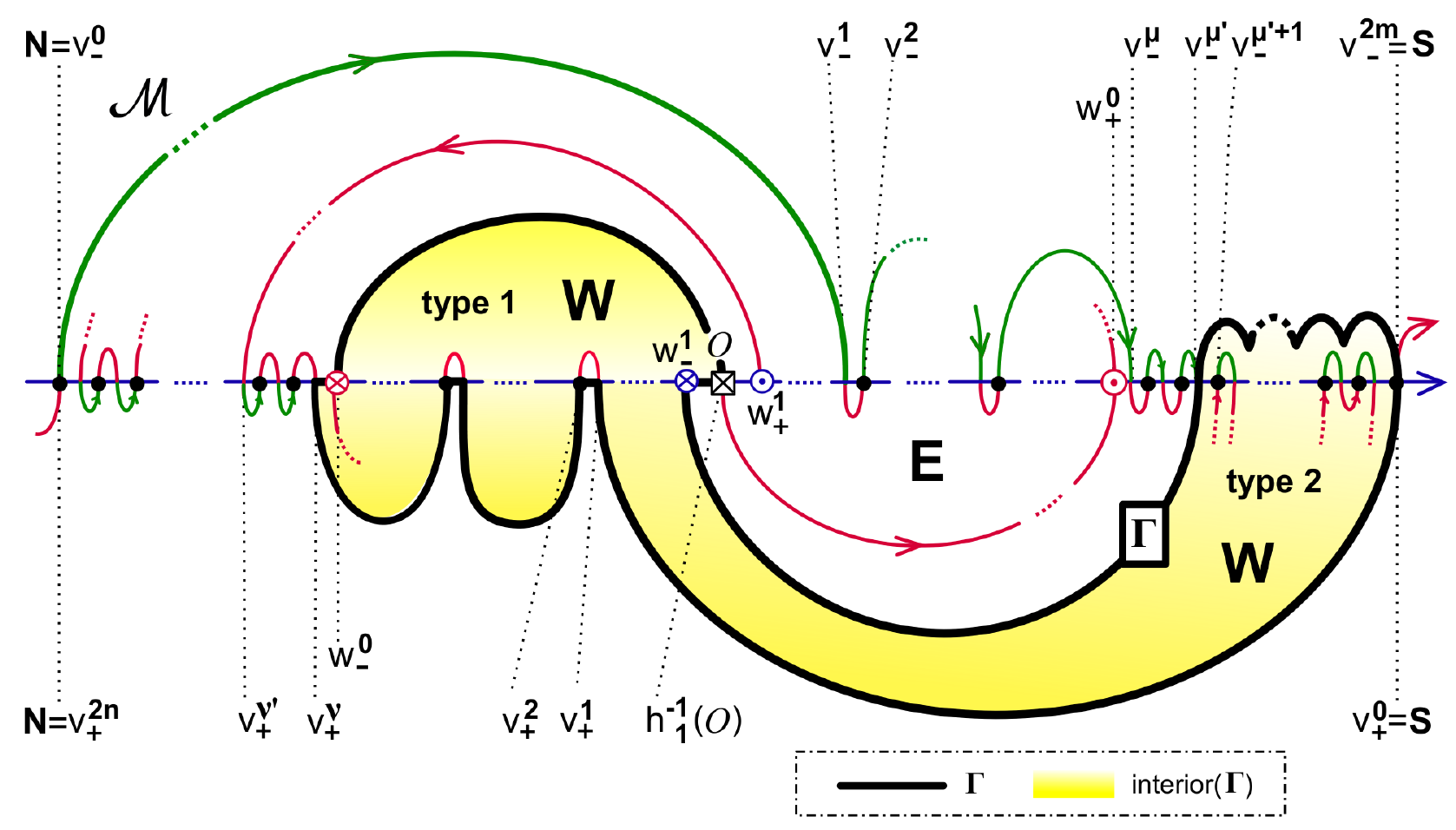}
\caption{\emph{
The trapping region interior$(\Gamma)$, in a West scoop, and the types of trapped equilibria of a 3-meander template; see the proof of proposition~\ref{prop:5.3}, \eqref{eq:5.14}--\eqref{eq:5.16}.
}}
\label{fig:5.2}
\end{figure}

By definition~\ref{def:1.3}(iv), the $\mathbf{N}$-polar $h_0$-serpent is terminated by $w_-^1$ or $w_+^1$.
In fact $\mathcal{O} \leadsto w_\pm^\iota$, because $h_\iota$-neighbors cannot be blocked.
Hence $i(\mathcal{O}) =3$ implies $i(w_\pm^\iota) = 2$ and, by \eqref{eq:2.3c},
	\begin{equation}
	z\left( w_\pm^\iota -\mathcal{O}\right) = 2_\pm\,.
	\label{eq:5.13}
	\end{equation}
This proves that the $h_0$-successor of the serpent termination $v_-^{\mu'}$ is $w_-^1$, rather than $w_+^1$; see fig.~\ref{fig:1.5} again.
Similarly, the $h_0$-predecessor $w_-^0$ of $\mathcal{O}$ terminates the $\mathbf{N}$-polar $h_1$-serpent $v_+^{2n} \ldots v_+^\nu$ along the $h_1$-axis.
By the Jordan curve property, this traps the meander segment $\mathcal{M}$, from the entry $w_-^1$ to the exit $w_-^0$, inside the trapping region defined by the Jordan curve
	\begin{equation}
	\Gamma: \qquad
	v_-^{\mu'} w_-^1 \ \mathcal{O}\ w_-^0 v_+^\nu \ldots
	v_+^2v_+^1\ \mathbf{S}\,.
	\label{eq:5.14}
	\end{equation}
See fig.~\ref{fig:5.2}.
Here $\Gamma$ consists of $h_0$- and $h_1$-arcs, alternatingly, and terminates with the part $v_+^\nu \ldots \mathbf{S}$ of the $\mathbf{S}$-polar $h_0$-serpent.
The Jordan curve $\Gamma$ is not closed.
We consider the remaining part
	\begin{equation}
	v_-^{\mu'+1} \ldots v_-^{2m-1}
	\label{eq:5.15}
	\end{equation}
of the $\mathbf{S}$-polar $h_1$-serpent to still be inside the trapping region $\Gamma$ of our meander $\mathcal{M}$ segment from $w_-^1$ to $w_-^0$.

Equilibrium vertices $v = h_0(j)$ inside $\Gamma$ consist of two types:
	\begin{equation}
	\begin{aligned}
	\text{type } 1&: \qquad
	w_-^0 \leq_1 v \leq_1 w_-^1 <_1 \mathcal{O}\,, \quad \text{and} \\
	\text{type } 2&: \qquad 
	v\in \lbrace v_-^{\mu'+1}\ , \ldots ,\, v_-^{2m-1}\rbrace
	>_1 \mathcal{O}\,.
	\end{aligned}
	\label{eq:5.16}
	\end{equation}
Suppose the meander path $h_0$ changes type along the $\mathcal{M}$-arc from $h_0(j)$ to $h_0(j+1)$.
We claim $j$ must be even.
Indeed, the trapping region $\Gamma$ ensures that a change of type can only occur via a lower $h_0$-arc of the meander $\mathcal{M}$.
Therefore the meander must cross the $h_1$-axis downward at $v_j$:= $h_0(j)$, upward at $v_{j+1}= h_0(j+1)$, and $j$ must be even.

The types distinguish the signs of $s_j$ to be
	\begin{equation}
	s_j=
	\left\lbrace 
	\begin{aligned}
	&-1/2\quad \text{for}\quad h_0(j) \text{ of type } 1\,;\\
	&+1/2\quad \text{for}\quad h_0(j) \text{ of type } 2\,.	
	\end{aligned}
	\right.
	\label{eq:5.17}
	\end{equation}
Indeed the relative ordering of $\sigma^{-1}(j) =h_1^{-1}(v_j)$ and $h_1 ^{-1}(\mathcal{O})$ distinguishes the type of $v_j= h_0(j)$.
In particular, the recursion \eqref{eq:5.7} determines the values $\zeta_j$ inside the trapping region, with the initialization $\zeta =2$ at $w_-^1$, to be
	\begin{equation}
	\zeta_j=
	\left\lbrace 
	\begin{aligned}
	&2\quad \text{for}\quad h_0(j) \text{ of type } 1\,;\\
	&1\quad \text{for}\quad h_0(j) \text{ of type } 2\,.	
	\end{aligned}
	\right.
	\label{eq:5.18}
	\end{equation}
Here we have used that $j$ is even at any type change from $v_j = h_0(j)$ to $v_{j+1}=h_0(j+1)$.
Hence \eqref{eq:5.17} implies a decrease of $\zeta_j$ by 1, upon passage from type~1 to type~2, and an increase by 1 upon return.
Without change of type, both $s_j$ and $\zeta_j$ remain unchanged.

By definition \eqref{eq:4.9} of ${\mathcal{E}'_\delta}^2$, we see how \eqref{eq:5.18} proves claim~(iii) for ${\mathcal{E}'_-}^2$.
Type~2, together with our previous observation \eqref{eq:5.12} proves claim~(iii) for ${\mathcal{E}'_-}^1$ and completes the proof of the proposition.
\end{proof}

\begin{proof}[\textbf{Proof of theorem 5.1.}]
It is sufficient to establish heteroclinic orbits $\mathcal{O} \leadsto v$ from the unique $i_{\mathcal{O}} = 3$ equilibrium to any other equilibrium $\mathcal{O} \neq v \in \mathcal{E}_f$.
By the Wolfrum lemma~\ref{lem:5.2} this is equivalent to showing that
	\begin{equation}
	\mathcal{O}, v \ \text{are} \
	z(v-\mathcal{O})\text{-adjacent}\,.
	\label{eq:5.19}
	\end{equation}
Note $i_v\leq 2$.
The relevant information on zero numbers $z$ is listed in proposition~\ref{prop:5.3}, for the decomposition
	\begin{equation}
	v\ \in \  \mathcal{E}_f \smallsetminus
	\lbrace \mathcal{O} \rbrace\  = \  
	\bigcup\limits_{j=0,1,2}
	\left( {\mathcal{E}'_-}^j \cup {\mathcal{E}'_+}^j\right)\,;
	\label{eq:5.20}
	\end{equation}
see \eqref{eq:4.9}, \eqref{eq:5.4}.
Let $v \in {\mathcal{E}'_\delta}^j$, $\delta=\pm$.
By proposition~\ref{prop:5.3}(iii) this is equivalent to $z(v-\mathcal{O})=j_\delta$.
To show $j$-adjacency of $\mathcal{O}, v$, as required by \eqref{eq:5.19}, we proceed indirectly.
Suppose there exists $w\in \mathcal{E}_f \smallsetminus \lbrace \mathcal{O},v\rbrace$ such that
	\begin{equation}
	z(w-\mathcal{O})=j_\delta = z(v-w)\,;
	\label{eq:5.21}
	\end{equation}
see \eqref{eq:5.3}.
Then the left equality and proposition~\ref{prop:5.3}(iii) imply $w\in {\mathcal{E}'_\delta}^j$.
Hence $v,w$ are both in ${\mathcal{E}'_\delta}^j$, and proposition~\ref{prop:5.3}(iv) implies
	\begin{equation}
	z(v-w) <j\,.
	\label{eq:5.22}
	\end{equation}		
This contradicts the right equality in \eqref{eq:5.21}, proves \eqref{eq:5.19}, establishes $\mathcal{O} \leadsto v$, and hence proves theorem~\ref{thm:5.1}.
\end{proof}

%%%%%%%%%%%%%%%%%%%%%%%%%%%%%%%%%%%%%%%%%%%%%%%%%%%%%%%%%%%

\section{Signed homeomorphisms for Sturm 3-balls}
\label{sec6}

In this section we prove theorems~\ref{thm:2.6} and~\ref{thm:2.7}.
Theorem~\ref{thm:2.6} establishes signed homeo\-morphisms $\Phi^s$ between abstract signed 3-cell templates $\mathcal{C}$ and the signed hemisphere decompositions of the Thom-Smale dynamic complex $\mathcal{C}_f^s$ of the associated Sturm global attractor $\mathcal{A}_f$.
Theorem~\ref{thm:1.2} is the unsigned corollary.

In theorem~\ref{thm:2.6} we pass from an abstract signed 3-cell template $\mathcal{C}^s$ of cells $c_v,\ v\in \mathcal{E}$, with a formally prescribed hemisphere decomposition $S_\pm^j(v)$, to a concrete signed Sturm complex $\mathcal{C}_f^s$ of unstable manifolds $W^u(v),\ v\in \mathcal{E}_f$, with hemisphere decomposition $\Sigma_\pm^j(v)$ such that the signed dynamic complex $\mathcal{C}_f^s = \mathcal{C}^s$ realizes the prescribed signed 3-cell template $\mathcal{C}^s$.
More precisely, we have to construct a cell preserving homeomorphism
	\begin{equation}
	\Phi^s: \quad\mathcal{C}^s \longrightarrow \mathcal{C}_f^s
	\label{eq:6.1}
	\end{equation}
such that the restrictions define bijections
	\begin{align}
	\Phi^s: &\quad\mathcal{E} \longrightarrow \mathcal{E}_f\,;
	\label{eq:6.2}\\
	\Phi^s: &\quad c_v \longrightarrow W^u\left(\Phi^s(v)\right)\,;
	\label{eq:6.3}\\
	\Phi^s: &\quad S_\delta^j(v)\longrightarrow
	\Sigma_\delta^j\left(\Phi^s(v)\right)\,;
	\label{eq:6.4}
	\end{align}
for all $v\in \mathcal{E}$ and $\delta = \pm$.
This is based on the specific construction of the SZS-pair of bijections
	\begin{equation}
	h_\iota :\quad\lbrace 1,\ldots ,N\rbrace \longrightarrow\mathcal{E}\,,
	\label{eq:6.5}
	\end{equation}
$\iota= 0,1$, which is associated to the signed 3-cell template $\mathcal{C}^s$ by definition~\ref{def:2.5}.
As a consequence,
	\begin{equation}
	\sigma:= h_0^{-1} \circ h_1
	\label{eq:6.6}
	\end{equation}
is associated to a 3-meander template.
See \cite[theorem~5.2]{firo3d-1}.
In theorem~\ref{thm:5.1} above we have established that any 3-meander template $\mathcal{M}, \sigma$ in fact defines, not just some Sturm attractor but, a Sturm 3-ball $\mathcal{A}_f$ via
	\begin{equation}
	\sigma_f := \sigma\,.
	\label{eq:6.7}
	\end{equation}
In particular $\mathcal{A}_f$ comes with boundary orders
	\begin{equation}
	h_\iota^f:\quad \lbrace 1,\ldots , N\rbrace \longrightarrow
	\mathcal{E}_f
	\label{eq:6.8}
	\end{equation}
of the equilibria $v(x)$ at $x=\iota=0,1$ and defines the Sturm 3-cell template $\mathcal{C}_f^s$.

Theorem~\ref{thm:2.7} then shows, conversely, that any two nonlinearities $f, g$ which satisfy \eqref{eq:6.1}--\eqref{eq:6.4} for respective signed homeomorphisms $\Phi_f^s, \Phi_g^s$, possess identical Sturm permutations
	\begin{equation}
	\sigma_f = \sigma_g\,.
	\label{eq:6.9}
	\end{equation}
In particular their global attractors $\mathcal{A}_f, \mathcal{A}_g$ are $C^0$ orbit-equivalent; see \cite{firo00}.
The homeomorphism
	\begin{equation}
	\Phi_g^s \circ \left( \Phi_f^s\right)^{-1}:
	\quad \mathcal{C}_f^s\longrightarrow \mathcal{C}_g^s
	\label{eq:6.10}
	\end{equation}
can be required to respect decompositions into fast unstable manifolds, as well.

\begin{proof}[\textbf{Proof of theorem 2.6.}]
We establish a signed homeomorphism $\Phi^s$: $\mathcal{C}^s \rightarrow \mathcal{C}_f^s$ as in \eqref{eq:6.1}--\eqref{eq:6.4}, by successive extension.
Our basic strategy is similar to the planar case discussed in section~\ref{sec3}; see in particular the proof of corollary~\ref{cor:3.2}.
As in \eqref{eq:3.4a} we start from the identical bijections
	\begin{equation}
	\Phi^s:= h_\iota^f \circ h_\iota^{-1}: \qquad 
	\mathcal{E} \longrightarrow\mathcal{E}_f\,,
	\label{eq:6.11}
	\end{equation}
for $\iota = 0,1$.
Indeed this map does not depend on $\iota$ because $h_0^{-1} \circ h_1 = \sigma = \sigma_f = (h_0^f)^{-1} \circ h_1^f$.
This proves claim \eqref{eq:6.2}.
To simplify notation we will use \eqref{eq:6.11} to identify barycenter vertices $v\in\mathcal{E}$ of the cells $c_v \in \mathcal{C}^s$, i.e. intersections of the meander $\mathcal{M}$ of $\sigma$ with the horizontal $h_1$-axis, with the equilibria $\Phi^s(v) \in \mathcal{E}_f$, i.e. with the corresponding intersection of $\mathcal{M}_f$ viewed as an equilibrium via the shooting curve of $f$.
In particular $\mathcal{E}_f = \mathcal{E}$ and
	\begin{equation}
	h_\iota^f = h_\iota\,.
	\label{eq:6.12}
	\end{equation}

In the remaining proof we will first invoke corollary~\ref{cor:3.2}(i), on planar Sturm attractors, to establish signed homeomorphisms between the two closed hemispheres
	\begin{equation}
	\Phi_\delta^s: \qquad
	\text{clos } S_\delta^2(\mathcal{O}) \longrightarrow
	\text{clos } \Sigma_\delta^2(\mathcal{O})\,,
	\label{eq:6.13}
	\end{equation}
for $\delta = \pm$.
We will then show how $\Phi_\pm^s$ can be assumed to coincide on the intersection meridian circle
	\begin{equation}
	S^1(\mathcal{O})= 
	\text{clos } S_+^2(\mathcal{O}) \cap
	\text{clos } S_-^2(\mathcal{O})\,.
	\label{eq:6.14}
	\end{equation}
In our final step we extend $\Phi_\pm^s$ to the interior of the unique 3-cell $c_\mathcal{O}$.

We have to show how $\Sigma_-^2(\mathcal{O}) = \mathbf{W}_f$ and $\Sigma_+^2(\mathcal{O}) = \mathbf{E}_f$, in the signed 3-cell template $\mathcal{C}_f^s$, coincide with the hemispheres $\mathbf{W}$ and $\mathbf{E}$ of the prescribed 3-cell template $\mathcal{C}^s$, respectively, via hemisphere homeomorphisms $\Phi_\pm^s$ as in \eqref{eq:6.12}.
We construct $\Phi_+^s$ for the closure $\text{clos } \mathbf{E}_f= \text{clos }\Sigma_+^2(\mathcal{O})$ of the eastern hemisphere by a West scoop; the East scoop for $\text{clos } \mathbf{W}_f= \text{clos }\Sigma_-^2(\mathcal{O})$ works analogously.
See definition~\ref{def:4.2}.
The construction of the signed homeomorphism
	\begin{equation}
	\Phi_+^s:\qquad
	\text{clos }\mathbf{E}= \text{clos }S_+^2(\mathcal{O}) \longrightarrow
	\text{clos }\Sigma_+^2(\mathcal{O}) =
	\text{clos }\mathbf{E}_f
	\label{eq:6.15}
	\end{equation}
for the planar Sturm attractor $\mathcal{A}_+$ of the scooped meander $\mathcal{M}_+, \sigma_+$ simply invokes corollary~\ref{cor:3.2}(i); see \eqref{eq:3.13} in particular.

This step requires to show the following claim.
Let $(h_0^+, h_1^+)$ be the ZS-pair of the complex
	\begin{equation}
	\mathcal{C}_+^s:=
	\text{clos }\mathbf{E} =
	\text{clos }S_+^2(\mathcal{O})\,,
	\label{eq:6.16}
	\end{equation}
viewed as a planar bipolar, and hence signed, complex.
Then the West scooped meander permutation $\sigma_+$ coincides with the Sturm permutation defined by $h_\iota^+$:
	\begin{equation}
	\sigma_+ = \left( h_0^+\right)^{-1} \circ h_1^+\,.
	\label{eq:6.17}
	\end{equation}
We will show this claim in lemma~\ref{lem:6.1} below.

In lemma~\ref{lem:6.2} we will then show how the signed Sturm dynamic complex $\mathcal{A}_+ = \mathcal{C}_{f_+}^s$ of $f_+$, with $\sigma_{f_+}= \sigma_+$, coincides with the restriction $(\mathcal{C}_f^s)_+$ of the signed Sturm dynamic complex $\mathcal{A} = \mathcal{C}_f^s$ to the closed hemisphere $\text{clos } \Sigma_+^2(\mathcal{O})$:
	\begin{equation}
	\mathcal{C}_{f_+}^s= ( \mathcal{C}_f^s)_+
	\label{eq:6.18}
	\end{equation}
Combined, \eqref{eq:6.16} and \eqref{eq:6.18} construct the homeomorphism \eqref{eq:6.14} on $\text{clos }\mathbf{E}$.

The construction for $\text{clos }\mathbf{W}$ is analogous, but might differ on the shared boundary meridian $S^1(\mathcal{O})$, see \eqref{eq:6.13}.
To remedy this point, let us recall the precise construction of the signed homeomorphisms $\Phi_\pm^s$ in the planar case.
By \eqref{eq:3.7} we first extend $\Phi_\pm^s$ to the 1-skeleta $\mathcal{C}_\pm^1$ before extending to faces.
The faces of $\mathcal{C}_\pm^s$ are disjoint.
On the shared boundary meridian $S^1(\mathcal{O})$, it is sufficient to construct $\Phi_+^s$ and then define $\Phi_-^s$:= $\Phi_+^s$, there.

This completes the construction of $\Phi^s$ as a signed homeomorphism on the 2-sphere
	\begin{equation}
	\Phi^s:\quad
	\begin{aligned}	
	S^2(\mathcal{O}) &= 
	\text{clos }S_-^2(\mathcal{O}) \cup
	\text{clos }S_+^2(\mathcal{O}) \longrightarrow \\
	\longrightarrow \Sigma^2(\mathcal{O}) &=
	\text{clos }\Sigma_-^2(\mathcal{O}) \cup
	\text{clos }\Sigma_+^2(\mathcal{O})\,.
	\end{aligned}
	\label{eq:6.19}
	\end{equation}
The radial extension to the respective interiors $c_\mathcal{O} \rightarrow W^u(\mathcal{O})$ is now obvious and completes the proof of theorem~\ref{thm:2.6}, up to the next two lemmas~\ref{lem:6.1} and \ref{lem:6.2}.
\end{proof}

\begin{lem}\label{lem:6.1}
Let $(h_0^+,h_1^+)$ be the ZS-pair of the planar signed complex $\mathcal{C}_+^s$ defined by the restriction of the 3-cell template $\mathcal{C}^s$ to the closed Eastern hemisphere $\text{clos } \mathbf{E} = \text{clos } S_+^2(\mathcal{O})$.
Let $\check{h}_\iota$ denote the West scooped paths $h_\iota$ of the SZS-pair $(h_0,h_1)$ for $\mathcal{C}^s$.

Then the paths $h_\iota^+$ and $\check{h}_\iota$ coincide,
	\begin{equation}
	\check{h}_\iota = h_\iota^+\,,
	\label{eq:6.20}
	\end{equation}
for $\iota=0,1$.
In particular, consider the Sturm permutation
	\begin{equation}
	\sigma_*:=\left( h_0^+\right)^{-1}\circ h_1^+
	\label{eq:6.21}
	\end{equation}
of the planar complex $\mathcal{C}_+^s$.
Then $\sigma_*$ coincides with the scooped meander permutation $\sigma_+$ of definition~\ref{def:4.2}, i.e.
	\begin{equation}
	\sigma_+ = \sigma_*\,,
	\label{eq:6.22}
	\end{equation}
as claimed in \eqref{eq:6.17}.

The analogous statements hold for the SZ-pair $(h_0^-, h_1^-)$ on $\text{clos } \mathbf{W} = \text{clos } S_-^2(\mathcal{O})$ and the East scooped paths $\check{h}_\iota$.
\end{lem}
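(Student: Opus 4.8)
The plan is to recognise the West scoop of the SZS-pair $(h_0,h_1)$ of $\mathcal{C}^s$ as nothing more than the operation of restricting each labeling bijection $h_\iota$ to the closed Eastern hemisphere, and then to read off $\check{h}_\iota=h_\iota^+$ from definition~\ref{def:2.5}(i) together with the uniqueness of the ZS-pair on a planar disk.

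First I would record that a single nose retraction is order preserving on the surviving vertices: by \eqref{eq:4.1} the two retracted vertices carry consecutive labels under both $h_0$ and $h_1$, so deleting them and renumbering leaves the relative $h_\iota$-order of every remaining vertex unchanged. Iterating along the successive retractions that realise the West scoop (feasible by the discussion following definition~\ref{def:4.2}, based on \eqref{eq:4.13} and the serpent overlap of definition~\ref{def:1.3}(ii)), the scooped paths $\check{h}_\iota$ are exactly the restrictions of range of $h_\iota$ to $\mathcal{E}\smallsetminus(\mathcal{O}\cup{\mathcal{E}'_-}^2)$. By \eqref{eq:4.9} and the identity $\mathcal{E}=\{\mathcal{O}\}\cup{\mathcal{E}'}^2$ from \eqref{eq:5.4}, this set equals $\text{clos }{\mathcal{E}'_+}^2={\mathcal{E}'_+}^2\cup{\mathcal{E}'}^1$.

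The decisive step is to identify $\text{clos }{\mathcal{E}'_+}^2$ with the underlying vertex set of $\text{clos }\mathbf{E}=\text{clos }S_+^2(\mathcal{O})$; equivalently, to show that the combinatorial set ${\mathcal{E}'_-}^2$ of \eqref{eq:4.9} consists precisely of the barycenters of the interior cells of the hemisphere $\mathbf{W}$. Here I would use proposition~\ref{prop:5.3}(iii) — which assigns signed zero number $2_-$ to every $v\in{\mathcal{E}'_-}^2$, hence, $z(v-\mathcal{O})$ being even, $v<\mathcal{O}$ in the $h_0$-order as well as in the $h_1$-order — together with the zero-number description of hemispheres in proposition~\ref{prop:2.2}(iii), the translation table \eqref{eq:1.15}, and the fact that the SZS-pair $(h_0,h_1)$ is built from the geometry of $\mathcal{C}^s$ so that its $h_\iota$-orders sort the interior $\mathbf{W}$-cells before $\mathcal{O}$ and the interior $\mathbf{E}$-cells after $\mathcal{O}$, the meridian cells making up ${\mathcal{E}'}^1$; see \cite[definition~5.1, theorem~5.2]{firo3d-1}. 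Since $\mathcal{E}$ is the disjoint union of $\{\mathcal{O}\}$, the interior $\mathbf{W}$-cells, the interior $\mathbf{E}$-cells, and ${\mathcal{E}'}^1$, this yields that ${\mathcal{E}'_-}^2$ is the set of barycenters of the interior cells of $\mathbf{W}$, and hence that $\text{clos }{\mathcal{E}'_+}^2$ is the vertex set of $\text{clos }\mathbf{E}$. I expect this identification to be the only genuinely delicate part of the proof; once it is in place, the remainder is bookkeeping.

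Granting it, $\check{h}_\iota$ is a pair of labeling bijections onto the vertex set of the planar bipolar disk $\mathcal{C}_+^s=\text{clos }\mathbf{E}$, carrying the orientation and signed structure inherited from $\mathcal{C}^s$ (the inherited orientation is bipolar on $\mathcal{C}_+^1$: it is acyclic, and at each non-polar meridian vertex the incoming meridian edge forbids a new boundary source while the outgoing one forbids a new boundary sink). Because the scoop deletes no edge barycenter on the boundary of any face of $\mathbf{E}$ and preserves order, $\check{h}_\iota$ inherits, on every face of $\mathbf{E}$, the ZS face-traversal rules that $(h_0,h_1)$ obeys on $\text{clos }\mathbf{E}$ by definition~\ref{def:2.5}(i), and it follows the bipolar order of $\mathcal{C}_+^1$. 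Thus $\check{h}_\iota$ is a ZS-pair of $\mathcal{C}_+^s$; as $\mathcal{C}_+^s$ is a bipolar topological disk its ZS-pair is unique (compare theorem~\ref{thm:3.1} and \cite{firo09}), whence $\check{h}_\iota=h_\iota^+$ for $\iota=0,1$ — that is \eqref{eq:6.20} — and
\[
\sigma_+=\check{h}_0^{-1}\circ\check{h}_1=(h_0^+)^{-1}\circ h_1^+=\sigma_*,
\]
which is \eqref{eq:6.22}. The East scoop on $\text{clos }\mathbf{W}$ is treated identically: it removes $\mathcal{O}\cup{\mathcal{E}'_+}^2$, its survivor set is the vertex set of $\text{clos }\mathbf{W}$, and definition~\ref{def:2.5}(i) identifies the survivors with the unique SZ-pair $(h_0^-,h_1^-)$ of $\mathcal{C}_-^s$.
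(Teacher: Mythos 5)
Your proposal is correct and follows essentially the same route as the paper: both identify the West-scooped paths $\check{h}_\iota$ with the restrictions of range of the SZS-pair $(h_0,h_1)$ to the vertex set of $\text{clos}\,\mathbf{E}$, observe that these restrictions obey the ZS rules there by the very construction of the SZS-pair, and conclude by uniqueness of the ZS-pair on a planar bipolar disk. The only cosmetic differences are that the paper matches the initial segments via fullness of the $\mathbf{N}$-polar serpents (\cite[lemma~2.7]{firo3d-1}) where you verify the ZS rules on the whole restricted path directly, and that your detour through propositions~\ref{prop:5.3}(iii) and \ref{prop:2.2}(iii) is unnecessary (and should be avoided, since the identification $\Sigma^2_\pm(\mathcal{O})=S^2_\pm(\mathcal{O})$ is only established later): the sorting of interior $\mathbf{W}$-cells before $\mathcal{O}$ and interior $\mathbf{E}$-cells after, built into the SZS-pair, already yields your survivor-set identification.
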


\begin{proof}[\textbf{Proof.}]
Since the ZS-pair $(h_0^+, h_1^+)$ is unique, we only have to show that the West scooped pair $(\check{h}_0, \check{h}_1)$ of definition~\ref{def:4.2}, \eqref{eq:4.11} forms a ZS-pair in the closed hemisphere $\text{clos }\mathbf{E}$, according to definition~\ref{def:1.1}.
Let $(h_0, h_1)$ denote the original SZS-pair of the 3-cell template $\mathcal{C}^s$, prior to the West scoop.
By construction, the Hamiltonian paths $h_\iota$ form a ZS-pair in $\text{clos }\mathbf{E}$, from their respective first emergence vertex $w_+^\iota \in \mathbf{E}$ onwards.
Before, $h_0$ and $h_1$ follow the meridians $\mathbf{N} \cup \mathbf{EW}$ and $\mathbf{N} \cup \mathbf{WE}$, respectively, in bipolar order and with interspersed excursions into $\mathbf{W}$.
See figs.~\ref{fig:1.4} and \ref{fig:2.2}.
Omitting precisely these Western excursions, in the scooped pair $(\check{h}_0, \check{h}_1)$, generates the full $\mathbf{N}$-polar serpents
	\begin{equation}
	\begin{aligned}
	\check{h}_0 &:\qquad
	\mathbf{N}= v_-^0v_-^1\ldots v_-^{2m-1} w_+^0\ldots\\
	\check{h}_1 &:\qquad
	\mathbf{N} = v_+^{2n} v_+^{2n-1} \ldots v_+^1 w_+^1\ldots
	\end{aligned}
	\label{eq:6.23}
	\end{equation}

By \cite[lemma~2.7]{firo3d-1}, the $\mathbf{N}$-polar serpents $h_\iota^+$ of the ZS-pair $(h_0^+, h_1^+)$ in the East hemisphere $\text{clos }\mathbf{E}$ are also full.
Hence the scooped paths $\check{h}_\iota$ and the ZS paths $h_\iota$ coincide everywhere, $\check{h}_\iota = h_\iota$, as claimed in \eqref{eq:6.20}.
Indeed these paths coincide, both, in their initial $\mathbf{N}$-polar serpent parts before $w_+^\iota$, and from $w_+^\iota$ onwards, for $\iota=0,1$.
Since $\sigma_+ = \check{h}_0^{-1} \circ \check{h}_1$, by definition, \eqref{eq:6.20} proves \eqref{eq:6.22} and the lemma.
\end{proof}

\begin{lem}\label{lem:6.2}
As claimed in \eqref{eq:6.18}, the signed Sturm dynamic complex $\mathcal{A}_+= \mathcal{C}_{f_+}^s$ of the West scoop $\sigma_{f_+} = \sigma_+$ of $\sigma_f =\sigma$ coincides with the restriction $(\mathcal{C}_f^s)_+$ of the signed Sturm dynamic complex $\mathcal{C}_f^s$ to the closed Eastern hemisphere $\text{clos }\Sigma_+^s(\mathcal{O})$.

The analogous statement holds for $\mathcal{A}_-= \mathcal{C}_{f_-}^s$ of the East scoop $\sigma_{f_-} = \sigma_-$ and the Western restriction $(\mathcal{C}_f^s)_- = \text{clos }\Sigma_-^s(\mathcal{O})$.
\end{lem}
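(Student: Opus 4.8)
The plan is to prove Lemma~\ref{lem:6.2} by showing that the two signed complexes $\mathcal{C}_{f_+}^s$ and $(\mathcal{C}_f^s)_+$ carry the very same data at every level — the same cells, the same Morse indices, the same face incidences, and the same signed zero numbers — and then to assemble the cell-preserving signed homeomorphism between them by the skeleton-by-skeleton radial extension already used in Section~\ref{sec3}, starting from the identity on vertices.

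First I would pin down the vertex sets. By definition~\ref{def:4.2} and \eqref{eq:4.9} the West scoop removes precisely $\mathcal{O}$ and ${\mathcal{E}'_-}^2$, so the equilibrium set of $\mathcal{A}_+$ is $\text{clos }{\mathcal{E}'_+}^2 = \mathcal{E}_f \smallsetminus (\lbrace\mathcal{O}\rbrace \cup {\mathcal{E}'_-}^2)$. On the other side, Theorem~\ref{thm:5.1} makes $\mathcal{A}_f$ a Sturm $3$-ball, and Proposition~\ref{prop:5.3}(ii),(iii) shows that ${\mathcal{E}'_\delta}^j$ has exactly the defining signed-zero-number properties of $\mathcal{E}_f \cap \Sigma_\delta^j(\mathcal{O})$; hence ${\mathcal{E}'_-}^2 = \mathcal{E}_-^2$ and $\text{clos }{\mathcal{E}'_+}^2 = \mathcal{E}_f \cap \text{clos }\Sigma_+^2(\mathcal{O})$, which is exactly the vertex set of the Eastern restriction $(\mathcal{C}_f^s)_+$. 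Since the West scoop is a finite composition of nose retractions, Proposition~\ref{prop:4.1} then gives, for all $v,w$ in this common vertex set, $i_{f_+}(v) = i_f(v)$ and $z_{f_+}(w-v) = z_f(w-v)$; in particular the $h_0$-order on these vertices inherited from $\check{h}_0$ agrees with the one from $h_0^f$, so (as in \eqref{eq:3.4a}) we may identify $\mathcal{E}_{f_+}$ with $\mathcal{E}_f \cap \text{clos }\Sigma_+^2(\mathcal{O})$ and write $h_\iota^{f_+} = \check{h}_\iota$.

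The heart of the proof is to match the face incidences, i.e. $v \leadsto_{f_+} w \Longleftrightarrow v \leadsto_f w$ for $v,w$ in the common vertex set with $i(v) = i(w)+1$; by \eqref{eq:2.4} this is the equality $\mathcal{C}_{f_+} = (\mathcal{C}_f)_+$ of cell complexes. By the Wolfrum lemma~\ref{lem:5.2}, $v \leadsto w$ amounts to $i(v) > i(w)$ together with $z(w-v)$-adjacency, and the Morse and signed zero numbers are already matched. Since $\mathcal{E}_{f_+} \subseteq \mathcal{E}_f$ and the $x=0$ order is inherited, any equilibrium blocking a connection in $\mathcal{A}_{f_+}$ also blocks it in $\mathcal{A}_f$, so $v \leadsto_f w \ \Rightarrow\ v \leadsto_{f_+} w$ is immediate. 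For the converse one must rule out that the scoop creates a spurious connection inside the surviving hemisphere, i.e. one must show that no $u \in \lbrace\mathcal{O}\rbrace \cup {\mathcal{E}'_-}^2$ can $k$-block a pair $v,w \in \text{clos }{\mathcal{E}'_+}^2$, where $k = z(w-v) \leq 1$ by Proposition~\ref{prop:5.3}(iv). For $u = \mathcal{O}$ the $k$-blocking relation would force $z(\mathcal{O}-v) = k \leq 1$, against Proposition~\ref{prop:5.3}(iii) (which gives $z(v-\mathcal{O}) = 2$ whenever $i(v) = 2$, and $z(v-\mathcal{O}) = 1$ whenever $v \in {\mathcal{E}'_\pm}^1$, the remaining case being reduced using that the edge $c_v$ of a saddle $v \in {\mathcal{E}'_\pm}^1$ has both endpoints on the same meridian, whence $z(w-v) = 0$ by Proposition~\ref{prop:5.3}(iv)). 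For $u \in {\mathcal{E}'_-}^2$ one uses $z(u-\mathcal{O}) = 2_-$, hence $u(0) < \mathcal{O}(0)$, together with the sign information carried by $z(v-\mathcal{O})$ and $z(w-\mathcal{O})$ and the requirement that $u$ lie strictly between $v$ and $w$ at $x=0$, to contradict $z(u-v) = k = z(w-u)$ with matching signs. This blocker analysis is the step I expect to be the main obstacle: it is essentially the only place where the $3$-meander template hypotheses enter, through Proposition~\ref{prop:5.3} and the meridian structure of ${\mathcal{E}'}^1$, and it needs a careful case split on where $v,w$ sit relative to the two hemispheres and on the $x=0$ ordering of $v,u,w$.

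With cells, Morse indices, incidences and signed zero numbers all matched, the signed hemisphere decomposition is matched as well: by \eqref{eq:1.13}, \eqref{eq:2.10a}, $\Sigma_\delta^j(v)$ is just the union of the cells $c_w \subseteq \partial c_v$ with $z(w-v) = j_\delta$, so it is the same data in $\mathcal{C}_{f_+}^s$ and in $(\mathcal{C}_f^s)_+$. One then constructs $\Phi^s$ by induction over the skeleta exactly as in Section~\ref{sec3}: the identity on vertices, radially extended over each edge and then each face, the signs being respected because $\check{h}_\iota = h_\iota^{f_+}$ places the bipolar extrema $S_\pm^0(v)$ and the one-hemisphere boundaries $S_\pm^1(v)$ of each face in agreement with $\Sigma_\pm^0$ and $\Sigma_\pm^1$. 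Finally, the statement for the East scoop follows by applying the trivial equivalence $u \mapsto -u$, which interchanges the two hemispheres $\Sigma_\pm^2(\mathcal{O})$ and the two scoops, completing the proof.
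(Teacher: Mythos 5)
Your route is genuinely different from the paper's: the paper never touches individual heteroclinic connections here, but instead observes (lemma~\ref{lem:6.1}) that the West-scooped paths $\check{h}_\iota$ coincide with the ZS-pair of the closed Eastern hemisphere viewed as a planar bipolar complex, and then quotes the planar design result, corollary~\ref{cor:3.2}(i), which already packages the entire connection-graph and signed-hemisphere matching for planar Sturm attractors. You instead try to match the connection graphs directly via Wolfrum's lemma~\ref{lem:5.2}, and that is where the proposal breaks down. The step you yourself flag as the main obstacle is not merely unfinished; the statement you propose to prove there is false. You reduce the converse inclusion to: \emph{no} $u\in\lbrace\mathcal{O}\rbrace\cup{\mathcal{E}'_-}^2$ can $k$-block a pair $v,w\in\text{clos }{\mathcal{E}'_+}^2$. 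Take any saddle $v\in{\mathcal{E}'_-}^1$ on the meridian $\mathbf{EW}$ and any interior sink $w\in{\mathcal{E}'_+}^1$ on the meridian $\mathbf{WE}$ (both exist, e.g., in the octahedron of fig.~\ref{fig:1.1}, and both lie in $\text{clos }{\mathcal{E}'_+}^2$, with $i(v)=1>0=i(w)$). By proposition~\ref{prop:5.3}(iii), $v-\mathcal{O}$ is negative at $x=0$ with exactly one sign change, and $w-\mathcal{O}$ is positive at $x=0$ with exactly one sign change; hence $w-v$ is positive at $x=0$, negative at $x=1$, and $z(w-v)=1_+$ by proposition~\ref{prop:2.2}(iv). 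Then $v<_0\mathcal{O}<_0 w$ and $z(\mathcal{O}-v)=1_+=z(w-\mathcal{O})$, so the removed equilibrium $\mathcal{O}$ does $1$-block this pair.

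What your argument actually needs is the weaker claim that every pair blocked by a removed equilibrium is \emph{also} blocked by a surviving one, equivalently that pairs which are $z(w-v)$-adjacent within $\mathcal{E}_{f_+}$ stay adjacent within $\mathcal{E}_f$. That cannot be settled by zero-number arithmetic on $v,w,u$ alone (the example above satisfies all your sign constraints), but requires knowing which pairs are adjacent in the planar attractor $\mathcal{A}_+$ — which is precisely the planar connection-graph information that corollary~\ref{cor:3.2} encapsulates and that the paper invokes instead. This is also exactly the danger the paper warns about after proposition~\ref{prop:4.1}: nose retractions preserve Morse and zero numbers but may well change the remaining heteroclinic connections. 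The surrounding scaffolding of your proposal (vertex identification via proposition~\ref{prop:5.3} and \eqref{eq:5.6}, preservation of data via proposition~\ref{prop:4.1}, reconstruction of the signed homeomorphism from matched face posets and signed zero numbers, and the trivial equivalence $u\mapsto -u$ for the East scoop) is sound, but the central step is a genuine gap.
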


\begin{proof}[\textbf{Proof.}]
Consider the Eastern restriction $(\mathcal{C}_f^s)_+$ as a given abstract planar signed complex,
	\begin{equation}
	\mathcal{C}_+^s := (\mathcal{C}_f^s)_+\,.
	\label{eq:6.24}
	\end{equation}
We then have to show that the planar signed Sturm complex $\mathcal{A}_+ = \mathcal{A}_{f_+}= \mathcal{C}_{f_+}^s$ of $f_+$ with $\sigma_{f_+}=\sigma_+$ coincides with the abstract planar complex $\mathcal{C}_+^s$:
	\begin{equation}
	\mathcal{C}_{f_+}^s = \mathcal{C}_+^2\,.
	\label{eq:6.25}
	\end{equation}
But in lemma~\ref{lem:6.1} we have already observed how the defining scoop paths $(\check{h}_0, \check{h}_1)$ of $\sigma_{f_+} = \sigma_+$ coincide with the ZS-pair $(h_0^+, h_1^+)$ of the prescribed planar complex $\mathcal{C}_+^s$.
Therefore corollary~\ref{cor:3.2}(i), \eqref{eq:3.13} proves claim \eqref{eq:6.25} and the lemma.
\end{proof}

With the above two lemmas, the proof of theorem~\ref{thm:2.6} is now also complete.

\begin{proof}[\textbf{Proof of theorem 2.7.}]
By assumptions \eqref{eq:homsfg}, \eqref{eq:2.25} we have a signed  homeomorphism $\Phi^s$ which identifies the signed versions $\mathcal{C}_f^s,\ \mathcal{C}_g^s$ of two Sturm 3-ball dynamic complexes $\mathcal{C}_f,\ \mathcal{C}_g$.
In short,
	\begin{equation}
	\mathcal{C}_f^s = \mathcal{C}_g^s\,.
	\label{eq:6.26}
	\end{equation}
We have to show that the Sturm permutations $\sigma_f$ and $\sigma_g$ coincide; see \eqref{eq:2.26}.
Moreover, we have to show how $\Phi^s$ can be chosen to preserve the fast unstable manifolds; see \eqref{eq:2.27}.

To show the first claim, $\sigma_f = \sigma_g$, we only have to show that the boundary orders $h_\iota^f,\ h_\iota^g$ of the equilibria in $\mathcal{E}_f,\ \mathcal{E}_g$ at $x=\iota=0,1$ coincide.
Identifying $\mathcal{E}_f,\ \mathcal{E}_g$ via $\Phi^s$, we can write this claim as
	\begin{equation}
	h_\iota^f = h_\iota^g\,,
	\label{eq:6.27}
	\end{equation}
for $\iota = 0,1$.
Indeed \eqref{eq:6.27} implies \eqref{eq:2.26} by
	\begin{equation}
	\sigma_f = (h_0^f)^{-1} \circ h_1^f=
	(h_0^g)^{-1} \circ h_1^g = \sigma_g\,.
	\label{eq:6.28}
	\end{equation}

To prove claim \eqref{eq:6.27} we invoke proposition~\ref{prop:2.3}.
The signed homeomorphism $\Phi^s$: $\mathcal{C}_f^s \rightarrow\mathcal{C}_g^s$ identifies the equilibria $\mathcal{E}_f$ with $\mathcal{E}_g$, and all $\mathcal{E}_{f,\pm}^j(v) = \mathcal{E}_f \cap \Sigma_{f,\pm}^j(v)$ with their counterparts $\mathcal{E}_{g,\pm}^j(v) = \mathcal{E}_g \cap \Sigma_{g,\pm}^j(v)$.
In particular, $\Phi^s$ identifies all $f$-equilibria $w_f(\mathbf{s})$ with their $g$-counterparts $w_g(\mathbf{s})$, for identical sign sequences $\mathbf{s}= s_0 \ldots s_{n-1},\ s_k\in \lbrace \pm \rbrace$.
By the table of proposition~\ref{prop:2.3}, this shows that the boundary orders $h_\iota^f ,\ h_\iota^g$ of the respective equilibria coincide, as claimed in \eqref{eq:6.27}.

We show next how the signed homeomorphism $\Phi^s$ can be chosen to respect fast unstable manifolds $W^{j+1}(v)$, as claimed in \eqref{eq:2.27}.
Let $\Phi_f^s $: $\mathcal{C}^s \rightarrow \mathcal{C}_f^s$ denote the signed homeomorphism which describes $\mathcal{C}_f^s$ as an abstract 3-cell template $\mathcal{C}^s= \mathcal{C}_f^s = \mathcal{C}_g^s$.
See \eqref{eq:6.1}.
We only have to recall how $\Phi_f^s$ was constructed by ascending dimensions $i_v= i(v)$ of Thom-Smale cells $c_v \rightarrow W^u(v)$.
On the closed ball $\overline{c}_v$ with barycenter $v$ we extended $\Phi_f^s$ radially inwards from the boundary,
	\begin{equation}
	\Phi_f^s: \quad
	\partial c_v \longrightarrow \partial W_f^u(v) =
	\Sigma_f^{i(v)-1} (v)\,.
	\label{eq:6.29}
	\end{equation}
The fast unstable manifolds $W_f^{j+1}$, likewise, possess sphere boundaries and, by induction on cell dimension, we may assume
	\begin{equation}
	\Phi_f^s: \quad
	S^j(v) \longrightarrow \partial W_f^{j+1}(v) =
	\Sigma_f^j(v)\,,
	\label{eq:6.30}
	\end{equation}
for $0\leq j< i(v)$.
Since $\Phi_f^s$ is a signed homeomorphism, and passing to the notation of signed hemispheres, we have,
	\begin{equation}
	\Phi_f^s: \quad
	S_\delta^j(v) \longrightarrow \Sigma_\delta^j(v)\,,
	\label{eq:6.31}
	\end{equation}
for $\delta = \pm$.
The Schoenflies result \cite{firo13} provided extensions of \eqref{eq:6.31}, to the interior balls $B^{j+1}(v)$, such that the standard eigenspaces $E^{j+1}$ mapped to $W_f^{j+1}(v)$.
Similarly, positive and negative half spaces are mapped to the signed versions of $W_f^{j+1}(v)$, separated by $W_f^j(v)$, for $0\leq j < i(v)$.
Replacing radial extensions by this more refined construction of $\Phi_f^s$ we see how standard (half) eigenspaces just get mapped to (signed) fast unstable manifolds.
Since the same statement holds for $\Phi_g^s$: $\mathcal{C}^s\rightarrow \mathcal{C}_g^s$, on the same 3-cell template complex $\mathcal{C}^s$, the combined signed homeomorphism
	\begin{equation}
	\Phi^s = \Phi_g^s \circ \left(\Phi_f^s\right)^{-1}:\quad
	\mathcal{C}_f^s \longrightarrow \mathcal{C}_g^s
	\label{eq:6.32}
	\end{equation}
respects signed fast unstable manifolds.
This completes the proof of claim \eqref{eq:2.27}, and the proof of theorem~\ref{thm:2.7}.
\end{proof}

%%%%%%%%%%%%%%%%%%%%%%%%%%%%%%%%%%%%%%%%%%%%%%%%%%%%%%%%%%%

\section{Appendix: Wolfrum's lemma}\label{sec7}
In this technical appendix we comment on, and repair, a gap in the original proof of Wolfrum's lemma~\ref{lem:5.2}.

In \cite[theorem~2.1]{wo02} the lemma has first been formulated in the present form.
The gap in the proof arises, formally, by an overinterpretation of realization results in \cite{firo99} to provide templates for arbitrary sequences of saddle-node bifurcations. This is not what had been proved there.
The relevant result is \cite[lemma~3.1]{firo99}.
Already in the simplest case it is based, first, on a ``short arc'' \emph{nose retraction}, via a saddle-node bifurcation. Second, the resulting nose in the meander $\mathcal{M}$ has to be retracted \emph{counterclockwise} towards the lower, reduced, number of equilibria. See \cite[fig.~3]{firo99}. This brings the relevant Sturm shooting meanders $\mathcal{M}$ into canonical form, as specified in \cite{firo99}. The counterclockwise restriction in the second step has not been addressed in \cite{wo02}.

In fact, the results in \cite{firo99} do allow a nose removal by a saddle-node bifurcation which pushes its ``short arc'' of $\mathcal{M}$ nearly vertically through the horizontal axis. This addresses the first step, locally. Neither before, nor after, such a local sadlle-node bifurcation, however, would the resulting meander be in canonical form, globally. 

Therefore it remains crucial to lift the clockwise restriction in
the second step, towards canonical meanders. We use the global rigidity of Sturm attractors proved in \cite{firo00}:
global Sturm attractors $\mathcal{A}_f$ and $\mathcal{A}_g$ with identical Sturm permutations $\sigma_f = \sigma_g$ are $C^0$ orbit equivalent.
In view of that global rigidity, the Sturm permutations on either side of the local saddle-node bifurcation can therefore be realized by shooting curves, again, which are canonical meanders.
As a caveat we add that it is still unknown to us whether that second step can be achieved by a global parameter homotopy of Sturm nonlinearities $f$, within the PDE class \eqref{eq:1.1}.
Instead, the rigidity proof in \cite{firo00} used a discretization, and subsequent dimensional augmentation, to provide parameter homotopies in the potentially much wider ODE class of finite-dimensional Jacobi systems.
At any rate, this remedies both gaps in the proof of \cite[theorem~2.1]{wo02}.

The proof of Wolfrum's lemma is independent of a Conley index argument in \cite{firo96} which led to a weaker result.
See \cite[remark~4.1]{wo02}.
Above we have indicated how arguments of \cite{firo99, firo00} enter, instead.

%%%%%%%%%%%%%%%%%%%%%%%%%%%%%%%%%%%%%%%%%%%%%%%%%%%%%%%%%%%
{\small

}
\end{document}